\newcommand\ZZ{\mathbb{Z}} 
\newcommand\QQ{\mathbb{Q}} 
\newcommand\RR{\mathbb{R}} 
\newcommand\CC{\mathbb{C}} 
\newcommand\FP{\mathbb{F}_p}
\newcommand\fleche{\longrightarrow} 
\DeclareMathOperator{\Ker}{Ker}
\DeclareMathOperator{\tr}{tr}
\DeclareMathOperator{\Ht}{ht}
\DeclareMathOperator{\Hom}{Hom}
\DeclareMathOperator{\Spec}{Spec}
\DeclareMathOperator{\Spf}{Spf}
\DeclareMathOperator{\Vect}{Vect}
\DeclareMathOperator{\id}{id}
\DeclareMathOperator{\Gal}{Gal}
\DeclareMathOperator{\Lie}{Lie}
\DeclareMathOperator{\Hdg}{Hdg}
\DeclareMathOperator{\Newt}{\mathcal N\!ewt}
\DeclareMathOperator{\End}{End}
\DeclareMathOperator{\PR}{PR}
\DeclareMathOperator{\GL}{GL}
\DeclareMathOperator{\Char}{Char}
\DeclareMathOperator{\Diff}{Diff}
\theoremstyle{definition} 
\newtheorem{defin}{Definition}[section]
\newtheorem{hypothese}[defin]{Hypothesis}
\theoremstyle{plain} 
\newtheorem{theor}[defin]{Theorem}
\newtheorem{lemm}[defin]{Lemma}  
\newtheorem{prop}[defin]{Proposition}
\newtheorem{cor}[defin]{Corollary}
\theoremstyle{remark} 
\newtheorem{rema}[defin]{Remark}
\newtheorem{example}[defin]{Example}
\begin{document}

\title{On the geometry of the Pappas-Rapoport models for PEL Shimura varieties}
\author{St\'ephane Bijakowski}

\address{Centre de mathématiques Laurent Schwartz, École Polytechnique, 91128 Palaiseau, FRANCE
}

\email{stephane.bijakowski@polytechnique.edu}

\author{Valentin Hernandez}

\address{Laboratoire Mathématiques d'Orsay, Bâtiment 307,
Université Paris-Sud, 91405 Orsay, FRANCE
}

\email{valentin.hernandez@math.crns.fr}\date{}
\maketitle

\section{Introduction}
Shimura varieties have been at the heart of arithmetic since their introduction by Goro Shimura, later generalized by Pierre Deligne \cite{DeShimura}. Nowadays they are a powerful 
geometric tool for the Langlands program. As algebraic varieties over a number field $E$, their \'etale cohomology in endowed both with an action of $G_E = \Gal(\overline E/E)$ and of 
the adelic points of the underlying reductive group $G$ : understanding the relations between the two actions is the way to realize \textit{geometrically} (cases of) the association of Galois 
representations to automorphic representations. This strategy was first realized by Eichler-Shimura and Deligne for the modular curves, and was later generalised in broader directions, for 
higher dimensional Shimura varieties (\cite{LRZ}, \cite{KotJams}, \cite{HT}, \cite{Shin},\cite{ParisBook}...) where the previous arithmetic and analytic relations has revealed very 
complex issues.

One of the ideas to realize this correspondance is the Langlands-Kottwitz method, for which we need to relate the number of points (modulo $p$) of our Shimura variety (itself related to the \'etale cohomology of the variety), to some orbital integrals of $G$, itself related in a somewhat indirect way, but now classical, to automorphic representations. Thus, to make sense of the number of points, we need to find a way to reduce the given Shimura variety modulo $p$, i.e. we need to find a \textit{good} integral model of it. When the Shimura Variety is of P.E.L. type, meaning more or less that it is a moduli space of abelian varieties with some extra structure, the simplest idea is to extend this modular description from $E$ to $\mathcal O_E$, or at least to $\mathcal O_{E_\mathfrak p}$, a $p$-adic completion of $\mathcal O_E$. In the first case of the modular curve, this has been extensively studied, for example in \cite{DR} or \cite{KM}, in which very satisfying integral models are introduced, for all the interesting levels at $p$, $\Gamma_0(p^n),\Gamma_1(p^n),\Gamma(p^n)...$. A remark regarding the definition of integral models is that the level away from $p$ is easy to deal with. Also, by \textit{satisfying} integral models here we mean with as little singularities as possible. For example when the level at $p$ is maximal, the integral model of the modular curves is smooth, and in general level they are regular. Kottwitz then deeply generalised this in the case of P.E.L Shimura varieties, provided that the Shimura datum was \textit{unramified} at $p$, meaning that both the group is unramified at $p$ (and has a suitable integral model), and the level is hyperspecial at $p$. 

The problem of defining good integral models both with deeper level at $p$, or for ramified Shimura datum, has since been deeply studied. For a selection we mention the work of 
Harris-Taylor \cite{HT} which study specific Shimura varieties for which the method of Katz-Mazur still applies for deeper level, work of Pappas-Rapoport \cite{P-R} for cases where the 
Shimura datum is ramified, and almost any paper of Lan (for example \cite{Lan,LanSigma,LanSplitting}) for generalisation in both directions.
In this article we study a specific class of P.E.L. integral models, with "maximal" level at $p$ (in a specific sense), and for which the group is ramified at $p$. We take the definition given in \cite{P-R}, also referred to as \textit{splitting models} in the literature, and study the local and global geometry of these models. Our results depend on the ramification of $p$ on the Shimura datum. Precisely, as explained in section \ref{sectdecpol}, there is a finite set $\mathcal P$ of "primes" $\pi$ above $p$. These primes fall in one of the following categories : (C),(AL),(AU),(AR), where the first one is the category of primes of symplectic type, and the last three are of unitary type.\footnote{We exclude all type D factors in our P.E.L. Shimura varieties} The last category (AR) roughly corresponds to a unitary group over a CM extension $F/F^+$, and a prime $\pi$ above $p$ in $F^+$ such that $\pi$ ramifies in $F$. Denote 
$X$ the \textit{Pappas-Rapoport model} at $p$ of a Shimura variety as in section \ref{sect2}. It lives over the ring of integers of $K$, a finite, well chosen, extension of $\QQ_p$. 
Our first result is the following (see Theorem \ref{thr229})

\begin{theor}
If no prime in $\mathcal P$ falls in case (AR), then $X$  is smooth over $\Spec(\mathcal O_{K})$.
\end{theor}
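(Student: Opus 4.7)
The plan is to reduce smoothness of $X$ to smoothness of an associated local model, decompose that local model according to the primes in $\mathcal P$, and then check smoothness factor by factor. Since types (C), (AL), (AU) all give factors which are (products or towers of) Grassmannians, they are smooth; the geometric difficulty of ramified unitary places is exactly what case (AR) captures, which is excluded by hypothesis.

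First, I would set up a local model diagram in the style of Rapoport--Zink: a correspondence
\[ X \longleftarrow \widetilde X \longrightarrow M^{\mathrm{loc}} \]
whose two arrows are smooth and surjective with the same relative dimension. The source $\widetilde X$ is the moduli space over $X$ parameterizing a trivialization of the relevant de Rham bundle together with its Pappas--Rapoport filtration, and $M^{\mathrm{loc}}$ is the scheme of filtrations of the standard lattice satisfying the PR signature conditions, both with their $\mathcal O_F \otimes \mathcal O_K$-actions. This reduces the theorem to proving that $M^{\mathrm{loc}}$ is smooth over $\Spec(\mathcal O_K)$.

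Next, using the product decomposition of $\mathcal O_F \otimes_{\ZZ} \ZZ_p$ into factors indexed by primes of $F$ above $p$, and the regrouping by $\pi \in \mathcal P$ explained in Section \ref{sectdecpol}, the local model splits as a product
\[ M^{\mathrm{loc}} \;=\; \prod_{\pi \in \mathcal P} M^{\mathrm{loc}}_\pi. \]
It then suffices to show that each factor $M^{\mathrm{loc}}_\pi$ is smooth over $\mathcal O_K$. Under our assumption, each $\pi$ falls in (C), (AL), or (AU). In these cases the PR datum at $\pi$ parameterizes, for each embedding of the residue field of $\pi$ lifting to characteristic zero, a step $\Fil^i \subset \Fil^{i+1}$ of locally free $\mathcal O_K$-modules with graded pieces of prescribed rank; crucially, the polarization either does not couple distinct embeddings (types (AL), (AU)) or couples an embedding to itself in a way compatible with a symplectic splitting (type (C)). Consequently $M^{\mathrm{loc}}_\pi$ is an iterated Grassmannian bundle over $\Spec(\mathcal O_K)$, hence smooth; products of smooth schemes being smooth, $M^{\mathrm{loc}}$ itself is smooth.

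The main obstacle is the explicit identification of $M^{\mathrm{loc}}_\pi$ in each of the three admissible cases, and in particular checking that the PR filtration absorbs the ramification of $\pi$ in the base field $F$ (or $F^+$) without introducing a "wedge" or determinantal condition. This is exactly the obstruction that appears in case (AR): there, the Hermitian form pairs two embeddings whose uniformizers coincide modulo $p$, forcing a nontrivial compatibility on the paired graded pieces of the filtration and producing the singular naive local model of a ramified unitary group. Since (AR) is excluded, no such condition arises, every factor is a smooth Grassmannian tower, and the theorem follows from the local model diagram.
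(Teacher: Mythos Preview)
Your approach via the local model diagram is a valid alternative, and the paper itself remarks (immediately after its proof) that one could argue this way in the spirit of \cite{P-R}. The paper instead proves formal smoothness directly: given a square-zero thickening $S \twoheadrightarrow R$, it uses Serre--Tate to reduce to lifting the $p$-divisible group with its extra structure, then Grothendieck--Messing to reduce to lifting the Hodge filtration $\omega_\tau^{[1]} \subset \dots \subset \omega_\tau^{[e]}$ inside the crystalline $\mathcal H_\tau \otimes_R S$, and performs this lift one step at a time. The inductive step amounts to choosing a point in a (Lagrangian) Grassmannian over $S$ lifting a given point over $R$. So the two arguments share the same core computation; yours packages it as smoothness of $M^{\mathrm{loc}}$, the paper's as formal smoothness of $X$.

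That said, two points in your sketch need sharpening. First, your description of case (AU) is not quite right: the polarization \emph{does} couple the distinct embeddings $\tau$ and $\overline\tau$, via a perfect pairing $\mathcal H_\tau \times \mathcal H_{\overline\tau} \to \mathcal O_S$. The reason this causes no trouble is that the PR filtration at $\overline\tau$ is then \emph{determined} by the one at $\tau$ (as its orthogonal), so one only chooses filtrations for half the embeddings and these choices are unconstrained Grassmannians. Second, and more seriously, in case (C) the factor $M^{\mathrm{loc}}_\pi$ is \emph{not} an iterated ordinary Grassmannian bundle: at each step $\omega_\tau^{[\ell]} \mapsto \omega_\tau^{[\ell+1]}$ one must choose a \emph{Lagrangian} subspace for an induced pairing on the quotient $E_\tau^\ell = (\mathcal H_\tau/\omega_\tau^{[\ell]})[Q^\ell]$. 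For this to make sense one has to check that (i) $E_\tau^\ell$ is locally free of the expected rank, and (ii) the pairing on $\mathcal H_\tau$ descends to a perfect pairing on $E_\tau^\ell$; both require that $\omega_\tau^{[\ell]}$ be totally isotropic for the pairing $h_\ell$ on $\mathcal H_\tau[Q_\ell]$ (this is the content of the paper's Corollary~\ref{proppolQ}). Once this is verified, each step is a smooth Lagrangian Grassmannian and your argument goes through, but the phrase ``compatible with a symplectic splitting'' does not by itself supply this verification.
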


Such a result was clearly expected in \cite{P-R}, and was already proven in the case of the Hilbert modular varieties in \cite{Sasaki},\cite{RX}. Our proof is very similar, using the definitions of 
\cite{P-R} and the local study we make in section \ref{sect2}. Also it is clear that the assumption that no prime falls in case (AR) is necessary, as explained in the Appendix.

The main result of this article is a study of the special fiber $X_\kappa$ of $X$. Recalling that for a (P.E.L.) Shimura variety $S$ associated with data unramified at $p$, we can look at the 
Newton stratification of the special fiber of $S$, which we now know has all the expected properties, in particular their $\mu$-ordinary locus is open and dense (\cite{Wed1,Ham}).
In this article we study a similar question in our situation, and we investigate another natural stratification, that we call the Hodge stratification on $X_\kappa$, encoding the position of the Hodge polygon (defined in \cite{BH}). Even if we show that this stratification doesn't behave as well as expected (except in case of very small ramification – $e=2$ – and even only away from case (AR)), we prove that the open stratum, the \textit{generalised Rapoport locus}, is dense (except in case (AR), again). This locus coincides with the usual Rapoport locus in the Hilbert case, hence the denomination.

\begin{theor}
If no prime in $\mathcal P$ falls in case (AR), the generalised Rapoport locus is open and dense.
\end{theor}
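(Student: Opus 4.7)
The proof splits naturally into openness and density. For openness, the generalised Rapoport locus is by definition the locus where the Hodge polygon of \cite{BH} attains its minimal value. Since this polygon varies upper semi-continuously on $X_\kappa$ (the standard argument applies here, the Hodge polygon being built from the ranks of filtration pieces on the Lie algebra of the universal abelian scheme), the locus where it is minimal is automatically open.

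For density, the plan is to reduce to a local computation via the local model diagram of section \ref{sect2}. The local model $M^{\mathrm{loc}}$ decomposes, by the description in section \ref{sectdecpol}, as a product $M^{\mathrm{loc}} = \prod_{\pi \in \mathcal P} M^{\mathrm{loc}}_\pi$ indexed by the primes above $p$. The Hodge stratification is étale-local and respects this product decomposition, so density on $X_\kappa$ will follow from density of the corresponding Rapoport locus in each $M^{\mathrm{loc}}_{\pi,\kappa}$. Under the no-(AR) hypothesis, Theorem \ref{thr229} gives that each $M^{\mathrm{loc}}_\pi$ is smooth over $\Spec(\mathcal O_{K})$, hence its special fibre is a smooth equidimensional $\kappa$-scheme and its irreducible components are exactly its connected components. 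Density then reduces to exhibiting, in each connected component of $M^{\mathrm{loc}}_{\pi,\kappa}$, a single closed point at which the Hodge polygon is minimal; this I would do case by case for $\pi$ of type (C), (AL), (AU), using the explicit Pappas-Rapoport parametrisation of $\mathcal O$-stable filtrations on a chart of $M^{\mathrm{loc}}_\pi$ to write down a filtration whose associated graded realises the generic Hodge polygon.

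The main obstacle is the combinatorial verification in the unitary inert case (AU). There, the Pappas-Rapoport filtration has many steps at each embedding, and one must carefully choose the jumps so as to produce the minimal polygon while still defining a point of the prescribed connected component of $M^{\mathrm{loc}}_{\pi,\kappa}$; the cases (C) and (AL) are simpler and can be treated either directly or by reduction to a Grassmannian model. The hypothesis ruling out case (AR) is essential in two ways: smoothness fails in (AR) (cf.\ the Appendix), and more seriously, the local model in (AR) has strictly larger Hodge polygon on a non-trivial closed subset of every irreducible component, preventing density of the generic stratum even locally.
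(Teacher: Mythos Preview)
Your openness argument is correct and matches the paper's (it is Proposition \ref{prop222}).

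For density, your route is genuinely different from the paper's. The paper does \emph{not} pass through the local model diagram; instead it works directly on $X_\kappa$ via Serre--Tate and Grothendieck--Messing. Given an arbitrary closed point $x$, the paper lifts the Pappas--Rapoport filtration of $\omega_{G,\tau}$ step by step over $k[[t]]$, using two elementary linear-algebra lemmas (Lemmas \ref{lift_L} and \ref{lift_C}) which control, at each stage, the generic dimension of the intersection of the lifted $\omega_\tau^{[j]}$ with the $\pi$-torsion layers $M[X^i]$. An induction (Lemma \ref{lemma35}) shows one can arrange $\dim\widetilde{\omega}_\tau^{[e]}[X^j] = \max_{k_1<\dots<k_j} d_{k_1}+\dots+d_{k_j}$ for all $j$, which by Lemma \ref{lemma36} is exactly the condition $\Hdg_\tau = \PR_\tau$. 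Case (C) is handled the same way with the extra isotropy constraint absorbed by Lemma \ref{lift_C}; case (AU) reduces to (AL) by duality. This gives, for every $x$, an explicit map $\Spec(k[[t]])\to X_\kappa$ specialising to $x$ with generic point in the Rapoport locus.

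Your approach trades this explicit deformation for a global smoothness-plus-connectedness argument. That is conceptually clean, but as written it has real gaps. First, the paper does not actually set up a local model diagram in section \ref{sect2}; it only remarks after Theorem \ref{thr229} that one could have argued that way, so you would need to import the diagram from \cite{P-R} and check it is compatible with the Hodge stratification. Second, Theorem \ref{thr229} is stated for $X$, not for $M^{\mathrm{loc}}$; the translation is routine but must be made. Third, and most importantly, you never determine the connected components of $M^{\mathrm{loc}}_{\pi,\kappa}$ nor exhibit a Rapoport point in each. In fact this step is easier than you suggest: since $M^{\mathrm{loc}}_\pi$ is projective and (once you have established it) smooth over $\mathcal O_K$ with geometrically connected generic fibre (a product of Grassmannians), Stein factorisation forces the special fibre to be connected, and then a single explicit split filtration suffices in every case, (AU) included. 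So your strategy can be completed, but the sketch as it stands defers precisely the points that require work.

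What each approach buys: the paper's argument is self-contained, needs no local model machinery, and produces an explicit generisation of every point, which is reused later in the density of the $\mu$-ordinary locus (Theorem \ref{thr41}). Your argument, once the connectedness is nailed down, would be shorter and avoids the somewhat delicate Lemmas \ref{lift_L}--\ref{lemma35}, but it gives only existence rather than an explicit deformation.
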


We actually prove this result by hand by explicitely constructing a deformation of a $p$-divisible group to the generalised Rapoport locus, see Theorem \ref{thrRap}. Then we 
investigate the similar result for the Newton stratification. Because of our earlier results on Pappas-Rapoport data (see \cite{BH}), we know that the $\mu$-ordinary locus, which 
coïncides with the (big) open stratum of the Newton stratification, lies inside the generalised Rapoport locus. Here, we prove that it is dense, generalising work of Wedhorn (\cite{Wed1}) in the case of a ramified Shimura variety.

\begin{theor}
If no prime in $\mathcal P$ falls in case (AR), the $\mu$-ordinary locus in $X_\kappa$ is open and dense.
\end{theor}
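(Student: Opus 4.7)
The plan is to combine three ingredients: Grothendieck's specialization theorem for Newton polygons of $p$-divisible groups, the density of the generalised Rapoport locus (the theorem immediately above), and the earlier result from \cite{BH} that the $\mu$-ordinary locus is contained in the generalised Rapoport locus.

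Openness of the $\mu$-ordinary locus is essentially formal. By Grothendieck specialization, for any polygon $\nu$ the locus of $X_\kappa$ where the Newton polygon of the universal $p$-divisible group lies on or above $\nu$ is closed. Since by \cite{BH} the $\mu$-ordinary polygon is the lowest Newton polygon that occurs (and is attained on the generalised Rapoport locus), the $\mu$-ordinary locus is the complement of finitely many closed Newton strata, hence open.

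For density, the strategy is to reduce the problem to the Rapoport locus. Combined with the previous theorem, which asserts that the Rapoport locus is open and dense in $X_\kappa$, it suffices to prove density of the $\mu$-ordinary locus inside the Rapoport locus. The key observation is that on this locus the Pappas-Rapoport filtration of the conormal sheaf has locally free graded pieces of the expected ranks, so the local structure closely resembles the unramified case treated by Wedhorn \cite{Wed1}. The hypothesis that no prime in $\mathcal P$ falls in case (AR) is essential here: as discussed in the Appendix, that is precisely the case in which the ramification obstructs such a reduction.

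The main step — and the step I expect to be the principal obstacle — is an explicit deformation argument. Given a closed geometric point $x$ of the Rapoport locus, one wants to produce a deformation of the $p$-divisible group with PEL and Pappas-Rapoport structure, over a curve inside the local model at $x$, whose generic fibre is $\mu$-ordinary. I would follow the strategy of Theorem \ref{thrRap}, decomposing along the factorization $\mathcal{P} = (C) \sqcup (AL) \sqcup (AU)$ and handling each local factor separately using the local models developed in section \ref{sect2}. On the Rapoport locus each local factor is essentially a product of Grassmannians, so Wedhorn's construction can be adapted factor by factor. The delicate points will be to ensure that the deformation respects the polarization and the action of the order of endomorphisms on the nose, to maintain compatibility with the Pappas-Rapoport filtration along the deformation, and to verify using the Hodge-Newton type bounds from \cite{BH} that the Newton polygon of the deformed $p$-divisible group really does drop to the $\mu$-ordinary one.
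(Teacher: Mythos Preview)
Your high-level outline is correct and matches the paper: openness via semicontinuity of the Newton polygon, reduction to the generalised Rapoport locus via Theorem~\ref{thrRap}, and then a case-by-case deformation argument in types (AL), (AU), (C) adapting Wedhorn's method. That is exactly the architecture of Section~4.

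There is, however, a potential gap in how you describe the main step. You write that you would ``follow the strategy of Theorem~\ref{thrRap}'' and that ``on the Rapoport locus each local factor is essentially a product of Grassmannians.'' This language suggests deforming the Hodge filtration via Grothendieck--Messing, which is indeed what Theorem~\ref{thrRap} does. But the Newton polygon is insensitive to the Hodge filtration alone: moving $\omega_G$ inside $H^1_{dR}$ while keeping the crystal fixed does not change $\Newt(G)$. To make the Newton polygon drop you must deform the Frobenius structure itself. The paper does this via displays: the key input is Proposition~\ref{Wed328} (Zink--Wedhorn), which from a nilpotent $\mathcal O$-linear, skew-symmetric endomorphism $N$ of the Dieudonn\'e module produces a display over $k[[t]]$ with modified Frobenius $F_N = (1+[t]N)F$. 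The substance of the proof is then the explicit construction of such $N$ in each case so that $F_N$ becomes less nilpotent modulo $\pi$, combined with an induction on the height of the bi-infinitesimal part via Hodge--Newton decomposition (\cite{BH}, Th\'eor\`eme 1.3.2). Case (AU) in particular requires extra work: one introduces ``non-necessarily parallel'' polarised $\mathcal O$-crystals (Definition~\ref{defseq} ff.) to strip off the first Hodge slope while keeping track of the hermitian pairing.

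Separately, your worry about ``maintaining compatibility with the Pappas--Rapoport filtration along the deformation'' is a non-issue once you are in the Rapoport locus: the display deformations of Proposition~\ref{Wed328} leave the Hodge polygons unchanged, so every generisation remains in the Rapoport locus and the filtration lifts canonically. The paper exploits this to ignore the PR datum entirely during the Newton-polygon deformation.
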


This result actually implies the previous one, but the proof uses the density of the generalised Rapoport locus, together with the methods of deformation of $p$-divisible groups introduced in \cite{Wed1}, and relies on calculations on displays. Here we slightly simplify some arguments of \cite{Wed1}, constructing "by hand" deformations when we can. This density result extends the work of \cite{Wed1} (which deals with the unramified case). Note also the work of Wortmann (\cite{Wortmann}) for Hodge-Type Shimura varieties with good reduction at $p$ and work of He-Rapoport (\cite{HeRap}) and He-Nie (\cite{HeNie}) to compare the $\mu$-ordinary locus to EKOR strata and to reformulate this density in terms of Weyl groups. \\
Finally, we give an equivalent condition, similar to the unramified case, for the existence of an ordinary locus. Namely, we prove that the ordinary locus is non empty if and only if the prime $p$ is totally split in the reflex field, extending the result in \cite{Wed1} in the unramified case.

We would like to thank F. Andreatta and E. Goren for an interesting discussion and suggesting to have a look at \cite{Kramer}, P. Hamacher, M. Rapoport, T. Richarz for interesting discussions about this article and related works.

\section{Shimura datum, Pappas-Rapoport condition and stratifications}
\label{sect2}
\subsection{Shimura Datum} \todo{Fixer un nombre premier p  : fait avant hypothèse 2.2 $\Box$}

Let $(B,\star)$ be a finite dimensional central semisimple $\QQ$-algebra endowed with a positive involution, with center $F$, and $(V,<.,.>)$ be a non-degenerate skew hermitian $B$-module, and let $G$ be the algebraic group over $\QQ$ of (similitude)-automorphisms of $(V,<.,.>)$, i.e. representing the functor,
\[ G(R) = \{ (g,c) \in \GL(V\otimes_\QQ R) \times \mathbb G_m(R) | <gz,gz'> = c<z,z'>, \forall z \in V\otimes_\QQ R\},\]
on $\QQ$-algebras $R$.

Let $h : \CC \fleche \End_B(V_\RR)$ be a $\RR$-algebra homomorphism such that $h(\overline z) = h(z)^\star$ and the bilinear form $(\cdot,h(i)\cdot)$ on $V_\RR$ is definite positive.

\subsection{Characteristic zero moduli space}
\label{sect2.2}
Let us denote by $E$ the reflex field of the (Shimura) datum $(G,h)$; it is a number field. Fix $K \subset G(\mathbb A_f)$ a neat\footnote{This is mainly to simplify the exposition.} compact open subgroup. 

Following \cite{Lan} Definition 1.4.2.1, call $\mathcal S_K$ \todo{$S_K$ ou $X_K$ ? $S$ pour le moment, puis on montre en 2.23 que c'est la meme chose que X (PR) $\Box$}the moduli problem over $\Spec(E)$ that associate to $S$ the quasi-isogeny classes of quadruples $(A,\lambda,\iota,\eta)$, where 
$A \fleche S$ is a abelian scheme, 
$\lambda$ is a $\QQ^\times$-polarisation of $A$, $i :B \fleche \End(A)\otimes (\QQ)_S$ \todo{$O_B$ pas défini : c'est $B$ en fait $\Box$} is a morphism compatible with $\star$ and the Rosatti involution, and $\eta$ is a rational level structure of type $K$ of $A$ (see \cite{Lan} Definition 1.4.1.2 for a precise formulation). We moreover require that this quadruple satisfies the \textit{determinant condition}, see \cite{KotJams} section 5 or \cite{Lan} definition 1.3.4.1.
Then $\mathcal S_K$ is representable by a scheme over $\Spec(E)$. This is, for example, \cite{Lan} Corollary 1.4.3.7 and Corollary 7.2.3.10.

\begin{rema}
If $p$ is a good prime for $G,K$, we could give an analogous definition by $\ZZ_{(p)}$-isogeny instead of quasi-isogeny (i.e. $\QQ^\times$-isogeny), as we will do later in the text, but we would need to introduce integral data to give a meaning to good primes (see our definition in section \ref{sect24}, and \cite{Lan} sections 1.4.2, 1.4.3).
\end{rema}

From now on, we fix a prime $p$. Let us be more specific about the determinant condition when $S$ is over $\QQ_p$. First let us assume that the following hypothesis on $p$ and $B$ is satisfied.

\begin{hypothese}
\label{hyp1}
$B_{\QQ_p}$ is isomorphic to a product of matrix algebras over finite extension of $\QQ_p$, such that factors are either stable by $\star$ or exchanged two-by-two by $\star$. 
Note that we do not suppose the extensions to be unramified. This assumption excludes factors of type (D) (orthogonal factors).
\end{hypothese}

\begin{example}
If $B = F$, with $F/F^+$ a CM field with totally real field $F^+$, and $\star$ the complex conjugation, the previous hypothesis is verified as $B_{\QQ_p} = \prod_{\pi | p} F \otimes_{F^+} F^+_\pi$ where $\pi$ ranges over places over $p$ in $\mathcal O_{F^+}$ and $F^+_\pi$ is the $\pi$-adic completion of $F^+$. 
\end{example}

By hypothesis, we can decompose $B_{\QQ_p}  = \bigoplus_{i =1}^r M_{n_i}(F_i)$ where $F_i/\QQ_p$ is a finite, possibly ramified, extension. Remark that the involution $\star$ on $B$ acts on the set $\{1,\dots,r\}$, we denote $s(i)$ the image of $i$ by this involution. Denote by $E_\nu$ a $p$-adic completion of $E$, thus $E_\nu$ is a finite extension of $\QQ_p$.
If $(A,\lambda,\iota,\eta)$ is an object over $S$ in $\mathcal S_K \otimes_E {E_\nu}$, then $\omega_A = \Lie(A)^\vee$ is a $\mathcal O_S \otimes_{\QQ} B$ module, but as $S$ is over $\QQ_p$, it is a $\mathcal O_S \otimes_{\QQ_p} B_{\QQ_p}$-module, and we can thus decompose it as
\[\omega_A = \bigoplus_{i=1}^r \omega_{A,i},\]
where $\omega_{A,i}$ is a $\mathcal O_S \otimes_{\QQ_p} M_{n_i}(F_i)$-module. Using Morita equivalence, decompose $\omega_{A,i} = \mathcal O_S^{n_i}\otimes_{\mathcal O_S} \omega_i$, where\footnote{$e_i$ is the Morita projector associated to the matrix $E_{1,1}$ seen as an element of $M_{n_i}(F_i)$} $\omega_i = e_i\omega_{A,i}$ is endowed with an action of $\mathcal O_{F_i}$, and, up to an extension of scalars for $S$, we can further decompose $\omega_{i} = \bigoplus_{\tau \in \mathcal T'_i}\omega_{i,\tau}$, as locally free $\mathcal O_S$-modules, where $\mathcal T'_i = \Hom(F_i,\CC_p)$. Then the determinant condition is equivalent to asking the locally free $(\omega_{i,\tau'})$ to have fixed dimension $(d_{i,\tau'})_{i,\tau'}$, where the integers $(d_{i,\tau'})_{i,\tau'}$ are fixed by $h$ as follows. Denote $V_\CC = V_1 \oplus V_2$ the decomposition where $h(z)$ acts as $z$ (resp. $\overline z$) on $V_1$ (resp. $V_2$). Then the reflex field $E \subset \CC$ is the number field where the isomorphism class of complex $B$-representation $V_1$ is defined. It thus makes sense to consider $V_{\overline{\QQ_p}} = V_{1,\overline{\QQ_p}} \oplus  V_{2,\overline{\QQ_p}}$ as a $B_{\overline{\QQ_p}}$-representation. Using the hypothesis on $B$, decompose,
\[ V_{1,\overline{\QQ_p}} = \prod_{i=1}^r V_1^i \otimes_{F_i\otimes_{\QQ_p}\overline{\QQ_p}} (F_i\otimes_{\QQ_p} \overline{\QQ_p})^{n_i},\]
by Morita, where $V_1^i$ is a $F_i\otimes_{\QQ_p} \overline{\QQ_p}$-module that we can further decompose as,
\[ V_1^i = \prod_{\tau' \in \Hom(F_i,\overline{\QQ_p})} (V_1^i)_{\tau'}.\]
Then $d_{i,\tau'}$ is the dimension of $(V_1^i)_{\tau'}$.

\begin{rema}
\label{remadpol}
As our Shimura datum comes from an object over $\QQ$, we can check that, for all $i$, and for all $\tau',\tau''$, we have \todo{c = complex conjugation? Pas forcement : dans le cas AU/AR oui, dans le cas AL en fait $F_i = K \times K$ et $c$ échange les deux. $\Box$}
\[ d_{i,\tau''} + d_{s(i,\tau'')} = d_{i,\tau'} + d_{s(i,\tau')} = h_i,\]
is independent of $\tau'$, where $s$ is the action induced by $\star$ (in the case where to factors $i,j$ are exchanged by $\star$, recall that we set $j = s(i)$). This is for example \cite{Lan}, end of page 59.
\end{rema}

\subsection{Pappas-Rapoport data}
\label{sect23}

The goal of this section is to define a Pappas-Rapoport datum in order to define an integral model for the variety $\mathcal S_K$,\footnote{Such a datum, introduced in \cite{P-R}, is refered there as a splitting datum} which is analogous to Kottwitz determinantial condition but better behaved in ramified characteristics. We define such a datum in this section, and explain its behavior with duality.

\subsubsection{Definition}

Let $L/\QQ_p$ be a finite extension, $K$ be an extension of $\QQ_p$ containing the Galois closure of $L$, and $S$ is an $\mathcal O_K$-scheme. 
Denote $L^{ur}$ the maximal unramified subfield of $L$, and $\mathcal T = Hom(L^{ur},\CC_p)$ the set of unramified embeddings, and fix $\pi$ a uniformiser of $L$, with Eisenstein polynomial $Q$. 
In particular we can identify, sending $T$ to $\pi$,
\[ \mathcal O_{L^{ur}}[T]/(Q(T)) \simeq \mathcal O_F.\]
Let us fix an embedding $\tau$ of $L^{ur}$ into $K$, and define $\Sigma$ as the set of embeddings of $L$ into $K$ extending $\tau$. It is a set of cardiality $e$, and let us choose an ordering $\Sigma = \{\sigma_1, \dots, \sigma_{e}\}$ for this set.

Let $\mathcal{N} \to S$ be a locally free sheaf with an action $\mathcal O_F$, such that $\mathcal O_{L^{ur}}$ acts on $\mathcal{N}$ by $\tau$. We will denote by $[\pi]$ the action of $\pi$ on $\mathcal{N}$. Let $(d_{1}, \dots, d_{e})$ be a collection of integers. We recall the definition of a Pappas-Rapoport datum.

\begin{defin}
A Pappas-Rapoport datum for $\mathcal{N}$ with respect to the collection $(\sigma_i, d_i)_{i=1,\dots,e}$ consists in a filtration
$$0 = \mathcal{N}^{[0]} \subset \mathcal{N}^{[1]} \subset \dots \subset  \mathcal{N}^{[e]} = \mathcal{N}$$
such that
\begin{enumerate}
\item The $\mathcal{N}^{[j]}$ are $\mathcal O_S$-locally direct factors stable by $\mathcal O_{L}$
\item  $([\pi] - \sigma_{j} (\pi)) \cdot \mathcal{N}^{[j]} \subset \mathcal{N}^{[j-1]}$, for all $1 \leq  j \leq e$.
\item $\mathcal{N}^{[j]}/\mathcal{N}^{[j-1]}$ is locally free of rank $d_{j}$ for all $1 \leq  j \leq e$.
\end{enumerate}
\end{defin} 

\subsubsection{Duality}
\label{sect232}

Next, we want to explain the compatibility with duality for this datum. Assume that there exists a sheaf $\mathcal{E}$, locally free of rank $h$ as a $\mathcal O_S \otimes_{\mathcal O_{F^{ur}}, \tau} \mathcal O_F$-module, such that $\mathcal{N}$ is locally a direct factor of $\mathcal{E}$. Let $\mathcal{M} := (\mathcal{E} / \mathcal{N})^\vee$ ; it is a locally free sheaf over $S$, and has an action of $\mathcal O_L$ (with $\mathcal O_{L^{ur}}$ acting by $\tau$). One thus has an exact sequence

$$0 \to \mathcal{N} \to \mathcal{E} \to \mathcal{M}^\vee \to 0$$

Let us introduce some more notation. Define $\pi_i := \sigma_i(\pi)$ for $1 \leq i \leq e$, and let us introduce the polynomials for $1 \leq \ell \leq e$
\[ Q_\ell := \prod_{i=1}^{\ell} (T - \pi_i) \quad \text{and} \quad Q^\ell := \prod_{i=\ell+1}^{e} (T - \pi_i).\]
Note that the hypothesis made on $\mathcal E$ means that it is locally free as a $\mathcal O_S [T] / Q(T)$-module (with $T$ acting by $\pi$).

\begin{defin}
Let us define a complete filtration on $\mathcal{E}$ 
$$0 = \mathcal{N}^{[0]} \subset \mathcal{N}^{[1]} \subset \dots \subset  \mathcal{N}^{[e]} = \mathcal{N} \subset \mathcal{N}^{[e+1]} \subset \dots \subset \mathcal{N}^{[2e]} = \mathcal{E}$$
by the formulas
$$\mathcal{N}^{[2e-\ell]} = \left( Q^\ell (\pi) \right)^{-1} (\mathcal{N}^{[\ell]}).$$
for every $1 \leq \ell \leq e-1$. \\
A \textit{full Pappas-Rapoport datum} for $(\mathcal E,\mathcal N)$ with respect to $(\sigma_i,d_i)_{i=1,\dots,e}$ is a complete filtration of the previous form, where $(\mathcal N^{[i]})_{i=1,\dots,e}$ is a Pappas-Rapoport datum for $\mathcal N$ with respect to the same data.
\end{defin}

The conditions imposed by the Pappas-Rapoport datum imply that the inclusions $\mathcal{N}^{[e+j]} \subset \mathcal{N}^{[e+j+1]}$ are satisfied for every $0 \leq j \leq e-1$.

\begin{lemm}
Let $1 \leq j \leq e-1$ be an integer. The sheaf $\mathcal{N}^{[e+j]}$ is locally free of rank \[jh + d_1 + \dots + d_{e-j} = \dim_{\mathcal O_S}\mathcal N + h-d_e + \dots + h-d_{e-j+1}.\] Moreover, one has
$$([\pi] - \sigma_{e-j+1}(\pi)) \mathcal{N}^{[e+j]} \subset \mathcal{N}^{[e+j-1]}.$$
\end{lemm}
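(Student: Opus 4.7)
The plan is to set $\ell := e-j$, so that by definition $\mathcal{N}^{[e+j]} = (Q^{\ell}(\pi))^{-1}(\mathcal{N}^{[\ell]})$, and to realise $\mathcal{N}^{[e+j]}$ as the middle term of a short exact sequence whose outer terms are locally free of known ranks. The first step is to iterate the Pappas-Rapoport condition $([\pi]-\sigma_i(\pi))\mathcal{N}^{[i]} \subset \mathcal{N}^{[i-1]}$ for $i=\ell, \ell-1,\dots,1$, which yields $Q_{\ell}(\pi)\cdot \mathcal{N}^{[\ell]} = 0$, i.e.\ $\mathcal{N}^{[\ell]} \subset \mathcal{E}[Q_{\ell}(\pi)]$.

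Next, using the hypothesis that $\mathcal{E}$ is locally free as an $\mathcal{O}_S[T]/Q(T)$-module and the factorization $Q = Q_{\ell}\cdot Q^{\ell}$ in $\mathcal{O}_S[T]$, I would verify that multiplication by $Q^{\ell}(\pi)$ on $\mathcal{E}$ has kernel $\mathcal{E}[Q^{\ell}(\pi)]$ locally isomorphic to $(\mathcal{O}_S[T]/Q^{\ell}(T))^h$, in particular locally free of $\mathcal{O}_S$-rank $jh$, and image exactly $\mathcal{E}[Q_{\ell}(\pi)]$. Combining with the first step, $Q^{\ell}(\pi)$ maps $\mathcal{N}^{[e+j]}$ surjectively onto $\mathcal{N}^{[\ell]}$, producing a short exact sequence
\[ 0 \to \mathcal{E}[Q^{\ell}(\pi)] \to \mathcal{N}^{[e+j]} \xrightarrow{Q^{\ell}(\pi)} \mathcal{N}^{[\ell]} \to 0. \]
Since $\mathcal{N}^{[\ell]}$ is locally free of rank $d_1+\cdots+d_{\ell}$ (by iteration of condition (3)), this sequence splits Zariski-locally, so $\mathcal{N}^{[e+j]}$ is locally free of the announced rank $jh + d_1+\cdots+d_{e-j}$; the reformulation follows at once from $\dim_{\mathcal{O}_S}\mathcal{N} = d_1+\cdots+d_e$.

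For the compatibility with the $\pi$-action, I would use the polynomial identity $Q^{e-j}(T) = Q^{e-j+1}(T)\cdot(T-\pi_{e-j+1})$. For $x \in \mathcal{N}^{[e+j]}$ one has $Q^{e-j}(\pi)x \in \mathcal{N}^{[e-j]} \subset \mathcal{N}^{[e-j+1]}$, hence $Q^{e-j+1}(\pi)\bigl((\pi-\pi_{e-j+1})x\bigr) \in \mathcal{N}^{[e-j+1]}$, which is exactly the condition for $(\pi-\pi_{e-j+1})x$ to lie in $\mathcal{N}^{[e+j-1]} = (Q^{e-j+1}(\pi))^{-1}(\mathcal{N}^{[e-j+1]})$.

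The only slightly non-routine point is identifying the kernel and image of $Q^{\ell}(\pi)$ acting on $\mathcal{E}$; but since this is a local question and $\mathcal{E}$ is locally a free $\mathcal{O}_S[T]/Q(T)$-module, it reduces to the elementary statement for the rank-one free module $\mathcal{O}_S[T]/Q(T)$ over itself, where kernel and image of multiplication by $Q^{\ell}$ are read off directly from the factorization $Q = Q_{\ell}Q^{\ell}$.
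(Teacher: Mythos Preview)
Your proof is correct and is precisely the ``easy computation'' the paper declines to spell out. The short exact sequence you construct, together with the identification of the kernel and image of $Q^\ell(\pi)$ via the local freeness of $\mathcal E$ over $\mathcal O_S[T]/Q(T)$, is the natural way to compute the rank, and your factorization argument for the inclusion is exactly right (the edge case $j=1$ works because $Q^e=1$).
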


\begin{proof}
This is an easy computation.
\end{proof}

One deduces from this lemma that one has a Pappas-Rapoport datum for $\mathcal{M}$.

\begin{prop}
\label{propdual}
The complete filtration on $\mathcal{E}$ induces a Pappas-Rapoport datum for $\mathcal{M}$ with respect to the collection $((\sigma_1,\dots,\sigma_e),(h-d_{1}, \dots, h-d_{e}))$.
\end{prop}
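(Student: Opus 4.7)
My plan is to define the filtration on $\mathcal{M}$ by dualising the \emph{upper half} of the complete filtration on $\mathcal{E}$, and then to check the three Pappas-Rapoport conditions one by one. Concretely, for $0 \le j \le e$, since $\mathcal{N}^{[2e-j]}$ is locally a direct factor of $\mathcal{E}$ containing $\mathcal{N}$, the quotient $\mathcal{N}^{[2e-j]}/\mathcal{N}$ is locally a direct factor of $\mathcal{E}/\mathcal{N}$. Dualising the (locally split) short exact sequence
$$0 \to \mathcal{N}^{[2e-j]}/\mathcal{N} \to \mathcal{E}/\mathcal{N} \to \mathcal{E}/\mathcal{N}^{[2e-j]} \to 0$$
lets me set
$$\mathcal{M}^{[j]} := (\mathcal{E}/\mathcal{N}^{[2e-j]})^\vee = \ker\!\Bigl(\mathcal{M} \twoheadrightarrow (\mathcal{N}^{[2e-j]}/\mathcal{N})^\vee\Bigr),$$
which yields a filtration $0 = \mathcal{M}^{[0]} \subset \mathcal{M}^{[1]} \subset \dots \subset \mathcal{M}^{[e]} = \mathcal{M}$ by local direct factors.

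For condition (3), I identify the graded piece $\mathcal{M}^{[j]}/\mathcal{M}^{[j-1]}$ with $(\mathcal{N}^{[2e-j+1]}/\mathcal{N}^{[2e-j]})^\vee$. Using the rank formula of the preceding lemma, $\rg \mathcal{N}^{[e+k]} = kh + d_1 + \dots + d_{e-k}$, a direct computation with $k = e-j+1$ and $k = e-j$ gives a jump equal to $h - d_j$, as required.

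For condition (2), I want to show $([\pi] - \sigma_j(\pi))\mathcal{M}^{[j]} \subset \mathcal{M}^{[j-1]}$. By construction, $\mathcal{M}^{[j]}$ consists of those $\phi \in \mathcal{M}$ which vanish on $\mathcal{N}^{[2e-j]}/\mathcal{N}$. Applying the preceding lemma with its index equal to $e-j+1$ (so that $e-j+1 \mapsto 2e-j+1$ and $\sigma_{e-(e-j+1)+1} = \sigma_j$) gives
$$\bigl([\pi] - \sigma_j(\pi)\bigr)\mathcal{N}^{[2e-j+1]} \subset \mathcal{N}^{[2e-j]}.$$
Therefore for $\phi \in \mathcal{M}^{[j]}$ and $x \in \mathcal{N}^{[2e-j+1]}/\mathcal{N}$ one computes
$$\bigl(([\pi]-\sigma_j(\pi))\phi\bigr)(x) = \phi\bigl(([\pi]-\sigma_j(\pi))x\bigr) \in \phi\bigl(\mathcal{N}^{[2e-j]}/\mathcal{N}\bigr) = 0,$$
so that $([\pi] - \sigma_j(\pi))\phi \in (\mathcal{E}/\mathcal{N}^{[2e-j+1]})^\vee = \mathcal{M}^{[j-1]}$.

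Condition (1) is essentially automatic: local direct-factoredness of $\mathcal{M}^{[j]}$ in $\mathcal{M}$ has been built into the construction, and $\mathcal{O}_L$-stability reduces to $[\pi]$-stability because $\mathcal{O}_{L^{ur}}$ acts through the single embedding $\tau$ on both $\mathcal{N}$ and $\mathcal{M}$; $[\pi]$-stability of $\mathcal{M}^{[j]}$ then follows from $[\pi]$-stability of $\mathcal{N}^{[2e-j]}$, which is part of the complete filtration on $\mathcal{E}$. I do not expect any genuine obstacle here—the argument is purely formal once the duality is set up; the only thing to be careful about is the bookkeeping of the index translation $j \leftrightarrow 2e-j$ and the matching $\sigma_j \leftrightarrow \sigma_{e-k+1}$, which is what makes the same collection $(\sigma_1,\dots,\sigma_e)$ appear on both sides with the dual partition $(h-d_1,\dots,h-d_e)$.
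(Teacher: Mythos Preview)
Your proof is correct and follows exactly the approach the paper intends: the proposition is stated without proof in the paper, as an immediate consequence of the preceding lemma, and you have simply spelled out the dualisation $\mathcal{M}^{[j]} := (\mathcal{E}/\mathcal{N}^{[2e-j]})^\vee$ and checked the three conditions using that lemma. The index bookkeeping (in particular matching $\sigma_{e-j'+1}$ with $\sigma_j$ via $j' = e-j+1$) and the rank computation are both right.
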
 

\begin{rema}
In special fiber, the situation is quite simpler. Indeed, one has simply $Q_\ell (\pi) = \pi^\ell$, $Q^\ell (\pi) = \pi^{e-\ell}$ and
$$\mathcal{N}^{[2e-\ell]} = \left( \pi^{e-\ell} \right)^{-1} (\mathcal{N}^{[\ell]})$$
for every $1 \leq \ell \leq e-1$.
\end{rema}

%
%
%

\subsubsection{Pairing}
\label{sect233}

Assume in this section that the sheaf $\mathcal{E}$ has a perfect alternating pairing $< , > : \mathcal{E} \times \mathcal{E} \to \mathcal{O}_S$. Suppose also that this pairing is compatible with the action of $\mathcal{O}_L$, i.e. that $<a \cdot x, y> = <x, a \cdot y>$ for $a \in \mathcal{O}_F$ and $x,y \in \mathcal{E}$. This forces the integer $h$ to be even; let $g$ be such that $h = 2g$. Assume moreover that $\mathcal N$ is maximally isotropic, i.e. $\mathcal N = \mathcal N^\bot$, the latter notation refering to the orthogonal of $\mathcal N$ for the considered pairing. This implies that $\mathcal N$ is locally free of rank $eg$.

\begin{prop}
Fix a Pappas-Rapoport datum for $\mathcal N$ with respect to the collection $(\sigma_i,d_i)_{i=1,\dots,e}$. There exists a complete filtration of $\mathcal E$ given by
$$0 = \mathcal{N}^{[0]} \subset \mathcal{N}^{[1]} \subset \dots \subset  \mathcal{N}^{[e]} = \mathcal{N} \subset {\mathcal{N}^{[e-1]}}^\bot \subset  \dots \subset  {\mathcal{N}^{[1]}}^\bot \subset \mathcal E$$
This filtration induces a Pappas-Rapoport datum for $\mathcal{M}$ with respect to the collection $((\sigma_1,\dots,\sigma_e),(d_{1}, \dots, d_{e}))$.
\end{prop}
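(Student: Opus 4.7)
The plan is to use the alternating pairing to produce a canonical $\mathcal{O}_L$-equivariant isomorphism $\iota\colon \mathcal{N} \xrightarrow{\sim} \mathcal{M}$, and to transport the given Pappas-Rapoport datum on $\mathcal{N}$ to $\mathcal{M}$ through $\iota$. The proof then splits into three essentially routine steps.

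First I would verify that the proposed chain is a well-defined complete filtration of $\mathcal E$. For $1 \leq \ell \leq e-1$ the subsheaf ${\mathcal{N}^{[\ell]}}^\bot \subset \mathcal E$ is $\mathcal{O}_L$-stable (by the $\mathcal O_L$-symmetry of the pairing) and locally a direct summand of rank $eh - (d_1 + \dots + d_\ell)$ (since the pairing is perfect and $\mathcal N^{[\ell]}$ is a local direct summand of rank $d_1+\dots+d_\ell$). Taking orthogonals reverses inclusions, so
\[
\mathcal N = \mathcal N^\bot \subset {\mathcal N^{[e-1]}}^\bot \subset \dots \subset {\mathcal N^{[1]}}^\bot \subset \mathcal E,
\]
which gives the asserted chain.

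Next I would construct $\iota$. The perfect pairing gives an isomorphism $\mathcal{E} \xrightarrow{\sim} \mathcal{E}^\vee$ sending $x$ to $\langle x, - \rangle$. The subsheaf $\mathcal{M} = (\mathcal{E}/\mathcal{N})^\vee \subset \mathcal{E}^\vee$ of functionals vanishing on $\mathcal{N}$ corresponds under this identification to $\{x\in\mathcal E : \langle x, \mathcal N\rangle = 0\} = \mathcal{N}^\bot = \mathcal{N}$, producing $\iota\colon \mathcal{N} \xrightarrow{\sim} \mathcal{M}$, $\iota(x) = \langle x, -\rangle \bmod \mathcal N$. By $\mathcal{O}_L$-symmetry of the pairing, $\iota$ is $\mathcal{O}_L$-equivariant for the natural $\mathcal O_L$-action on $\mathcal M$ coming from duality; in particular $\mathcal O_{L^{ur}}$ still acts on $\mathcal M$ via $\tau$, and the relation $\iota([\pi] x) = [\pi]\, \iota(x)$ follows from $\langle [\pi]x, y\rangle = \langle x, [\pi]y\rangle$.

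Finally I would match the induced filtration on $\mathcal M$ with $\iota(\mathcal N^{[\bullet]})$. The filtration on $\mathcal E$ restricts to one on $\mathcal E/\mathcal N$, whose dual filtration on $\mathcal M = (\mathcal E/\mathcal N)^\vee$ has as $j$-th step $\mathcal M^{[j]}$ the annihilator of $\mathcal N^{[2e-j]}/\mathcal N = {\mathcal N^{[j]}}^\bot/\mathcal N$. An element $\iota(x)$ with $x\in\mathcal N$ lies in this annihilator iff $\langle x, y\rangle = 0$ for all $y \in {\mathcal N^{[j]}}^\bot$, iff $x \in ({\mathcal N^{[j]}}^\bot)^\bot = \mathcal N^{[j]}$ (using that orthogonality is involutive on local direct summands for a perfect pairing). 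Hence $\iota(\mathcal N^{[j]}) = \mathcal M^{[j]}$, and the three Pappas-Rapoport axioms on $(\mathcal N^{[j]})$ transport via the $\mathcal O_L$-equivariant isomorphism $\iota$ to the same axioms on $(\mathcal M^{[j]})$, yielding the desired Pappas-Rapoport datum on $\mathcal M$ with respect to $((\sigma_j),(d_j))$. The main conceptual point is the identification $\mathcal M \cong \mathcal N$ via the pairing; after that, the verification is pure bookkeeping.
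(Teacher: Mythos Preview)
Your proof is correct. The approach differs from the paper's, though both are short. The paper proceeds by direct verification on the orthogonal filtration: it computes the rank of ${\mathcal N^{[e-1]}}^\bot$ as $eg+d_e$, and checks the $\pi$-action condition $([\pi]-\sigma_e(\pi))\,{\mathcal N^{[e-1]}}^\bot \subset \mathcal N$ via the identity $\langle [\pi]x-\sigma_e(\pi)x,\,y\rangle = \langle x,\,[\pi]y-\sigma_e(\pi)y\rangle$ together with $([\pi]-\sigma_e(\pi))\mathcal N \subset \mathcal N^{[e-1]}$, then says the remaining steps are similar. In other words, the paper establishes the conditions on the upper half of the complete filtration one step at a time.

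Your route is more structural: you exploit the maximal isotropy $\mathcal N = \mathcal N^\bot$ to produce an $\mathcal O_L$-equivariant isomorphism $\iota\colon \mathcal N \xrightarrow{\sim} \mathcal M$, then observe that the dual filtration on $\mathcal M$ is exactly $\iota(\mathcal N^{[\bullet]})$ by double-orthogonality, so all three Pappas--Rapoport axioms transport for free. This avoids the step-by-step $\pi$-action check and makes transparent why the same collection $(\sigma_j,d_j)$ reappears on $\mathcal M$ rather than $(\sigma_j,h-d_j)$ as in the duality section. The paper's computation, on the other hand, is closer in spirit to the parallel verification in the duality section and is what is needed anyway for the later comparison with the filtration $\mathcal N^{[2e-\ell]}=(Q^\ell(\pi))^{-1}\mathcal N^{[\ell]}$.
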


\begin{proof}
Let us consider the sheaf ${\mathcal{N}^{[e-1]}}^\bot$. It is locally a direct factor of rank $2eg - (d_1 + \dots + d_{e-1})=eg+d_e$ since $d_1 + \dots + d_e = eg$. Let $x \in {\mathcal{N}^{[e-1]}}^\bot$ and $y \in \mathcal{N}$. Then $[\pi] y - \sigma_e(\pi) y \in \mathcal{N}^{[e-1]}$, and thus
$$0 = <x, [\pi]y - \sigma_e(\pi)y> = <[\pi] x - \sigma_e(\pi) x, y>$$
One then gets that $[\pi] x - \sigma_e(\pi)x \in \mathcal{N}^\bot = \mathcal{N}$. The results for the other sheaves are similar.
\end{proof}

One would of course want that this filtration coincides with the previous one. This is possible only if $d_i=g$ for all $1 \leq i \leq e$, which we will assume in the rest of the section.

\begin{defin}
One says that the filtration $\mathcal N^{[\bullet]}$ is compatible with the pairing if 
$$\mathcal{N}^{[2e-\ell]} = {\mathcal{N}^{[\ell]}}^\bot$$
for all $1 \leq \ell \leq e-1$.
\end{defin}

Let us be a little more explicit about the above condition. 
If $R$ is a polynomial, we denote by $\mathcal{E} [R]$ the kernel of $R(\pi)$ acting on $\mathcal{E}$. One sees in particular that $\mathcal{N}^{[\ell]} \subset \mathcal{E} [Q_\ell]$ for $1 \leq \ell \leq e$.

\begin{prop}
One has for $1 \leq \ell \leq e$
$$\mathcal{E} [Q_\ell]^\bot = \mathcal{E} [Q^\ell]$$
\end{prop}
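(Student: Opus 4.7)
The plan is to reduce the statement to a module-theoretic identification, using the self-adjointness of $[\pi]$. Indeed, the $\mathcal O_F$-compatibility $\langle a x, y\rangle = \langle x, a y\rangle$ means exactly that $[\pi]$, and hence every polynomial in $[\pi]$, is self-adjoint with respect to $\langle\,,\,\rangle$.

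The key intermediate step I would establish is the identification
\[
\mathcal{E}[Q_\ell] \;=\; Q^\ell([\pi])\cdot \mathcal{E}.
\]
The inclusion $\supset$ is automatic from $Q = Q_\ell Q^\ell$ vanishing on $\mathcal{E}$. For $\subset$ I would work locally on $S$, using the hypothesis from \S\ref{sect232} that $\mathcal{E}$ is locally free as an $\mathcal{O}_S[T]/Q(T)$-module; picking a local basis reduces the claim to the purely algebraic assertion that in $R[T]/(Q_\ell Q^\ell)$ the kernel of multiplication by $Q_\ell$ equals the image of multiplication by $Q^\ell$. This in turn is immediate from the fact that $Q_\ell$ is monic, hence a non-zero-divisor in $R[T]$: a lift $f$ of an element of the kernel satisfies $Q_\ell f \in Q_\ell Q^\ell R[T]$, so $f \in Q^\ell R[T]$.

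Granted this identification, the proposition follows from a direct iff-chain: for $x \in \mathcal{E}$,
\begin{align*}
x \in \mathcal{E}[Q_\ell]^\bot
&\iff \langle x,\, Q^\ell([\pi])\,a\rangle = 0 \text{ for every } a \in \mathcal{E}\\
&\iff \langle Q^\ell([\pi])\, x,\, a\rangle = 0 \text{ for every } a \in \mathcal{E}\\
&\iff Q^\ell([\pi])\, x = 0\\
&\iff x \in \mathcal{E}[Q^\ell],
\end{align*}
where the first equivalence uses $\mathcal{E}[Q_\ell] = Q^\ell([\pi])\mathcal{E}$, the second uses self-adjointness of $Q^\ell([\pi])$, and the third uses that $\langle\,,\,\rangle$ is perfect (so the induced map $\mathcal{E}\to\mathcal{E}^\vee$ is an isomorphism).

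The only non-formal ingredient is the identification $\mathcal{E}[Q_\ell] = Q^\ell([\pi])\mathcal{E}$; I expect this to be the main step, but it is short thanks to the local $\mathcal{O}_S[T]/Q$-freeness of $\mathcal{E}$.
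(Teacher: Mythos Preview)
Your proof is correct and follows exactly the paper's approach: both use the identification $\mathcal{E}[Q_\ell] = Q^\ell(\pi)\mathcal{E}$ together with the self-adjointness of $Q^\ell(\pi)$ and the perfectness of the pairing. The only difference is that you supply a justification for the identification (via local freeness and monicity of $Q_\ell$), whereas the paper states it without proof.
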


\begin{proof}
Note that one has $\mathcal{E} [Q_\ell] = Q^\ell (\pi) \mathcal{E}$. The fact that $x$ belongs to $\mathcal{E} [Q_\ell]^\bot$ is thus equivalent to the fact that $<x, Q^\ell(\pi) y> = 0$ for all $y \in \mathcal{E}$. This is equivalent to the relation $Q^\ell (\pi) x = 0$.
\end{proof}

Since the multiplication by $Q^\ell (\pi)$ induces an isomorphism $\mathcal{E} / \mathcal{E}  [Q^\ell] \simeq \mathcal{E} [Q_\ell]$, one has an induced perfect pairing
$$h_\ell : \mathcal{E} [Q_\ell] \times \mathcal{E} [Q_\ell] \to \mathcal{O}_S $$
Explicitly, since $\mathcal{E} [Q_\ell] = Q^\ell(\pi) \mathcal{E}$, one has
$$h_\ell (Q_\ell (\pi) x, Q_\ell (\pi) y) = <x, Q_\ell(\pi) y> = <Q_\ell (\pi) x , y>$$

\begin{cor}
\label{proppolQ}
The filtration $\mathcal N^{[\bullet]}$ is compatible with the pairing if and only if $\mathcal{N}^{[\ell]}$ is totally isotropic in $\mathcal{E} [Q_\ell]$ for the pairing $h_\ell$, for every $1 \leq \ell \leq e$.
\end{cor}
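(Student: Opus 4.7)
The plan is to translate both conditions into a statement about $\mathcal{N}^{[\ell]}$ inside $\mathcal{E}[Q_\ell]$ via the short exact sequence
\[
0 \longrightarrow \mathcal{E}[Q^\ell] \longrightarrow \mathcal{E} \xrightarrow{Q^\ell(\pi)} \mathcal{E}[Q_\ell] \longrightarrow 0
\]
and to check that, modulo the identification $Q^\ell(\pi) : \mathcal{E}/\mathcal{E}[Q^\ell] \simeq \mathcal{E}[Q_\ell]$, the compatibility condition on $\mathcal{N}^{[2e-\ell]}$ reduces exactly to maximal isotropy of $\mathcal{N}^{[\ell]}$ in $\mathcal{E}[Q_\ell]$ for $h_\ell$. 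Fix $1 \leq \ell \leq e-1$ (the case $\ell = e$ is vacuous: $Q^e = 1$, $h_e = <\cdot,\cdot>$, and $\mathcal{N} = \mathcal{N}^\bot$ by hypothesis).

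First I would establish the identity
\[
(\mathcal{N}^{[\ell]})^\bot \;=\; \bigl(Q^\ell(\pi)\bigr)^{-1}\bigl((\mathcal{N}^{[\ell]})^{\bot_{h_\ell}}\bigr),
\]
the right-hand orthogonal being taken inside $\mathcal{E}[Q_\ell]$ for the pairing $h_\ell$. Since $\mathcal{N}^{[\ell]} \subset \mathcal{E}[Q_\ell]$, the preceding proposition yields $(\mathcal{N}^{[\ell]})^\bot \supset \mathcal{E}[Q_\ell]^\bot = \mathcal{E}[Q^\ell] = \ker Q^\ell(\pi)$, so the left-hand side is a union of fibres of $Q^\ell(\pi)$. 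The defining relation $h_\ell(Q^\ell(\pi)\,x,\, v) = <x, v>$ for $x \in \mathcal{E}$ and $v \in \mathcal{E}[Q_\ell]$ then shows that $x \in (\mathcal{N}^{[\ell]})^\bot$ if and only if $Q^\ell(\pi)\,x$ is $h_\ell$-orthogonal to $\mathcal{N}^{[\ell]}$, which is the claimed identity.

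Plugging this into the recall formula $\mathcal{N}^{[2e-\ell]} = (Q^\ell(\pi))^{-1}(\mathcal{N}^{[\ell]})$, the compatibility condition $\mathcal{N}^{[2e-\ell]} = (\mathcal{N}^{[\ell]})^\bot$ becomes
\[
\bigl(Q^\ell(\pi)\bigr)^{-1}(\mathcal{N}^{[\ell]}) \;=\; \bigl(Q^\ell(\pi)\bigr)^{-1}\bigl((\mathcal{N}^{[\ell]})^{\bot_{h_\ell}}\bigr).
\]
Surjectivity of $Q^\ell(\pi)$ onto $\mathcal{E}[Q_\ell]$, together with the fact that both $\mathcal{N}^{[\ell]}$ and $(\mathcal{N}^{[\ell]})^{\bot_{h_\ell}}$ sit inside $\mathcal{E}[Q_\ell]$, collapses this to the equality $\mathcal{N}^{[\ell]} = (\mathcal{N}^{[\ell]})^{\bot_{h_\ell}}$. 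Since $d_i = g$ gives $\mathrm{rk}\,\mathcal{N}^{[\ell]} = \ell g$ and $\mathrm{rk}\,\mathcal{E}[Q_\ell] = 2\ell g$, the subspace $\mathcal{N}^{[\ell]}$ and its $h_\ell$-orthogonal have the same rank $\ell g$; hence maximal isotropy is in turn equivalent to the one-sided inclusion $\mathcal{N}^{[\ell]} \subset (\mathcal{N}^{[\ell]})^{\bot_{h_\ell}}$, i.e.\ to $\mathcal{N}^{[\ell]}$ being totally isotropic for $h_\ell$.

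The only real content is the first displayed identity, which says that formation of the orthogonal is compatible with the quotient pairing $h_\ell$ via $Q^\ell(\pi)$; once it is in hand, the remainder is surjectivity of $Q^\ell(\pi)$ plus a rank count, and I do not expect any significant obstacle.
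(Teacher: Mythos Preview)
Your proof is correct and follows essentially the same approach as the paper's. Both proofs rest on the single identity relating the two orthogonals via $Q^\ell(\pi)$: the paper states it as $(\mathcal{N}^{[\ell]})^{\bot_{h_\ell}} = Q^\ell(\pi)\,(\mathcal{N}^{[\ell]})^\bot$, while you state the equivalent preimage version $(\mathcal{N}^{[\ell]})^\bot = (Q^\ell(\pi))^{-1}\bigl((\mathcal{N}^{[\ell]})^{\bot_{h_\ell}}\bigr)$; the paper then leaves the passage from ``totally isotropic'' to the equality $\mathcal{N}^{[\ell]} = (\mathcal{N}^{[\ell]})^{\bot_{h_\ell}}$ implicit, whereas you spell out the rank count, which is a nice addition.
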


\begin{proof}
To say that the filtration is compatible with the pairing amounts to say that for every $1 \leq \ell \leq e$, one has ${\mathcal{N}^{[\ell]}}^\bot = (Q^\ell(\pi))^{-1}  \mathcal{N}^{[\ell]}$. Since the orthogonal of $\mathcal{N}^{[\ell]}$ for $h_\ell$ is $Q^\ell(\pi) {\mathcal{N}^{[\ell]}}^\bot$, the result follows.
\end{proof}

\begin{rema} \label{isot}
In special fiber, the situation is again quite simpler. In this case, one has simply $\mathcal E [Q_{\ell}] = \mathcal E[\pi^\ell] = \pi^{e-\ell} \mathcal E $. The pairing $h_{\ell}$ on this sheaf is given by
$$ h_{ \ell} (\pi^{e - \ell} x, \pi^{e - \ell} y) = <\pi^{e - \ell} x , y> = <x, \pi^{e - \ell} y> $$
If $\mathcal F \subset \mathcal E[\pi^\ell]$ is totally maximally isotropic for $h_{ \ell}$, then its orthogonal in $\mathcal E$ is equal to
$$\mathcal F^\bot = (\pi^{e-\ell})^{-1} \mathcal F. $$
\end{rema}

\subsubsection{Application to $p$-divisible groups} 

Let $G \fleche S$ be a $p$-divisible group of height $h [F : \QQ_p]$, endowed with an $\mathcal O_L$-action. Thus, we can decompose $\omega_G$, a locally free $\mathcal O_S$-module into
\[ \omega_G = \bigoplus_{ \tau \in \mathcal T} \omega_{G,\tau}.\]
Assume that $\omega_{G,\tau}$ is locally free of rank $p_\tau$, and suppose given for all $\tau$ integers
\[ d_\tau^1,\dots,d_\tau^e,\]
such that $d_\tau^i \leq h$ for all $\tau,i$, and $d_\tau^1 + \dots + d_\tau^e = p_\tau$ for all $\tau$. Denote $f = [L^{ur}:\QQ_p]$,so that $\Ht(G) = efh$.
Define $\mathcal H := \mathcal H(G) := H^1_{dR}(G/S) := \mathbb D(G)_{S \fleche S}$ the evaluation of the crystal of $G$ (\cite{BBM}) on $S$. This is a locally free $\mathcal O_S \otimes_{\ZZ_p} \mathcal O_L$-module of rank $h$, which moreover splits as
\[ \mathcal H = \bigoplus_{\tau \in \mathcal T} \mathcal H_\tau,\]
and for each piece, there is an exact sequence given by the Hodge filtration,
\[ 0 \fleche \omega_{G,\tau} \fleche \mathcal H_\tau \fleche \omega_{G^D,\tau}^\vee \fleche 0.\]

\begin{defin}
\label{defPRG}
A Pappas-Rapoport datum for $G$, with respect to $L,(\sigma_{\tau,j}),(d_\tau^{j})_{\tau \in \mathcal T,j}$, is the datum of, for all $\tau$, 
of a full Pappas-Rapoport datum for $(\mathcal H_\tau,\omega_{G,\tau})$; i.e.
a filtration by locally direct $\mathcal O_S$-factors
\[ 0 = \omega_{G,\tau}^{[0]} \subset \omega_{G,\tau}^{[1]}  \subset \dots \subset \omega_{G,\tau}^{[e-1]} \subset \omega_{G,\tau}^{[e]} = \omega_{G,\tau} \subset \omega_{G,\tau}^{[e+1]} \subset \dots \subset \omega_{G,\tau}^{[2e-1]}  \subset\omega_{G,\tau}^{[2e]} = \mathcal H_\tau,\]

satisfying
\begin{enumerate}
\item For $j = 1,\dots,e$, $\dim_{\mathcal O_S} (\omega_{G,\tau}^{[j]}/\omega_{G,\tau}^{[j-1]}) = d_\tau^j$,
\item For $j = 1,\dots,e$ $\dim_{\mathcal O_S} \omega_{G,\tau}^{[e+j]}/\omega_{G,\tau}^{[e+j-1]} = h- d_\tau^{e-j+1}$
\item 
For $j = 1,\dots,e$, $([\pi] - \sigma_{\tau,j}(\pi) (\omega_{G,\tau}^{[j]}) \subset \omega_{G,\tau}^{[j-1]}$,
\item For $j = 1,\dots,e$, $  ([\pi]-\sigma_{\tau,e}(\pi))\dots([\pi] - \sigma_{\tau,e+j-1}(\pi)) \omega_{G,\tau}^{[e+j]} \subset \omega_{G,\tau}^{[e-j]}.$
\item For $j = 1,\dots,e$, $ \omega_{G,\tau}^{[e+j]} =(Q^{e-j} (\pi)  )^{-1} \omega_{G,\tau}^{[e-j]}.$
\end{enumerate}
\end{defin}

\begin{defin}
\label{def29}
Let $H$ be a locally free $R$-module (of finite rank), $R$ a ring, and denote
\[ H \otimes H^\vee \fleche R,\]
the perfect pairing between $H$ and $H^\vee = \Hom_R(H,R)$. Let $F \subset H$ be a locally direct factor. The association
\[ F \longmapsto F^\perp := \Ker(H^\vee \twoheadrightarrow F^\vee),\]
is an inclusion reversing involution between locally direct factors of $H$ and $H^\vee$.
\end{defin}
Thus by proposition \ref{propdual} we have

\begin{prop}
\label{propPRdual}
Let $(\omega_{G,\tau}^{[i]})_{i=0,\dots,2e}$ be a full Pappas-Rapoport datum for $G$, with respect to $L,(\sigma_{\tau,j})_{(\tau,j)},(d_\tau^j)_{(\tau,j)}$.
Then $((\omega_{G,\tau}^{[2e-i]})^\perp)_{i=0,\dots,2e}$ induces a full Pappas-Rapoport datum for $G^D$, with respect to $L,(\sigma_{\tau,j})_{\tau,j},(h-d_\tau^j)_{(\tau,j)}$.
Here $(.)^\perp$ denotes the previous involution under the identification $\mathcal H(G^D) = \mathcal H(G)^\vee$ (which satisfies $\omega_{G}^\perp = \omega_{G^D}$).
\end{prop}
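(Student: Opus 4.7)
The strategy is to reduce to Proposition \ref{propdual}. Set $\tilde\omega^{[i]} := (\omega_{G,\tau}^{[2e-i]})^\perp$ in $\mathcal H(G^D)_\tau \simeq \mathcal H_\tau^\vee$. By the stated identification $\omega_G^\perp = \omega_{G^D}$, the extreme terms are immediate: $\tilde\omega^{[0]} = 0$, $\tilde\omega^{[e]} = \omega_{G^D,\tau}$, $\tilde\omega^{[2e]} = \mathcal H(G^D)_\tau$. I would first observe that the sub-filtration $(\tilde\omega^{[j]})_{j=0}^{e}$ coincides with the PR datum for $\omega_{G^D,\tau} = (\mathcal H_\tau/\omega_{G,\tau})^\vee$ in $\mathcal H(G^D)_\tau$ produced by Proposition \ref{propdual} applied to $G$, with respect to the collection $(\sigma_{\tau,j}, h-d_\tau^j)_j$: indeed, the construction there identifies the induced filtration on $\mathcal M=(\mathcal E/\mathcal N)^\vee$ with the orthogonals $(\omega_{G,\tau}^{[2e-j]})^\perp$ under the embedding $\mathcal M = \omega_{G,\tau}^\perp \hookrightarrow \mathcal H_\tau^\vee$. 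This grants PR conditions (1) and (3) for $G^D$ on the first half. If I then show that the entire $\tilde\omega$ filtration is exactly the complete filtration attached to this first half by the duality formula of section \ref{sect232}, then the remaining conditions (2), (4), (5) of Definition \ref{defPRG} for $G^D$ follow automatically, via the lemma preceding Proposition \ref{propdual} applied to the new datum.

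So everything reduces to checking condition (5) for $G^D$, i.e.\ $\tilde\omega^{[2e-\ell]} = (Q^\ell(\pi))^{-1}\tilde\omega^{[\ell]}$ for $1 \le \ell \le e-1$, which in terms of the original filtration reads
\[(\omega_{G,\tau}^{[\ell]})^\perp = (Q^\ell(\pi))^{-1}\bigl((\omega_{G,\tau}^{[2e-\ell]})^\perp\bigr).\]
Since the $\pi$-action on $\mathcal H(G^D)_\tau = \mathcal H_\tau^\vee$ is by transpose, the right-hand side equals $(Q^\ell(\pi)\omega_{G,\tau}^{[2e-\ell]})^\perp$, and the statement is equivalent to the equality of submodules
\[Q^\ell(\pi)\,\omega_{G,\tau}^{[2e-\ell]} = \omega_{G,\tau}^{[\ell]}\]
inside $\mathcal H_\tau$. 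The inclusion $\subseteq$ is immediate from condition (5) for $G$, which says $\omega_{G,\tau}^{[2e-\ell]} = (Q^\ell(\pi))^{-1}\omega_{G,\tau}^{[\ell]}$.

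The main point, and what I expect to be the subtle step, is the reverse inclusion $\supseteq$. I would combine two ingredients. First, iterating condition (3) for $G$ yields $Q_\ell(\pi)\omega_{G,\tau}^{[\ell]}=0$, so $\omega_{G,\tau}^{[\ell]} \subset \mathcal H_\tau[Q_\ell]$. Second, because $\mathcal H_\tau$ is locally free over $\mathcal O_S[T]/(Q(T))$ and $Q = Q_\ell Q^\ell$, the elementary polynomial identity used in the remark before Corollary \ref{proppolQ} gives $\mathcal H_\tau[Q_\ell] = Q^\ell(\pi)\mathcal H_\tau$. Hence every $x \in \omega_{G,\tau}^{[\ell]}$ is of the form $Q^\ell(\pi)y$ for some $y \in \mathcal H_\tau$; but then $y$ automatically lies in $(Q^\ell(\pi))^{-1}\omega_{G,\tau}^{[\ell]} = \omega_{G,\tau}^{[2e-\ell]}$ by condition (5) for $G$, and so $x \in Q^\ell(\pi)\omega_{G,\tau}^{[2e-\ell]}$. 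This closes the argument; everything outside of this surjectivity step is routine bookkeeping with orthogonals and the transpose $\pi$-action.
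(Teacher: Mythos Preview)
Your proof is correct and follows the same route as the paper, which simply asserts that Proposition~\ref{propPRdual} is a consequence of Proposition~\ref{propdual} without further detail. You have supplied the verification that the paper leaves implicit: that the orthogonal filtration $((\omega_{G,\tau}^{[2e-i]})^\perp)_i$ really is the \emph{complete} filtration attached to the PR datum on $\omega_{G^D,\tau}$ furnished by Proposition~\ref{propdual}, the key step being the equality $Q^\ell(\pi)\,\omega_{G,\tau}^{[2e-\ell]} = \omega_{G,\tau}^{[\ell]}$ deduced from condition~(5) together with $\mathcal H_\tau[Q_\ell] = Q^\ell(\pi)\mathcal H_\tau$.
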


\begin{defin}
\label{def218}
Suppose we are given an extension $L/L^+$ of\footnote{We allow $L = L^+\times L^+$ } degree $\leq 2$, and let $s \in \Gal(F/F^+)$.
Suppose we are given a polarisation $\lambda : G \overset{\sim}{\fleche} (G^D)^{(s)}$. Then we say that a Pappas-Rapoport datum $\mathcal R$ for $G$ is compatible with $\lambda$, if under the isomorphism $\lambda : \mathcal H(G) \overset{\sim}{\fleche} \mathcal H(G^D)^{(s)}$, the datum $\mathcal R$ and $(\mathcal R^\perp)^{(s)}$ (given by Proposition \ref{propPRdual} and twist by $s$) coïncides. In particular, this implies that 
\[ d_{\tau,i}(\mathcal R) = h-d_{s(\tau),i}(\mathcal R).\]
\end{defin}

\begin{rema}
\label{remPRAR}
In the situation when primes ramifies further in $L/L^+$, the previous compatibility is unfortunately impossible to achieve (except possibly in reduced special fiber).
For example let $L^+ = \QQ_p$ and $L = \QQ_p[T]/(E(T))$ a quadratic extension in which $p = (\pi)^2$ ramifies. Denote by $c(\pi) = \overline \pi$ the conjugate uniformizer.
Thus a Pappas-Rapoport datum for $G/S$, $L$, $(\pi,\overline\pi)$ and $d \leq h$, is the datum of
\[ 0 \subset \omega^{[1]} \subset \omega_G \subset \mathcal F^{[1]} \subset \mathcal H,\]
such that $\mathcal F^{[1]} = (T-\overline{\pi})^{-1}\omega^{[1]}$. The associated datum of $(G^D)^{(s)}$ is
\[ 0 \subset (\mathcal F^{[1]})^{\perp,s} \subset \omega_{G^D}^{(s)} \subset (\omega^{[1]})^{\perp,(s)} \subset H^{\vee,(s)}.\]
But as $T$ acts on $\mathcal H/\mathcal F^{[1]}$ as $\pi$, and thus on $(\mathcal F^{[1]})^{\perp,s}$ as $\overline\pi$. Moreover $(\mathcal F^{[1]})^{\perp,c}$ is of rank $h-d_1$ when $\omega^{[1]}$ is of rank $d_1$. There is thus no chance that given an isomorphism
\[ \lambda : G \overset{\sim}{\fleche} G^{D,(s)},\]
our Pappas-Rapoport datum is $\lambda$-compatible (except if $\pi = \overline\pi$ on $S$, for example in special fiber, and $2d_1 = h$)... 
We will refer to this as the case (AR) in the rest of the text.
\end{rema}

\subsection{Pappas-Rapoport models}
\label{sect24}
Let $\mathcal O_B$ be a $\ZZ_{(p)}$-order in $B$, preserved by $\star$, such that its completion is a maximal $\ZZ_p$-order in $B\otimes_{\QQ} \QQ_p$, and $(L,<.,.>)$ be a PEL $\mathcal O_B$-lattice \todo{$\mathcal O$ pas défini $\mathcal O = \mathcal O_B$ $\Box$} (see \cite{Lan} Definition 1.2.1.3) such that $(L,<.,.>)\otimes_{\mathbb Z} \QQ = (V,<.,.>)$.\todo{produit tensoriel sur  $\mathbb{Z}$ ! $\Box$} 
Suppose moreover Hypothesis \ref{hyp1}, that is to say $B_{\QQ_p}$ is isomorphic to a product of matrix algebras over (necessarily finite) extensions of $\QQ_p$. Note that we do not 
suppose that the extensions are unramified. Now suppose that $p$ is a \textit{good} prime, in the sense that $p  \nmid [L^\sharp:L].$\footnote{this is less strong than Lan's definition \cite{Lan}, 1.4.1.1, as we actually want to define a moduli problem for primes ramified in $\mathcal O$, but we, as Lan, assume that $p$ does not contribute to the level (implicitly as there will be no level at $p$), and exclude factors of type $D$.} This assumption will remain in force during all this article.

Let $K$ be an extension of $E_\nu$ (with $E$ the relfex field) which contains $F_i^{gal}$ for all $i$. We will want to consider a moduli problem $X$ over $\mathcal O_K$ of 
associating to $S$ quintuples $(A,\lambda,i,\eta,\omega^{[\cdot]})$ up to $\ZZ_{(p)}^\times$-isogenies, where $\omega^{[\cdot]}$ will be a Pappas-Rapoport datum with respect to a combinatorial data $\mathcal C$ which we now explain.

First, suppose given a abelian scheme $A$ over $S$ (itself over $\mathcal O_K$) endowed with an action $i : \mathcal O_B \fleche End(A) \otimes_\ZZ (\ZZ_{(p)})_S$. Decompose $\omega_A$ as before so we get a collection $(\omega_{i})_{i=1,\dots,r}$, and now the action of $\mathcal O_{F_i^{ur}}$ on $\omega_i$ can be splitted (as $S$ is over $\mathcal O_K$) as,
\[ \omega_i = \bigoplus_{\tau \in \mathcal T_i} \omega_{i,\tau},\]
where $\mathcal T_i = \Hom(F_i^{ur},\CC_p)$. Unfortunately if $F_i$ is ramified, we can't further decompose the $\omega_{i,\tau}$ as we did over $\QQ_p$. 

Denote by $\Sigma_{i,\tau}$ the subset of $\mathcal T'_i := \Hom(F_i,\CC_p)$ of embedding $\tau'$ that induces $\tau$ when restricted to $F^{ur}_i$.
Let us denote, for all $i \in \{1,\dots,r\}$, $\pi_i$ a chosen uniformiser of $F_i/F_i^{ur}$, $Q_i$ a corresponding Eisenstein polynomial, and let us choose an ordering $\Sigma_{i,\tau} = \{\sigma_{i,\tau,1},\dots,\sigma_{i,\tau,e_i}\}$ for the elements of $\Sigma_{i,\tau}$, for all $i,\tau$, where $e_i = [F_i:F_i^{ur}]$, (it corresponds to an ordering of the conjugate roots of $\tau(Q_i)$). 
This induces a bijection
\[ \begin{array}{ccc}
  \{1,\dots,e_i\} 
& \overset{\sigma_\bullet}{\fleche}  & \Sigma_{i,\tau}  \\
 j & \longmapsto  & \sigma_{i,\tau,j}  
\end{array},
\]
and a numbering $(d_{i,\tau,j})_{j = 1,\dots,e_i}$ such that $\{ d_{i,\tau,j} : j = 1,\dots,e_i\} = \{ d_{i,\tau'} : \tau' \in \Sigma_{i,\tau}\}$, by setting 
\[ d_{i,\tau,j} = d_{i,\sigma_{i,\tau,j}},\]
where $d_{i,\sigma_{i,\tau,j}}$ is defined in section \ref{sect2.2}. We suppose moreover, to reflect remark \ref{remadpol}, that the choice of these bijections \todo{$s$ pas défini ? A préciser $\Box$} implies that, for all $i,\tau,j$,
\[ d_{i,\tau,j} = h_i-d_{s(i,\tau),j},\]
where as before $s$ is the action induced by $\star$, the involution on $B$, on $\prod_i \Hom(F_i,\CC_p)$.
To ease the notation, we will write $$\mathcal{C} = ((\sigma_{i,\tau,j})_{i,\tau,j},(d_{i,\tau,j})_{i,\tau,j})$$ and $\mathcal{C}_i= ((\sigma_{i,\tau,j})_{\tau,j},(d_{i,\tau,j})_{\tau,j})$ for every $i$. One will also write $\mathcal{C}^D = ((\sigma_{i,\tau,j} \circ s)_{i,\tau,j},(h_i - d_{i,\tau,j})_{i,\tau,j})$ and $\mathcal{C}^D_i= ((\sigma_{i,\tau,j} \circ s )_{\tau,j},h_i -(d_{i,\tau,j})_{\tau,j})$.
We can moreover, if we denote $\mathcal H = H^1_{dR}(A/S)$, decompose
\[ \mathcal H = \bigoplus_{i} \mathcal H_{i},\]
with $\mathcal H_i$ a $M_{n_i}(\mathcal O_{F_i})$-module, corresponding by Morita equivalence to $e_i\mathcal H_i$, which we can further decompose
\[ e_i\mathcal H_i = \bigoplus_{\tau : F_i^{ur} \fleche \overline\QQ_p} \mathcal H_{i,\tau}.\]

Remark that if $S$ is actually over $\QQ_p$, then,
\[ \omega_{i,\tau} = \bigoplus_{\tau' \in \Sigma_{i,\tau}} \omega_{i,\tau'},\]
and using the previous decomposition for $\omega$, we get $\omega_{i,\tau}^{[\cdot]}$ a filtration of $\omega_i$, by,
\[ \omega_{i,\tau}^{[j]} = \bigoplus_{r = 1}^j \omega_{i,\sigma_{i,\tau,r}}.\]

\begin{example}
As explained in \cite{KotJams}, when $C$ is a semi-simple algebra over a field (say of characteristic $p$), then a $C$-module $V$ is determined by its determinant $\det_V$ associated by Kottwitz. Unfortunately in the ramified case $\mathcal O_B\otimes \FP$ is no longer semi-simple and the determinant fails to determine $V$. For example say
$B_{\QQ_p} = F$ a totally ramified extension of $\QQ_p$ of degree $e$, and thus $\mathcal O_B \otimes \FP = \FP[X]/(X^e)$. Let $V = \FP[X]/(X^e)$ and $W = \FP^e$.
Then $\det_V(T_1,\dots,T_e) = \det_W(T_1,\dots,T_e) = T_1^e$ but obviously $V$ and $W$ are not isomorphic as $\FP[X]/(X^e)$-modules.
\end{example}


Now suppose that $\lambda$ is a $\ZZ_p\times$-polarisation on $A$, and that $i$ is a $\mathcal O_B$ structure for $(A,\lambda)$ (i.e. satisfies the Rosatti Condition, cf \cite{Lan} Definition 1.3.3.1). Using this $\lambda$ we deduce an isomorphism
\[ e_i\mathcal H_i \simeq e_{s(i)}\mathcal H_{s(i)}^\vee,\]
coming from an isomorphism of the associated $p$-divisible group when decomposing $\mathcal A[p^\infty]$, and we can thus apply the Definition \ref{def218} for $F_i/F_i^+ = F_i^{\star = 1}$ (when $s(i) = i$) or $F_i \times F_{s(i)}$ when $i \neq s(i)$ (but beware of the case (AR) as explained in Remark \ref{remPRAR}).

\begin{defin}[\cite{P-R} Sections 14 and 9]
\label{defPRmoduli}
Let $S$ be a $\mathcal O_K$-scheme. A Pappas-Rapoport datum for $(A,\lambda,\iota)/S$, with respect to $\mathcal C$ is the datum, for every $(i,\tau)$, of
\begin{itemize}
\item  if $i$ does not fall in case AR\footnote{This will be explained later, here by falling in case AR we just mean that $F_i$ is stable by $\star$ and the extension $F_i/F_i^{\star = 1}$ is ramified.}, a full Pappas-Rapoport datum for $(\mathcal H_{i,\tau},\omega_{i,\tau})$ 
associated to
$\mathcal{C}_i$, as in definition \ref{defPRG},
 that is moreover $\lambda$-compatible with $\mathcal C_{s(i)}^D$ (see definition \ref{def218}), \todo{Précision sur ce que ça veut dire ?}
 \item if $i$ falls in case AR\footnote{As explain in Remark \ref{remPRAR}, we cannot ask for $\lambda$-compatibility here. Unfortunately this moduli space will not be studied in much details here, but we will show that our two main theorems fail in this AR case.}, we ask 
for a full Pappas-Rapoport datum $\mathcal R$ for $(\mathcal H_{i,\tau},\omega_{i,\tau})$ with respect to $\mathcal{C}_i$, 
(this automatically induces a full Pappas-Rapoport datum $\mathcal R^{\perp,(s)}$ for $(\mathcal H_{i,\tau}^{D,s},\omega_{A^D,i,\tau)}^{(s)}) = (\mathcal H_{i,\tau},\omega_{A,i,\tau)})$ with respect to 
$\mathcal{C}_i^D$).
\end{itemize}
%
\end{defin}

Consider the moduli problem $X$ over $\mathcal O_K$, associating to $S$ quintuples $(A,\lambda,i,\eta,\omega^{[\cdot]})$ up to $\ZZ_{(p)}^\times$-isogenies, where 
\begin{itemize}
\item $A \fleche S$ is an abelian scheme,
\item $\lambda : A \fleche {^t}A$ is a $\ZZ_{(p)}^\times$-polarisation,
\item $i : \mathcal O_B \fleche End(A) \otimes_\ZZ (\ZZ_{(p)})_S$ is a $\mathcal O_B$-structure of $(A,\lambda)$ (in particular $\star$ induces the Rosatti involution)
\item $\eta$ is a rational level structure outside $p$  (see \cite{Lan} section 1.4.1)
\item $\omega^{[\cdot]}$ is a Pappas-Rapoport datum for $\mathcal C$, which is defined in Definition \ref{defPRmoduli}. \todo{Pas très claire, la définition arrive bien après $\Box$}
\end{itemize}

\begin{prop} The moduli space $X$ associating to each $S$ over $\Spec(\mathcal O_K)$ the set of isomorphism classes of quintuples $(A,\lambda,i,\eta,\omega^{[\cdot]})$ is representable by a quasi-projective scheme.
\end{prop}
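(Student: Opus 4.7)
The plan is to realise $X$ as a closed subscheme of a relative flag scheme that is projective over an auxiliary PEL moduli space already known to be representable. The main obstacle is the representability of this auxiliary space directly over $\Spec(\mathcal O_K)$, where no clean Kottwitz-style determinant condition is available in ramified mixed characteristic; once this is granted, the remainder reduces to standard closed-condition arguments in an iterated Grassmannian.

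First, let $Y$ denote the moduli problem over $\Spec(\mathcal O_K)$ parameterising, up to $\ZZ_{(p)}^\times$-isogeny, quadruples $(A,\lambda,\iota,\eta)$ satisfying all the conditions in the definition of $X$ save the Pappas-Rapoport datum itself, together with the rank requirement that for each pair $(i,\tau)$ the sheaf $\omega_{A,i,\tau}$ coming from the Morita decomposition is locally free of rank $\sum_{j=1}^{e_i} d_{i,\tau,j}$. Representability of $Y$ by a quasi-projective $\mathcal O_K$-scheme follows from Mumford's construction of moduli of polarised abelian schemes with prime-to-$p$ neat level structure, combined with the standard closed-condition arguments for imposing an $\mathcal O_B$-endomorphism structure compatible with Rosati (\cite{Lan}, Chapter~1); the rank condition on each coherent direct summand $\omega_{A,i,\tau}$ is open-and-closed on a flat family, so it merely singles out a union of connected components.

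Second, over $Y$ one has the locally free sheaves $\mathcal H_{i,\tau} := e_i \mathcal H_i$ coming from the de~Rham cohomology of the universal abelian scheme, each endowed with a commuting $\mathcal O_{F_i}$-action, and $\omega_{A,i,\tau}$ sits inside $\mathcal H_{i,\tau}$ as a locally direct factor of the prescribed rank. Introduce the relative scheme $\mathcal F\ell \to Y$ parameterising, for every pair $(i,\tau)$, a complete filtration
\[ 0 = \omega^{[0]}_{i,\tau} \subset \omega^{[1]}_{i,\tau} \subset \cdots \subset \omega^{[2e_i]}_{i,\tau} = \mathcal H_{i,\tau} \]
with $\omega^{[e_i]}_{i,\tau} = \omega_{A,i,\tau}$ and with each graded piece locally free of the rank prescribed by the combinatorial datum $\mathcal C$. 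The scheme $\mathcal F\ell$ is an iterated Grassmannian bundle over $Y$, hence projective over $Y$.

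Third, each of the Pappas-Rapoport conditions from Definitions~\ref{defPRG} and~\ref{defPRmoduli}---the $\mathcal O_{F_i}$-stability of the steps, the shifted $\pi$-stability $([\pi]-\sigma_{i,\tau,j}(\pi))\omega^{[j]}_{i,\tau} \subset \omega^{[j-1]}_{i,\tau}$, and the duality identities $\omega^{[e_i+j]}_{i,\tau} = (Q^{e_i-j}(\pi))^{-1}\omega^{[e_i-j]}_{i,\tau}$---is cut out by the vanishing of an explicit morphism between locally free sheaves of finite rank over $\mathcal F\ell$, and so defines a closed subscheme. In the non-(AR) cases, the $\lambda$-compatibility of Definition~\ref{def218} demands the agreement of two filtrations on $\mathcal H_{i,\tau}$ (the chosen one, and the one obtained from Proposition~\ref{propPRdual} by transport under the polarisation isomorphism), which amounts to the equality of two $Y$-points of a flag scheme and is therefore also a closed condition; in the (AR) cases, only the PR datum itself is imposed, and this is again closed. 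Intersecting these finitely many closed subschemes exhibits $X$ as a closed subscheme of $\mathcal F\ell$. Since $\mathcal F\ell$ is projective over $Y$ and $Y$ is quasi-projective over $\Spec(\mathcal O_K)$, we conclude that $X$ is quasi-projective over $\Spec(\mathcal O_K)$.
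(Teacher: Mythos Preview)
Your proof is correct and follows essentially the same strategy as the paper: both realise $X$ as a closed subscheme of a relative (partial) flag variety over an auxiliary PEL moduli space known to be quasi-projective. The paper's version is terser---it cites \cite{P-R} and takes Kottwitz's model as the base, noting that the forgetful map is relatively representable with fibres closed in a product of Grassmannians for $\omega_{A^{\mathrm{univ}}}$---whereas you spell out the closed conditions explicitly and build the flag inside all of $\mathcal H_{i,\tau}$ rather than just $\omega_{A,i,\tau}$ (the extra steps being redundant, since they are determined by the first $e_i$ via the duality formula, but harmless).
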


\begin{proof}
This is shown in a local context in \cite{P-R} : the morphism who forgets the Pappas-Rapoport datum is relatively representable over the (PEL) moduli space (Kottwitz's model).
Thus $X$ is representable, locally fibrered over Kottwitz's model as a closed subset of a product of Grassmanian (equivalently it is a closed subset of some partial flag variety for $\omega_{A^{univ}}$ over Kottwitz's model).
\end{proof}

\begin{prop}
Let $K^p \subset G(\mathbb A_f^p)$ be a compact open subgroup as before (neat). Let $\mathfrak C \subset G(\QQ_p)$ be the stabilizer of $L$, and consider the compact open $\mathfrak CK^p$. Let us choose $K$ a $p$-adic completion of the Galois closure of the $F_i$ as before. Then the Pappas-Rapoport model $X/\Spec(\mathcal O_K)$ coincides with $\mathcal S_{\mathfrak CK^p}$ over $K$, i.e. $X$ is an integral model of $\mathcal S_{\mathfrak CK^p}$ over $\mathcal O_{K}$.
\end{prop}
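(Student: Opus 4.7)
The forgetful morphism $X \to \mathcal S_{\mathfrak{C}K^p}$ dropping the Pappas-Rapoport filtration is defined over $\Spec(\mathcal O_K)$ in the evident way (a quintuple restricts to a quadruple, and the determinant condition on the latter is implied by axiom (1) on the ranks of the successive quotients $\omega_{i,\tau}^{[j]}/\omega_{i,\tau}^{[j-1]}$). The plan is to show that, over the generic fiber, this morphism is an isomorphism, by constructing a canonical inverse.

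The key input is that, because $K$ contains $F_i^{gal}$ for every $i$, the algebra $F_i \otimes_{\QQ_p} K$ splits as $\prod_{\tau' \in \Hom(F_i,K)} K$. Consequently, for any $K$-scheme $S$ and any PEL quadruple $(A,\lambda,\iota,\eta)$ satisfying the determinant condition, the eigendecomposition already recalled at the end of Section \ref{sect24} gives $\omega_{i,\tau} = \bigoplus_{\tau'\in\Sigma_{i,\tau}} \omega_{i,\tau'}$ with $\omega_{i,\tau'}$ locally free of rank $d_{i,\tau'}=d_{i,\tau,j}$ (where $\tau'=\sigma_{i,\tau,j}$). Using the ordering of $\Sigma_{i,\tau}$ fixed once and for all in the combinatorial datum $\mathcal C$, define
\[ \omega_{i,\tau}^{[j]} := \bigoplus_{r=1}^{j} \omega_{i,\sigma_{i,\tau,r}}, \qquad 1 \leq j \leq e_i.\]
The three axioms of a Pappas-Rapoport datum for $\omega_{i,\tau}$ are then automatic: the ranks of the graded pieces are $d_{i,\tau,j}$ by construction; each $\omega_{i,\tau}^{[j]}$ is a direct summand as a sum of direct summands; and on the eigenspace $\omega_{i,\sigma_{i,\tau,r}}$ the operator $[\pi]$ acts by the scalar $\sigma_{i,\tau,r}(\pi)$, so $([\pi]-\sigma_{i,\tau,j}(\pi))$ annihilates $\omega_{i,\tau}^{[j]}/\omega_{i,\tau}^{[j-1]}$. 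Extending to a full datum via the formula $\omega_{i,\tau}^{[2e_i-\ell]}=(Q^\ell(\pi))^{-1}\omega_{i,\tau}^{[\ell]}$ of Section \ref{sect232} produces the required filtration on $\mathcal H_{i,\tau}$; equivalently, it is the filtration obtained from the analogous eigendecomposition of $\mathcal H_{i,\tau}$ on the second half.

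Functoriality of the eigendecomposition in $(A,\lambda,\iota,\eta)$ makes this construction functorial in $S$, so it defines a section $\mathcal S_{\mathfrak CK^p,K}\to X_K$ of the forgetful map. That this section is also a retraction, i.e. that on $X_K$ any Pappas-Rapoport datum $(\omega_{i,\tau}^{[j]})$ coincides with the canonical one, follows from axiom (3): it forces $\omega_{i,\tau}^{[j]}/\omega_{i,\tau}^{[j-1]}$ to land in the $\sigma_{i,\tau,j}(\pi)$-eigenspace of $[\pi]$; equality of ranks (both equal to $d_{i,\tau,j}$) forces equality of summands, which by induction on $j$ recovers the eigenspace filtration. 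The polarisation part reduces, in cases (C), (AL), (AU), to verifying that the eigendecomposition is $\lambda$-compatible in the sense of Definition \ref{def218}, which is immediate since under $\lambda$ the $\sigma$-eigenspace of $[\pi]$ on $\mathcal H_{i,\tau}$ is sent to the $\sigma\circ s$-eigenspace on $\mathcal H_{s(i),\tau}^\vee$; in case (AR) no such compatibility is imposed, and Proposition \ref{propPRdual} already guarantees the dual datum exists.

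The main conceptual point is therefore the splitting of $F_i\otimes_{\QQ_p} K$, which reduces the whole integrality problem of Section \ref{sect23} to an elementary linear-algebraic statement in characteristic zero; once this is established, the only subtlety is the bookkeeping for the involution $s$ and the ramified case (AR), where, as emphasised in Remark \ref{remPRAR}, the $\lambda$-compatibility is avoided by the definition itself so that the argument goes through unchanged.
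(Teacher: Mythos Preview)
Your argument is essentially the paper's, only spelled out in more detail: both construct the canonical section over $K$ from the eigendecomposition $\omega_{i,\tau}=\bigoplus_{\tau'}\omega_{i,\tau'}$ afforded by the splitting of $F_i\otimes_{\QQ_p}K$, and both observe that the determinant condition follows from the rank constraints in the Pappas--Rapoport axioms. Your extra care with uniqueness of the datum in characteristic zero and with the $\lambda$-compatibility is welcome and correct.

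Two small points are missing or imprecise. First, the forgetful map cannot be ``defined over $\Spec(\mathcal O_K)$'' as you write, since $\mathcal S_{\mathfrak CK^p}$ is only defined over $E$ (hence over $K$ after base change); the comparison takes place on the generic fiber only, which you later say anyway. Second, and more substantively, the two moduli problems are not literally the same even over $K$: $\mathcal S_{\mathfrak CK^p}$ classifies quadruples up to \emph{quasi}-isogeny (Section~\ref{sect2.2}), whereas $X$ classifies quintuples up to $\ZZ_{(p)}^\times$-isogeny (Section~\ref{sect24}). Before your forgetful map and its inverse make sense you need the equivalence between these two formulations in characteristic zero; the paper invokes \cite{Lan} Proposition~1.4.3.4 for this, and you should too.
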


\begin{proof}
Obviously if $(A,i,\lambda,\eta)$ is a quadruple in $\mathcal S_{\mathfrak CK^p}(S)$, where $S$ is over $K$, there is a canonical filtration of $\omega_A$, as explained before, as we have fixed the bijections $\sigma_\bullet$.
Moreover, each quintuple $(A,i,\lambda,\eta,\omega^{[\cdot]})$ satisfies the Kottwitz's determinant condition as the filtration given by $\omega^{[\cdot]}$ on $\omega_{i,\tau}$ is split, and the dimensions are fixed by the Pappas-Rapoport condition. The equivalence beetwen definition by $\ZZ_{(p)}^\times$-isogeny classes and quasi-isogeny classes (in characteristic 0) is then \cite{Lan} Proposition 1.4.3.4.
\end{proof}

From now on fix a level $K^p \subset G(\mathbb A_f^p)$ outside $p$  and $\mathfrak C$ as before at $p$ ("without level at $p$"), call $X$ the Pappas-Rapoport model over $\mathcal O_{K}$
of the Shimura variety $\mathcal S_{\mathfrak CK^p}$. It thus make sense to reduce $X$ over $\kappa$, the residue field of $K$. The goal of this article is to study the geometry of $X_\kappa := X \times \Spec(\kappa)$.

\subsection{Polygons}
\label{sectdecpol}
As explained in the previous section, we can decompose the lie algebra $\omega_A$ of the universal abelian scheme $A$ over $X$ through the action of $\mathcal O_B\otimes \ZZ_p$. Actually, we can also decompose the $p$-divisible group $A[p^\infty]$. According to Hypothesis \ref{hyp1}, we write,
\[ \mathcal O_B \otimes \ZZ_p = \prod_{\pi \in \mathcal P} M_{n_\pi}(R_\pi),\]
where $\pi \in \mathcal P$ is a new indexation for $i \in \{1,\dots,r\}$ where the two factors $i, s(i)$, exchanged by $\star$, share the same index $\pi$, and, \todo{pas très clair $\Box$ ?}
$R_\pi = \mathcal O_{F_\pi}$ if $i = s(i)$, or $R_\pi = \mathcal O_{F_\pi} \times \mathcal O_{F_\pi}$ if there are two factors (i.e. $R_\pi = \mathcal O_{F_i} \times \mathcal O_{F_{s(i)}}$ if $\pi = [i]$ when $i \neq s(i)$). Thus $\mathcal P = \{1,\dots,r\}_{/\sim_s}$.
We refer to the $\pi \in \mathcal P$ as places over $p$ in $B$. We can thus decompose, 
\[ A[p^\infty] = \prod_{\pi \in \mathcal P} A[\pi^\infty],\]
where $A[\pi^\infty]$ is a $M_{n_\pi}(R_\pi)$-module $p$-divisible, and by Morita equivalence, 
\[ A[\pi^\infty] = \mathcal O_{R_\pi}^{n_\pi} \otimes_{\mathcal O_{R_\pi}} G_\pi.\]
Note that because $\lambda$, the (universal) polarisation of $A$ is compatible with $\star$, each factor $A[\pi^\infty]$ is still endowed with a polarisation $\lambda_\pi$ and thus also $G_\pi$.
We will use this decomposition of $A[p^\infty]$ all the time as, if we know the $n_\pi$, it is equivalent to know $A[p^\infty]$ (and $\lambda$) or the collection 
$(G_\pi)_\pi$ (and the $\lambda_\pi$).

For all $\pi$, $G_\pi$ is a polarized $p$-divisible group over $X$, the Pappas-Rapoport model, endowed with an action of $R_\pi$. 

Moreover, if $R_\pi = \mathcal O_{F_\pi} \times \mathcal O_{F_\pi}$ and $\star$ exchanges the two factors, we can further decompose $G_\pi = H_\pi \times H_\pi^D$ and 
$\lambda_\pi$ exchanges the two factors. In this case, called (Split) or (AL), the datum of $(G_\pi,\lambda_\pi)$ is equivalent to $H_\pi$.

Otherwise $G_\pi$ is a $p$-divisible $\mathcal O_{F_\pi}$-module with a polarisation $\lambda_\pi$ such that either,
\begin{itemize}
\item $\star$ induces the identity on $\mathcal O_{F_\pi}$, which is equivalent for $\lambda_\pi$ to be compatible with the $\mathcal O_{F_\pi}$-action, which we refer to as case (C).
\item $\star$ is an automorphism of order 2 of $\mathcal O_{F_\pi}$, and denote $\mathcal O_{F^+_\pi}$ the subfield fixed by $\star$. If $\mathcal O_{F_\pi}$ is unramified over 
$\mathcal O_{F_\pi^+}$, we refer to this case as (Inert) or (AU), and if the extension is ramified, as (Ram) or (AR). In this two cases $G_\pi$ satisfies the symmetry,
\[ G_\pi^D \overset{\lambda_\pi}{\simeq} G^{(c)}_\pi,\]
where $c$ is the order-two automorphism of $F_\pi$ induced by $\star$, and $G^{(c)}_\pi$ is the $p$-divisible $\mathcal O_{F_\pi}$-module where the endomorphism structure is $\iota^{(c)} := \iota \circ c$ if $\iota$ denotes the endomorphism structure of $G_\pi$.
\end{itemize}

\begin{rema}
The previous denomination comes from the possible decompositions of the $p$-divisible group of an abelian variety endowed with an action of the ring of integer of a totally real field (C), or a CM-field $F/F^+$ in which a place $\pi$ of $F^+$ splits in $F$ (AL), is inert in $F$ (AU) or is ramified in $F$ (AR), which itself is related to the classification of Lie algebras, symplectic (C) or unitary (A) (as we have excluded orthogonal factors (D)).
\end{rema}

From now on, we will fix a element $\pi \in \mathcal P$. For the rest of this subsection suppose our base scheme $S$ is a field $k$ over $\kappa$ (thus of characteristic $p$). 
Let us be more explicit about the different cases.

\subsubsection{Case C}

In the case $C$, we will denote the $p$-divisible group $G_\pi$ simply by $G$, and $F_\pi$ by $L$\footnote{We had also denoted $L$ the lattice in the definition of $X$, but as it will not appear anymore we hope it won't create any confusion...}. The $p$-divisible group $G$ has an action of $\mathcal O_L$, and a polarization. It has height $2dg$ and dimension $dg$, where $d$ is the degree of $L$ over $\mathbb{Q}_p$.  The sheaf $\omega_G$ decomposes as 
$$\omega_G = \bigoplus_{\tau \in \mathcal{T}} \omega_\tau$$
where $\mathcal T$ is the set of embeddings of $L^{ur}$.
Recall the Hodge filtration for $G$
$$0 \to \omega_G \to H^1_{dR} \to \omega_{G^D}^\vee \to 0,$$
where $G^D$ is the Cartier dual of $G$. This exact sequence splits according to the elements of $\mathcal{T}$. The PR condition for $G$ is then as follows. \\
For each $\tau \in \mathcal{T}$, one has a filtration 
$$\omega_\tau^{[0]} = 0 \subset \omega_\tau^{[1]} \subset \dots \subset \omega_\tau^{[e]} = \omega_\tau,$$
where $\omega_\tau^{[i]}$ is locally a direct factor of rank $gi$. Moreover, one has the following compatibility with the polarization:
$${\omega_\tau^{[i]}}^\bot = (\pi^{e-i})^{-1} \omega_\tau^{[i]}$$
this equality being taken in $H^1_{dR,\tau}$ for $1 \leq i \leq e$ (recall $\pi_i = 0$ in $S$ here).\\
For each $\tau \in \mathcal T$, one can define the polygon $Hdg_\tau(G)$; it is defined thanks to $\omega_\tau$ as in \cite{BH} Definition 1.1.7. It starts at $(0,0)$ and ends at $(2g,g)$. Since $G$ has a polarization, this polygon is symmetric: its slopes are $\lambda_1, \dots, \lambda_g, 1-\lambda_g, \dots, 1 - \lambda_1$. We define the polygon $Hdg(G)$ as the mean of the polygons $Hdg_\tau(G)$. \\
The polygons $PR_\tau$ and $PR$ are all equal: the have slope $0$ and $1$, each of them with multiplicity $g$. \\
We define the Newton polygon of $G$ as in \cite{BH} Def. 1.1.8 , and denote it $Newt(G)$; it is also symmetric.

\begin{prop}[\cite{BH} Théorème 1.3.1]   \todo{Amléliorer les références $\Box$}
 \label{prop218}
One has the inequalities
$$Newt(G) \geq Hdg(G) \geq PR,$$
and these polygons are all symmetric.
\end{prop}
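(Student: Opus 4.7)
The statement packages three assertions: $\Newt(G) \geq \Hdg(G)$, $\Hdg(G) \geq \PR$, and the symmetry of all three polygons. Since this is essentially $[BH, \text{Th\'eor\`eme } 1.3.1]$, my plan is to reduce each piece to a standard principle in the theory of $p$-divisible groups with extra structure, and to indicate where the \PR\ filtration enters.

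First I would dispatch the symmetry. The polarization $\lambda : G \simeq G^D$ is $\mathcal O_L$-linear, so it induces an $\mathcal O_L$-linear perfect alternating pairing on the Dieudonn\'e module compatible with Frobenius, and on $H^1_{dR}$ making $\omega_G$ Lagrangian; both pairings respect the $\mathcal T$-decomposition because $\mathcal O_{L^{ur}}$ acts by the $\tau$'s. At the level of slopes this pairing exchanges $\mu \leftrightarrow 1-\mu$, which is the symmetry statement for both $\Newt(G)$ and $\Hdg(G)$ after averaging over $\tau$. The \PR\ polygon has slopes $0$ and $1$, each with multiplicity $g$, so it is visibly symmetric.

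Next I would prove $\Hdg(G) \geq \PR$. The key observation is that in the special fiber the \PR\ condition $([\pi]-\sigma_{\tau,j}(\pi))\omega_\tau^{[j]} \subset \omega_\tau^{[j-1]}$ becomes just $[\pi]\omega_\tau^{[j]} \subset \omega_\tau^{[j-1]}$, since $\sigma_{\tau,j}(\pi) = 0$. Therefore the filtration $\omega_\tau^{[0]} \subset \dots \subset \omega_\tau^{[e]}$ refines the kernel filtration of $[\pi]$ on $\omega_\tau$, with successive quotients of dimension $g$. Reading off the Hodge polygon of $\omega_\tau$ (as in $[BH, 1.1.7]$, which is computed from the $[\pi]$-action on $\mathcal H_\tau$ relative to the Hodge filtration) and averaging over $\tau$ gives exactly the lower bound recorded by the \PR\ polygon.

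Finally, for $\Newt(G) \geq \Hdg(G)$ I would invoke Mazur's inequality in the $\mathcal O_L$-equivariant form. One decomposes the (contravariant) Dieudonn\'e module via the unramified embeddings $\tau$, identifies the Frobenius as a $\sigma$-linear operator permuting the $\tau$-components along the Frobenius orbits in $\mathcal T$, and applies Mazur's inequality orbit-by-orbit with the cocharacter read off from the Hodge filtration; averaging over $\tau$ yields the claimed inequality between polygons with symmetric normalization. The main obstacle—and the reason this is deferred to $[BH]$—is the careful setup of the polygons so that they live in the same space and can be compared after the averaging procedure: one has to check that the Hodge polygon extracted from the $[\pi]$-action and the Newton polygon extracted from Frobenius normalize consistently once one divides out the contribution of the ramification degree $e$. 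Once this bookkeeping is fixed, Mazur's inequality applies verbatim.
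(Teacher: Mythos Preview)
The paper gives no proof here: the proposition is recorded purely as a citation of \cite{BH}, Th\'eor\`eme~1.3.1, and even the definitions of $\Newt$, $\Hdg$, $\PR$ are delegated to that reference. So there is nothing in the paper itself to compare your attempt against.

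Your sketch does capture the expected ingredients. Two remarks. First, in case~(C) the inequality $\Hdg(G) \geq \PR$ does not actually require the Pappas--Rapoport filtration: since each $\Hdg_\tau$ is a convex polygon from $(0,0)$ to $(2g,g)$ with symmetric slopes $\lambda_1,\dots,\lambda_g,1-\lambda_g,\dots,1-\lambda_1$ lying in $[0,1]$, it automatically dominates the polygon with slopes $0$ ($g$ times) then $1$ ($g$ times). Your filtration argument is not wrong, but it is the tool relevant to the unitary cases where the $d_{\tau,i}$ vary and $\PR$ is genuinely nontrivial; and note that the inequality in \cite{BH} is stated for an arbitrary $p$-divisible $\mathcal O_L$-module, without assuming that a Pappas--Rapoport datum exists. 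Second, your reduction of $\Newt \geq \Hdg$ to an $\mathcal O_L$-equivariant Mazur inequality, applied Frobenius-orbit by Frobenius-orbit and then averaged, together with your acknowledgment that the substantive content is the normalization and averaging bookkeeping, is an accurate summary of what \cite{BH} carries out.
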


\subsubsection{Case AL}

In this case, one has $G_\pi = H_\pi \times H_\pi^D$. We will consider the $p$-divisible group $G = H_\pi$. It is endowed with an action of $\mathcal{O}_L$ but has no polarization. The sheaf $\omega_G$ decomposes as
$$\omega_G = \bigoplus_{\tau \in \mathcal{T}} \omega_\tau$$
Fix 
$(a_{\tau,j}) \in \ZZ^{\mathcal T \times\{1,...,e\}}$, where $e$ is the ramification index of $L$. Denote by $a_\tau = \dim \omega_\tau$ and $b_\tau = \dim \omega_{G^D,\tau}$. Then $h' = a_\tau + b_\tau$ is independant of $\tau$. In the global setting, $(a_{\tau,j})$ will coincide with the part of the integers $d_{i,\tau,j}$ corresponding to $H_\pi$. The PR datum (for $(a_{\tau,j})$) is then as follows. For each $\tau \in \mathcal{T}$, one has a filtration 
$$\omega_\tau^{[0]} = 0 \subset \omega_\tau^{[1]} \subset \dots \subset \omega_\tau^{[e]} = \omega_\tau$$
where $\omega_\tau^{[i]}$ is locally a direct factor of rank $a_{\tau,1} + \dots + a_{\tau,i}$. \\
Note that this PR datum induces a PR datum for $G^D$ (see Definition \ref{def29}).
For each $\tau \in \mathcal T$, one can define the polygon $Hdg_\tau(G)$; it is defined thanks to $\omega_\tau$, see \cite{BH} Definition 1.1.7. It starts at $(0,0)$ and ends at $(a_\tau+b_\tau,a_{\tau,1} + \dots + a_{\tau,e}) = (h',a_\tau)$. We define the polygon $Hdg(G)$ as the mean of the polygons $Hdg_\tau(G)$. \\
The polygons $PR_\tau$ is defined in \cite{BH} section 1. The polygon PR is the mean of the polygons $PR_\tau$. \\
We define the Newton polygon of $G$ using \cite{BH} def. 1.1.8, and denote it $Newt(G)$. As we have used $H_\pi$ instead of $H_\pi \times H_\pi^D$, these polygons do not need to be polarized (i.e. symmetric in any sense). For the following proposition, we refer again to \cite{BH} Théorème 1.3.1.

\begin{prop}
 \label{prop219}
One has the inequalities
$$Newt(G) \geq Hdg(G) \geq PR.$$
\end{prop}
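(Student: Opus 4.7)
The plan is to reduce the three-term inequality to a term-by-term statement over $\tau \in \mathcal{T}$, then invoke the main comparison result of \cite{BH} applied to $G = H_\pi$ as a $p$-divisible $\mathcal{O}_L$-module (without any additional polarization symmetry to carry around, since in case (AL) the polarization of $G_\pi = H_\pi \times H_\pi^D$ only pairs the two factors and does not constrain $H_\pi$ individually). Concretely, each of $\Newt(G)$, $\Hdg(G)$ and $PR$ is by construction the mean over $\tau$ of polygons $\Newt_\tau(G)$, $\Hdg_\tau(G)$ and $PR_\tau$ respectively, so it suffices to prove
\[ \Newt_\tau(G) \geq \Hdg_\tau(G) \geq PR_\tau \quad \text{for each } \tau \in \mathcal{T},\]
and then average.

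For $\Hdg_\tau(G) \geq PR_\tau$, I would argue combinatorially on the $\tau$-isotypic piece $(\mathcal{H}_\tau, \omega_\tau)$ of rank $h' = a_\tau + b_\tau$, viewed as a module over $\mathcal O_S[T]/Q(T)$ with $T$ acting as $[\pi]$. The PR filtration $0 = \omega_\tau^{[0]} \subset \cdots \subset \omega_\tau^{[e]} = \omega_\tau$ together with the conditions $([\pi] - \sigma_j(\pi)) \omega_\tau^{[j]} \subset \omega_\tau^{[j-1]}$ determines $PR_\tau$ from the jump dimensions $a_{\tau,j}$, and the Hodge polygon is read off from the $[\pi]$-adic behavior of $\omega_\tau$ inside $\mathcal{H}_\tau$. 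The inequality then follows from a standard Young-tableau/convex-hull argument: any submodule satisfying the PR conditions has a refined $[\pi]$-filtration that dominates termwise the one prescribed by the $a_{\tau,j}$, which is exactly the statement $\Hdg_\tau \geq PR_\tau$. This is the content (in the AL shape) of \cite{BH} Théorème 1.3.1, and runs parallel to the case (C) already used for \autoref{prop218}.

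For $\Newt_\tau(G) \geq \Hdg_\tau(G)$, I would apply the Mazur--Katz inequality to the Dieudonné crystal of $G$ component-by-component along the unramified embeddings $\tau \in \mathcal T$, using the partial Frobenius between consecutive $\tau$'s and observing that the Hodge filtration on $\mathcal{H}_\tau$ is precisely what measures the elementary divisors of Frobenius modulo $p$. Since in case (AL) there is no polarization constraint, no symmetry needs to be preserved, so the classical argument applies directly. The main obstacle I expect is bookkeeping: one has to keep the indexing of $\tau$ straight and verify that the formulation of the polygons in \cite{BH} Definitions 1.1.7--1.1.8 agrees, on the $H_\pi$-factor of $G_\pi$, with the one extracted here from the Pappas--Rapoport datum; once this dictionary is in place the argument is mechanical.
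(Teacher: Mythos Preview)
Your proposal is essentially the same as the paper's: both reduce the statement to \cite{BH} Th\'eor\`eme 1.3.1, which is literally the entirety of the paper's proof in case (AL). Your added sketch is reasonable, though note one imprecision: the paper defines only $\Hdg(G)$ and $PR$ as means of $\tau$-indexed polygons, while $\Newt(G)$ is defined directly via \cite{BH} Definition 1.1.8 (there is no separate $\Newt_\tau$ to average), so the inequality $\Newt(G)\geq \Hdg(G)$ is established globally rather than embedding-by-embedding.
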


\subsubsection{Case AU}

In this case, we define $G = G_\pi$, we denote $F_\pi$ by $L$ and by $L^+$ the subfield of element fixed by $\star$, which we denote by $\overline{.}$ as a conjugation. It is endowed with an action of $\mathcal O_L$ but has also a polarization. The sheaf $\omega_G$ decomposes as
$$\omega_G = \bigoplus_{\tau \in \mathcal{T^0}} \omega_\tau \oplus \omega_{\overline{\tau}}$$
where $\mathcal{T}$ is the set of embedings of $F$ and $\mathcal{T^0}$ the embeddings of $F$ modulo conjugation. We define $(a_{\tau,j}) \in \ZZ^{\mathcal T \times\{1,...,e\}}$ as before, and $a_\tau = \dim \omega_\tau$, $b_\tau = \dim \omega_{G^D,\tau} = \dim \omega_{\overline\tau} = a_{\overline\tau}$, and $h' = a_\tau + b_\tau$.
The PR condition is then as follows. For each $\tau \in \mathcal{T}$, one has a filtration 
$$\omega_\tau^{[0]} = 0 \subset \omega_\tau^{[1]} \subset \dots \subset \omega_\tau^{[e]} = \omega_\tau$$
where $\omega_\tau^{[j]}$ is locally a direct factor of rank $a_{\tau,1} + \dots + a_{\tau,j}$. \\
Note that this PR condition coincides with the induced PR condition for $\omega_{\overline{\tau}}$ thanks to the compatibility with the polarization.
For each $\tau \in \mathcal T$, one can define the polygon $Hdg_\tau(G)$; it is defined using $\omega_\tau$. It starts at $(0,0)$ and ends at $(a_\tau+b_\tau,a_{\tau,1} + \dots + a_{\tau,e})=(h',a_\tau)$. 
We define the polygon $Hdg(G)$ as the mean of the polygons $Hdg_\tau(G)$ for all $\tau \in \mathcal T$.\\
The polygons $PR_\tau$ is defined in \cite{BH}. The polygon PR is the mean of the polygons $PR_\tau$. \\
We define the Newton polygon of $G$ as $Newt(G)$. Once again, thanks to \cite{BH} Théorème 1.3.1. one has the following result. 

\begin{prop}
 \label{prop220}
One has the inequalities
$$Newt(G) \geq Hdg(G) \geq PR,$$
and these polygons are symmetric.
\end{prop}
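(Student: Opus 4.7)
The plan is to reduce the two inequalities $Newt(G) \geq Hdg(G) \geq PR$ to \cite{BH} Théorème 1.3.1, exactly as in the previous two cases (Propositions \ref{prop218} and \ref{prop219}). That result gives, at each embedding $\tau \in \mathcal T$, pointwise inequalities $Newt_\tau(G) \geq Hdg_\tau(G) \geq PR_\tau$, where the left-hand side is independent of $\tau$; averaging over $\tau$ then yields the two stated inequalities for the globally defined polygons.

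It remains to establish the symmetry. For $Newt(G)$, the polarization $\lambda_\pi : G^D \overset{\sim}{\longrightarrow} G^{(c)}$ provides an isogeny between $G$ and its Cartier dual which is $\mathcal O_L$-linear up to the twist by $c$. Twisting by $c$ does not alter the underlying isocrystal, so the Newton polygons of $G$ and of $G^D$ coincide; but Cartier duality sends a slope $\lambda$ to $1-\lambda$, hence $Newt(G)$ is invariant under this involution, i.e.\ it is symmetric.

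For $Hdg(G)$ and $PR$, the same polarization yields $\omega_{G^D,\tau} \simeq \omega_{G,\overline\tau}$ and, via Proposition \ref{propdual}, identifies the Pappas-Rapoport filtration at $\overline\tau$ with the dual of the one at $\tau$. Consequently $Hdg_{\overline\tau}(G)$ is the image of $Hdg_\tau(G)$ under the central symmetry $(x,y) \mapsto (x, x-y)$ on polygons of horizontal length $h'$, and similarly for $PR_{\overline\tau}$. Summing over $\mathcal T$ (which pairs $\tau$ with $\overline\tau$) and dividing by $|\mathcal T|$, the averages $Hdg(G)$ and $PR$ inherit the same symmetry.

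The only real bookkeeping, which I expect to be the main (mild) obstacle, is to check that the compatibility of the PR datum with the polarization imposed in Definition \ref{def218} matches the step-by-step dimensions $(a_{\overline\tau,j})$ with the duals of $(a_{\tau,j})$ in the expected way; this is directly built into Proposition \ref{propdual}, so no genuinely new argument is required beyond unwinding definitions.
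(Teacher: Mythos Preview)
Your proposal is correct and takes essentially the same approach as the paper: the inequalities are obtained by invoking \cite{BH} Théorème 1.3.1 at each $\tau$ and averaging, exactly as in Propositions \ref{prop218} and \ref{prop219}. In fact the paper gives no argument beyond that citation, so your added explanation of the symmetry (via the polarization $G^D \simeq G^{(c)}$ pairing $\tau$ with $\overline\tau$ and using Proposition \ref{propdual} for the PR filtration) is more detailed than what the paper records, but entirely in the same spirit.
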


\subsubsection{Case AR}

In this case also we still define $G = G_\pi$ which is still polarized and carries an action as in case C or AU, but we no longer have the $\lambda$-compatibility for the Pappas-Rapoport datum. This doesn't change anything regarding the polygons : we can still define Hodge and Newton polygons using the action as in \cite{BH} Section 1 (this doesn't use the Pappas-Rapoport datum), and the polarisation implies that $\Newt(G) = \Newt(G^D)$ and $\Hdg(G) = \Hdg(G^D)$ (equalities between $\Hdg_\tau(G) = \Hdg_{\overline\tau}(G^D)$). Moreover the Pappas-Rapoport polygon only depends on the integers $(d_{\tau,j})_{\tau,j}$, and these are symmetric for $G$ and $G^D$ (see remark \ref{remadpol}).
 In particular, thanks to \cite{BH} Théorème 1.3.1. one has the following result.
 
 \begin{prop}
 \label{prop221}
One has the inequalities
$$Newt(G) \geq Hdg(G) \geq PR,$$
and these polygons are symmetric.
\end{prop}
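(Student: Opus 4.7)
The plan is essentially to invoke \cite{BH} Théorème 1.3.1, as flagged by the paragraph immediately preceding the statement, and to verify that the loss of $\lambda$-compatibility in case (AR) plays no role in either half of the claim.

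First I would establish the inequalities $Newt(G) \geq Hdg(G) \geq PR$. The point is that each of the three polygons is built from data that does not require the Pappas-Rapoport datum to be $\lambda$-compatible: $Newt(G)$ is an isogeny invariant of the pair $(G,\iota)$; $Hdg(G)$ is the mean over $\tau$ of the polygons $Hdg_\tau(G)$, themselves defined from $\omega_{G,\tau}$ with its $\mathcal{O}_L$-action alone (\cite{BH} Def.~1.1.7); and $PR$ is determined purely by the combinatorial datum $(d_{\tau,j})$. Consequently the general statement of \cite{BH} Théorème 1.3.1, which treats an arbitrary $p$-divisible group with action and PR datum, applies verbatim, exactly as in the cases (C), (AL), and (AU).

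For the symmetry I would use only the weaker compatibility actually available in (AR), namely the polarization isogeny $\lambda : G \overset{\sim}{\fleche} G^{D,(c)}$ respecting the $\mathcal O_L$-action, with $c$ the nontrivial automorphism of $F_\pi/F_\pi^+$. Twist by $c$ does not alter Newton slopes, so $Newt(G) = Newt(G^{D,(c)}) = Newt(G^D)$; since the slopes of $G^D$ are the images of those of $G$ under $\lambda \mapsto 1-\lambda$, the symmetry of $Newt(G)$ follows. The same isogeny yields $\dim \omega_{G,\tau} = \dim \omega_{G^D,\tau}$ for every $\tau$, using that in (AR) the ramification of $L/L^+$ makes $c$ act trivially on $\mathcal{T} = \Hom(L^{ur},\overline{\QQ_p})$; averaging over $\tau$ gives the symmetry of $Hdg(G)$. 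Finally, the symmetry of $PR$ is immediate from Remark \ref{remadpol}, which provides the identity $d_{\tau,j} + d_{c(\tau),j} = h$ at the level of the integers defining the polygon.

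The only potential obstacle is the warning of Remark \ref{remPRAR} that a fully $\lambda$-compatible Pappas-Rapoport filtration does not exist in (AR) (except in special fibre). I would emphasize that this warning is irrelevant here: the three polygons at stake are invariants of data that survive without $\lambda$-compatibility, so the (AR) pathologies only surface later when one genuinely needs to exploit the filtration $\omega^{[\bullet]}$ itself.
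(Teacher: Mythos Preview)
Your proposal is correct and follows essentially the same approach as the paper: both invoke \cite{BH} Th\'eor\`eme 1.3.1 for the inequalities and derive the symmetries from the polarisation $G \simeq G^{D,(c)}$ (for $Newt$ and $Hdg$, using that $c$ acts trivially on $\mathcal T$ in case (AR)) together with Remark~\ref{remadpol} (for $PR$), while stressing that the failure of $\lambda$-compatibility is irrelevant since none of the three polygons depends on the Pappas--Rapoport filtration. One small sharpening: for the symmetry of $Hdg(G)$ the polarisation actually gives the equality $Hdg_\tau(G)=Hdg_\tau(G^D)$ of full polygons, not merely of the dimensions $\dim\omega_{G,\tau}$, and it is this together with the general duality between $Hdg_\tau(G)$ and $Hdg_\tau(G^D)$ that yields the symmetry.
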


\subsection{Stratifications}

Using the previous polygons, we can define subsets of the reduction of $X$ modulo $p$, $|X_{\kappa}|$. 
Denote by $Pol$ the set of polygonal convex functions on $[0,\dots,h]$ with breakpoints at abscissas in $\frac{1}{e}\ZZ$. The previous polygons define two maps,
\[
|X_\kappa| \overset{\Newt_\pi}{\fleche} Pol \quad \text{and} \quad |X_\kappa| \overset{\Hdg_\pi}{\fleche} Pol.\]

\begin{prop}
 \label{prop222}
The maps $\Newt_\pi$ and $\Hdg_{\pi,\tau}$ are semi-continuous, in the sense that polygons can only descend by generisation. Moreover they have same begining and end-points (which are always locally constant and constant in our global situation).
\end{prop}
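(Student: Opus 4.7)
The plan is to treat the three assertions—constancy of endpoints, semi-continuity of $\Hdg_{\pi,\tau}$, and semi-continuity of $\Newt_\pi$—separately, since they rest on unrelated inputs.

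First I would verify that every polygon starts at the origin and that its endpoint is locally constant on $X_\kappa$, and in fact globally constant. The $y$-coordinate of the endpoint of $\Hdg_{\pi,\tau}$ is $\dim_{\mathcal{O}_S}\omega_{G_\pi,\tau}$, which is locally constant since $\omega_{G_\pi,\tau}$ is a direct summand of the locally free sheaf $\omega_{G_\pi}$, and globally equal to $\sum_j d_{\pi,\tau,j}$ by the Pappas-Rapoport dimension conditions (with the appropriate formula in each of the four cases C, AL, AU, AR from \autoref{sectdecpol}); its $x$-coordinate is the Morita-rescaled $\mathcal{O}_L$-height of $G_\pi$. Likewise, the endpoint of $\Newt_\pi$ is determined by the $\mathcal{O}_L$-height and dimension of $G_\pi$, both of which are fixed globally by the combinatorial datum $\mathcal{C}$.

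For the semi-continuity of $\Hdg_{\pi,\tau}$, I would recall from \cite{BH} Definition 1.1.7 that the polygon is determined by the dimensions $\dim_{k(x)} \omega_{G_\pi,\tau}[[\pi]^j]/\omega_{G_\pi,\tau}[[\pi]^{j-1}]$ for $j=1,\dots,e$, equivalently by the Fitting ideals of $\omega_{G_\pi,\tau}$ viewed as an $\mathcal{O}_{L^{ur}}[T]/(Q(T))$-module via $T=[\pi]$. The ranks of these $[\pi]^j$-kernels are upper semi-continuous on $X_\kappa$ by a standard argument on Fitting ideals, and translating this into the polygon language is exactly the statement that $\Hdg_{\pi,\tau}$ can only go up under specialisation, i.e.\ descend under generisation.

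For the semi-continuity of $\Newt_\pi$, I would apply the classical Grothendieck-Katz theorem to the Dieudonné $F$-isocrystal $\mathbb{D}(G_\pi)[1/p]$ over $X_\kappa$ endowed with its $\mathcal{O}_L$-action. The refined Newton polygon of \cite{BH}, whose slopes lie in $\frac{1}{e}\ZZ$, is obtained by decomposing this isocrystal along the unramified embeddings of $L^{ur}$ and then renormalising the usual Newton polygon to account for the $\mathcal{O}_L$-action, so Grothendieck-Katz's upper semi-continuity passes through this renormalisation. The main obstacle, such as it is, lies precisely here: one must check that the $\mathcal{O}_L$-refinement is compatible with Grothendieck-Katz's argument, but this reduces to applying the classical statement on each isotypic component and averaging, which is purely bookkeeping.
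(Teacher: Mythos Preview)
Your argument is correct, and the overall structure (endpoints, then Hodge, then Newton, each handled by an independent input) matches the paper's. The details differ in two places worth noting.

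For the Hodge polygon, the paper does not invoke Fitting ideals directly. Instead it trivialises $\omega_{G_\pi,\tau}$ Zariski-locally, so that the action of $\pi$ becomes a nilpotent matrix, giving a morphism from $X_\kappa$ to the nilpotent cone $Nilp_{p_\tau}$ of $\GL_{p_\tau}$; the Hodge strata are then the pullbacks of the orbit stratification on the cone, and semi-continuity follows from the well-known closure relations among nilpotent orbits. Your Fitting-ideal argument is equivalent and arguably more self-contained, but the paper's formulation has the advantage of immediately identifying the Hodge stratification with the pullback of a known stratification, which it exploits later (e.g.\ in the discussion of whether the Hodge stratification is strong, and in the comparison with Spaltenstein varieties).

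For the Newton polygon, the paper cites Rapoport--Richartz \cite{RR} Theorem 3.6 rather than the bare Grothendieck--Katz statement. That reference already handles $F$-isocrystals with additional structure, so the compatibility check you flag as ``the main obstacle'' is absorbed into the citation. Your reduction to Grothendieck--Katz on isotypic components followed by averaging is fine, but citing \cite{RR} is cleaner and avoids the bookkeeping.
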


\begin{proof}
The result on the Newton polygon is well-known, see for example \cite{RR} Theorem 3.6. For the Hodge polygon, note that locally on $X_\kappa$ we can trivialise 
$\omega_{G_\pi,\tau}$ and the action of $\pi$ on it is nilpotent (as $\pi^e = p = 0$ on $\mathcal O_{X_\kappa}$). Thus there is (Zariski locally) a continuous map
\[ X_\kappa \fleche Nilp_{p_\tau},\]
to the Nilpotent cone of $\GL_{p_\tau}$, sending a point to the matrix of $\pi$. We can check that the Hodge stratas are exactly the pullback of the stratification on the nilpotent cone. But now the analogous result is known for $Nilp_{p_\tau}$.
\end{proof}

There is moreover a (constant) map $PR_\pi : |X_\kappa| \fleche Pol$. If $\pi$ is understood from the context, we will drop it from the previous notations.
Recall the following,

\begin{defin}
Define the Newton stratification of the reduction mod $p$ of the Pappas-Rapoport model $X \otimes_{\mathcal O_K} \overline{\FP} = \coprod_\nu X^\nu$, by,
\[ X^\nu = \{ x \in X_{\kappa} : \Newt(x) = \nu\}.\]
The locus $X^{\nu =PR}$ of points $x \in |X_\kappa|$ such that $\Newt(x) = \PR(x)$ is called the $\mu$-ordinary locus (for $\pi$). 
\end{defin}

In particular the $\mu$-ordinary locus is an open stratum, by Propositions \ref{prop222} and \ref{prop218},\ref{prop219},\ref{prop220},\ref{prop221}. 
There is another natural stratification with another natural open stratum :

\begin{defin}
The locus $X_{\nu = PR}$ where $\Hdg_\pi(x) = \PR(x)$ is called the generalised Rapoport locus (for $\pi$). It contains the $\mu$-ordinary locus because of the inequalities recalled in the previous section. 
More generally we can define the Hodge stratification, $X_\kappa = \coprod_{\nu} X_\nu$, by
\[ X_{(\nu_\tau)} = \{ x \in X_\kappa : \Hdg_{\pi,\tau}(x) = \nu_\tau, \forall \tau\}.\]
\end{defin}

As for every $\tau$, we have $\Hdg_{\pi,\tau} \geq PR_\tau$, we have $\Hdg_\pi = \PR$ if and only if $\Hdg_{\pi,\tau} = \PR_\tau$ for all $\tau$. Remark also that in case $(C),(AU),(AR)$, because of the polarisation there is a symmetry between $\Hdg_{\pi,\tau}$ and $\Hdg_{\pi,\overline\tau}$. In particular we should only consider symmetric data $(\nu_\tau)$ in these cases in order to have non empty strata.

\begin{example}
\begin{enumerate}
\item If the PEL datum is unramified (or if $\pi$ is unramified) the Hodge polygon $\Hdg_\pi$ is constant on $X$, thus the generalised Rapoport locus consists in all the variety in this case.
\item In the Hilbert-Siegel case, the generalised Rapoport locus is the Rapoport locus, i.e. the locus where the conormal sheaf $\omega_{G_\pi}$ is locally free as 
$\mathcal O_{X_\kappa} \otimes_{\ZZ_p} \mathcal O_F$-module. It thus contains the $\mu$-ordinary locus (which is just the ordinary locus in this case).
\end{enumerate}

\end{example}

\subsection{Smoothness of the integral model}

In this section we prove that the Pappas-Rapoport model $X$ is smooth if all the primes above $p$ falls in type (AL),(AU) or (C) (i.e. doesn't fall in case (AR)).
We will reduce to work locally, thus let $G/S$ be a $p$-divisible group over a scheme $S$, endowed with an action of $\mathcal O_{L}$, where $L/\QQ_p$ is a finite extension\footnote{$G$ will be $G_\pi$ or $H_\pi$ and $L$ will be the corresponding $F_\pi$ as in the previous section}, possibly a polarisation, and a Pappas-Rapoport datum. 
We call such a $p$-divisible group, with action, eventual polarisation, and Pappas-Rapoport datum a $p$-divisible $\mathcal D$-module, with $\mathcal D$ referring to the type of the extra-datum (including the Pappas-Rapoport datum).
Denote by $\mathcal H = H^1_{dR}(G/S)$ the locally free $\mathcal O_S$-module associated to $G$. It is actually a locally free $\mathcal O_S \otimes_{\ZZ_p}\mathcal O_{F}$-module of rank $h$. It is endowed, except in case (AL), with a polarisation (i.e. a perfect pairing)
\[ < . ,.> : \mathcal H \times \mathcal H^{s} \fleche \mathcal O_S,\]
which is alternating, and such that
\[ <x,ay> = <s(a)x,y>, \quad \forall x,y \in \mathcal H, \forall a \in \mathcal O_{F}.\]

\begin{theor}
\label{thr229}
Suppose that for every prime $\pi$ above $p$, $\pi$ doesn't fall in case (AR).
Then $X$ is smooth.
\end{theor}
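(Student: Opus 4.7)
The plan is to establish formal smoothness of the completed local rings of $X$ at every geometric point, by combining Serre-Tate theory with Grothendieck-Messing theory to reduce the question to a local model computation on the Pappas-Rapoport filtration. Smoothness of the local model is verified inductively, one step at a time in the PR flag.

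First I would use Serre-Tate to identify deformations of $(A,\lambda,\iota,\eta)$ at a geometric point of $X_\kappa$ with deformations of the $p$-divisible group $A[p^\infty]$ together with its extra structure. The Morita-equivalent decomposition $A[p^\infty] \simeq \prod_{\pi \in \mathcal P} G_\pi^{\,n_\pi}$ discussed in section \ref{sectdecpol} reduces the problem to deforming each factor $G_\pi$, together with its $R_\pi$-action, its (possible) polarization, and the Pappas-Rapoport filtration on $\mathcal H(G_\pi)_\tau$ for each $\tau$. Since by assumption no $\pi$ is of type (AR), each $G_\pi$ falls into (C), (AL) or (AU).

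Next, Grothendieck-Messing theory identifies lifts of $G_\pi$ to an infinitesimal thickening $S \hookrightarrow S'$ with lifts of the Hodge filtration $\omega_{G_\pi} \subset \mathcal H(G_\pi)$; lifting the PR datum additionally requires lifting the entire flag $\omega^{[\bullet]}_{G_\pi,\tau}$, and similarly the top half $\omega^{[e+\bullet]}_{G_\pi,\tau}$ in the full PR sense. I would proceed by induction on $j$: given a lift of $\omega^{[j-1]}$, a lift of $\omega^{[j]}$ is a locally direct factor of $\mathcal H/\omega^{[j-1]}$ of rank $d_j$ contained in the kernel of $[\pi]-\sigma_j(\pi)$. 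Using that $\mathcal H_\tau$ is locally free as an $\mathcal O_{S'}[T]/Q(T)$-module (which follows from Hypothesis \ref{hyp1} and the PR structure), one shows that this kernel is locally free over $\mathcal O_{S'}$ of the correct rank, so that the parameter space of lifts of $\omega^{[j]}$ is an open subscheme of a relative Grassmannian. This is the main obstacle: it is the freeness-of-kernel statement at each inductive step that really uses the non-(AR) hypothesis.

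For cases (C) and (AU), the polarization introduces a compatibility constraint, but by Proposition \ref{propdual} the top half of the PR flag is determined by the bottom half, and the $\lambda$-compatibility translates, via Corollary \ref{proppolQ}, into an isotropy condition for the auxiliary pairings $h_\ell$ on $\mathcal H[Q_\ell]$. The parameter spaces thus become relative isotropic Grassmannians (symplectic or hermitian), which remain smooth. In case (AL) no polarization constraint arises and the argument is even more direct. Assembling these smooth local models over $j$, $\tau$ and $\pi$ gives a formally smooth deformation functor of the expected relative dimension, proving smoothness of $X$ over $\mathcal O_K$. Case (AR) must be excluded precisely because of Remark \ref{remPRAR}: the polarization and the PR flag there are not compatible outside the special fiber, so the local model acquires singularities and the above inductive Grassmannian description breaks down.
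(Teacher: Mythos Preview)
Your proposal is correct and follows essentially the same route as the paper: Serre--Tate plus Grothendieck--Messing to reduce to lifting the PR flag inside the crystal, then an inductive lift of $\omega_\tau^{[1]},\dots,\omega_\tau^{[e]}$ using that $\mathcal H_\tau$ is locally free over $\mathcal O_{S'}[T]/Q(T)$, with the isotropy condition for the pairings $h_\ell$ in the polarized case handled via Corollary~\ref{proppolQ}.

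Two small clarifications. First, the freeness-of-kernel statement at each inductive step is \emph{not} where the non-(AR) hypothesis enters; that freeness holds in all cases. What fails in (AR) is exactly what you say at the end (Remark~\ref{remPRAR}): the $\lambda$-compatibility of the PR flag cannot be imposed over a thickening, so no smooth isotropic Grassmannian picture is available. Second, in case (AU) the paper does not use hermitian isotropic Grassmannians: since the pairing is between $\mathcal H_\tau$ and $\mathcal H_{\overline\tau}$ (with $\tau\neq\overline\tau$), one simply lifts the PR datum for one choice of $\tau$ exactly as in (AL), and the datum for $\overline\tau$ is then forced by taking orthogonals. Only case (C) genuinely requires the isotropy condition of Corollary~\ref{proppolQ}.
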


\begin{proof}
As $X$ is of finite presentation, it is enough to show it is formally smooth. Let $S \twoheadrightarrow R$ be a surjective morphism of rings with ideal $I$ such that $I^2 = 0$.
In particular $I$ is endowed with nilpotent divided powers, and thus we can use Grothendieck-Messing's theory. Thus let $x \in X(R)$. If $p$ is invertible on $R$, as we know that $X$ is smooth in generic fiber, there is $y$ in $X(S)$ above $x$ and we are done. Otherwise, by Serre-Tate, it is enough to lift the $p$-divisible group, and we can divide the task between the primes above $p$ in $\mathcal O_B$. Thus fix one such prime and $G/R$ the associated $p$-divisible group (with extra structures). By Grothendieck-Messing, it is enough to lift $\omega_G$ together with its Pappas-Rapoport datum as a locally direct factor (stable by $\mathcal O_F$ and totally isotropic) in
\[ \mathcal H_G \otimes_R S.\]
We will lift successively $\omega_G^{[1]}, \dots, \omega_G^{[e]}$. Thus fix $\tau$ an embedding of $L^{ur}$. We will work separately for each $\tau$. Recall that in case (C) or (AU) we have a polarisation
\[ \mathcal H_\tau \times \mathcal H_{\overline\tau} \fleche R,\]
that lifts to $S$ (as it is defined on the crystalline site of $R$).  In particular in case $(AU)$ it will be sufficient to choose one element in $[\tau] = \{\tau,\overline\tau\}$, say $\tau$, for each embedding $\tau$, lift the Pappas-Rapoport datum in $\mathcal H_\tau$ and take its orthogonal which will be a lift of the Pappas-Rapoport datum for $\overline\tau$. Thus this is similar to case (AL). In case (C), we will need the Pappas-Rapoport datum to be moreover totally isotropic. 

Recall that the sheaf $\mathcal H_\tau$ is locally free as a $R [T] / Q(T)$-module, where $Q$ is the Eisenstein polynomial of an uniformizer. We refer to sections \ref{sect232} and \ref{sectdecpol} for the notations. \todo{Rappeler les notations des sections précédentes} We have a submodule
\[ \omega_\tau^{[1]} \subset \mathcal H_\tau[T-\pi_1],\]
which is moreover totally isotropic for $h_{\tau,1}$ by Corollary \ref{proppolQ} in case (C). Thus, there exists a (totally isotropic, in case (C)) lift of this module to
\[ \mathcal H_\tau \otimes_R S[T-\pi_1]\]
that we denote by $\widetilde{\omega}_\tau^{[1]}$. 
Suppose that we have constructed for $1 \leq \ell \leq e-1$, locally direct factors
\[ \widetilde\omega_\tau^{[1]} \subset \dots \subset \widetilde\omega_\tau^{[\ell]} \subset \mathcal H_\tau[Q_\ell]\otimes_R S,\]
which are moreover isotropic for $h_{\tau,\ell}$ in case (C), lifting the previous datum over $R$ to $S$. 
Denote 
\[ E^\ell_\tau = \left(\mathcal H_\tau \otimes_R S 
 /\widetilde{\omega}_{\tau}^{[\ell]} \right) [Q^\ell],\]
It is a locally free $S[T]/(Q^\ell(T))$-module, which contains modulo $I$ the image of $\omega^{\ell + 1}_\tau$ 
as a locally direct factor.  In case (C), the fact that $\widetilde\omega_\tau^{[\ell]}$ is totally isotropic for $h_{\ell, \tau}$ implies that $E^\ell_\tau$ inherits the pairing from $\mathcal{H}_\tau$.
Thus, it is enough to lift the image of $\omega^{\ell + 1}_\tau$ 
as a locally direct factor in
\[ E_\tau^\ell[T-\pi_{\ell+1}],\]
which is moreover in case (C) totally isotropic for $h_{\tau,\ell+1}$ by Corollary \ref{proppolQ}. 
But such a lift exists (by smoothness of the appropriate partial flag variety), thus there exists $\widetilde{\omega}_\tau^{[\ell+1]} \subset \mathcal H_\tau \otimes S$ lifting ${\omega}_\tau^{[\ell+1]}$. 
By induction, we can thus find a lift of the filtration
\[ 0 \subset \omega^{[1]}_\tau \subset \dots \subset \omega^{[e]}_\tau,\]
to $S$ satisfying all the assumptions of the Pappas-Rapoport datum. Thus by Grothendieck-Messing (as $I^2 = 0$) there exists a point $y \in X(S)$ lifting $x$, and $X$ is smooth.
\end{proof}

\begin{rema}
As shown in the proof, we could have argued slightly differently using a local model for $X$ in the spirit of \cite{P-R}.
\end{rema}

\begin{rema}
Unfortunately the analogous result is not true in case (AR). It is possible to construct flat integral model in this case too, as done for example in \cite{Kramer} Theorem 4.5 for $U(1,n-1)$ for a ramified quadratic extension (a more general possible construction is in \cite{P-R} but the geometry isn't studied there). In \cite{Kramer}, is it constructed a local model (but the global constructed would lead to the same singularities) $\mathcal M$ which is regular (thus flat) and whose special fiber is the union of two smooth irreducible varieties of dimension $n-1$ crossing along a smooth irreducible variety of dimension $n-2$. See a calculation in \ref{AppE}.
\end{rema}

\subsection{Deformations and Displays}

In order to construct deformations 
of a point $x$ in $X_\kappa$, associated to a datum $(A,\lambda,i,\eta,(\omega^{[\cdot]}_\bullet))$, we will deform the $p$-divisible group $A[p^\infty]$, the action of $\mathcal O_B$, the polarisation and the Pappas-Rapoport condition, and use Serre-Tate theory (because $\eta$ is a level structure outside $p$, we can deform it trivially).

Moreover, using the previous simplification of $A[p^\infty]$ using $\mathcal O_B\otimes \ZZ_p$ and Morita equivalence, it is enough to deform the polarised $p$-divisible $\mathcal O_{F_\pi}$-modules $G_\pi$ together with their Pappas-Rapoport filtration, i.e. the $p$-divisible $\mathcal D_\pi$-module. To such a $p$-divisible group over a perfect field $k$ of characteristic $p$ is associated a Dieudonn\'e module over $W(k)$ (more precisely its Dieudonn\'e crystal)
and we want to deform it over $k[[X]]$ such that the special fiber at $X=0$ corresponds to $G$, and the generic fiber satisfies better properties, like being $\mu$-ordinary for example.
In order to do this, we will use the theory of displays (cf. \cite{Zink} and \cite{LauDieu} for the equivalence with etale part). We will be interested mainly in the tools developed in \cite{Wed1} section 3.2 (particularly 3.2.7 and theorem 3.2.8). In particular

\begin{prop}[Zink, Wedhorn, see \cite{Wed1} Theorem 3.2.8]
\label{Wed328}
Let $k$ be a perfect field of characteristic $p$, $G/k$ (with additional structures $\iota_0,\lambda_0$) and denote $P_0$ the associated display (with additional structures $\iota_0,\lambda_0$). Let $N$ be a $W(k)$-linear endomorphism of $P_0$, satisfying,
\begin{enumerate}
\item $N^2 = 0$
\item $N$ is skew-symmetric with respect to $\lambda_0$
\item $N$ is $\mathcal O$-linear
\end{enumerate}
Then there exists a deformation $(P,\iota,\lambda)$ of $(P_0,\iota_0,\lambda_0)$ (of display with additional structures) over $k[[t]]$ whose associated $p$-divisible group (with additionnal structure) $(X_N,\iota,\lambda)$ lifts $(X,\iota_0,\lambda_0)$ and such that, if $P_0$ is bi-infinitesimal,
\[ (X_N,\iota,\lambda) \otimes_{k[[t]]} k((t))^{perf} \simeq \mathcal{BT'}((P,\iota,\lambda)\otimes_{k[[t]]} k((t))^{perf}),\]
where $\mathcal{BT}'$ associate to a crystal (over a perfect field) its $p$-divisible group.
\end{prop}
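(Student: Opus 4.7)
The plan is to construct the deformed display explicitly and then verify the compatibilities with the additional structures. Start from the trivial base change: set $P := P_0 \otimes_{W(k)} W(k[[t]])$ as a $W(k[[t]])$-module, take $Q \subset P$ to be the corresponding base-changed submodule, and perturb the Frobenius structure using $N$ twisted by the Teichmüller lift $[t] \in W(k[[t]])$ of $t$. Concretely, define
\[
F := (\mathrm{id} + [t] N) \circ F_0, \qquad V^{-1} := (\mathrm{id} + [t] N) \circ V_0^{-1},
\]
both extended $\sigma$-semi-linearly to $W(k[[t]])$. The first step is to check that $(P,Q,F,V^{-1})$ satisfies Zink's axioms for a 3n-display over $k[[t]]$: most axioms are inherited from $P_0$ because $[t]$ lies in the kernel of $W(k[[t]]) \to W(k)$, and the nilpotence condition for $V$ follows from the fact that $[t]$ is topologically nilpotent in $W(k[[t]])$ and $N$ has nilpotent reduction on $P_0/pP_0$ (in the bi-infinitesimal setup).

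Next I would verify that the extra structures extend. The $\mathcal{O}$-linearity of $N$ directly implies that the base-changed action $\iota_0 \otimes \mathrm{id}$ commutes with $\mathrm{id} + [t]N$, hence with $F$ and $V^{-1}$, so $\iota$ is defined on $P$. For the polarization, one views $\lambda_0$ as an isomorphism $P_0 \to P_0^\vee$ intertwining the Frobenius structures appropriately; the skew-symmetry of $N$ with respect to $\lambda_0$ translates into the statement that $(\mathrm{id} + [t]N)^\vee = \mathrm{id} - [t]N$ under $\lambda_0$, and the hypothesis $N^2 = 0$ is exactly what makes $(\mathrm{id}+[t]N)$ and $(\mathrm{id}-[t]N)$ mutually inverse. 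This ensures that $\lambda_0 \otimes \mathrm{id}$ still yields a polarization $\lambda$ of the deformed display; alternating-ness is preserved automatically since $\lambda_0$ was alternating and the perturbation is controlled to first order by $N^2 = 0$.

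Finally, for the identification of the generic fiber in the bi-infinitesimal case, one base-changes to the perfect ring $k((t))^{\mathrm{perf}}$. Here Lau's equivalence \cite{LauDieu} between displays and $p$-divisible groups over perfect bases of characteristic $p$ applies, and the associated $p$-divisible group $X_N$ of Zink's deformation coincides with the one attached functorially to $(P,\iota,\lambda) \otimes_{k[[t]]} k((t))^{\mathrm{perf}}$ through $\mathcal{BT}'$. The bi-infinitesimal hypothesis is what guarantees that no \'etale piece complicates this comparison.

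The main obstacle is the polarization check: one must track how $\lambda_0$ transforms under the perturbation $\mathrm{id} + [t]N$ and confirm that the induced bilinear form on $P$ remains perfect, alternating, and compatible with the deformed $F$ and $V^{-1}$. This is precisely where all three hypotheses on $N$ are used simultaneously, and $N^2 = 0$ plays the decisive role of cutting off higher-order corrections so that the inversion and the alternating property both survive. The $\mathcal{O}$-linearity and display-axiom checks are then essentially formal, and the generic-fiber statement reduces to invoking Lau's equivalence.
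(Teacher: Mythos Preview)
The paper does not give its own proof of this proposition: it is stated as a result of Zink and Wedhorn with a citation to \cite{Wed1}, Theorem~3.2.8, and no argument is supplied. Your sketch is essentially the construction that Wedhorn carries out and that the present paper itself uses repeatedly afterwards (see e.g.\ the proof following Proposition~\ref{propNNP}, where the authors write $F^0_N = (\id + [t]N)F^0$ and $(V^0_N)^{-1} = (\id + [t]N)(V^0)^{-1}$). So you are on the right track and in agreement with the intended argument.

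One small point deserves care. Your justification of the nilpotence axiom (``follows from the fact that $[t]$ is topologically nilpotent and $N$ has nilpotent reduction'') is not quite the right reasoning: the $V$-nilpotence of the deformed display is inherited from the $V$-nilpotence of $P_0$ itself (which holds when $G$ is formal), not from properties of $N$ or $[t]$. The perturbation $\id + [t]N$ reduces to the identity modulo the augmentation ideal plus $t$, so the relevant nilpotence test is unchanged. This is a minor correction to an otherwise accurate outline; the roles you assign to $N^2=0$ (for the polarization), $\mathcal{O}$-linearity of $N$ (for $\iota$), and Lau's equivalence over the perfect generic fiber are all correct.
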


\begin{rema}
Conditions 1. and 2. are only needed to lift the polarisation. In particular, they will not be needed in case (AL).
We will use this kind of deformation only to modify the Newton polygon, in particular we will be able to choose any lift of the Pappas-Rapoport datum, this is why we don't make any reference of it in the previous proposition.
\end{rema}

\section{The Hodge stratification}

As explained before, we have fixed $\pi \in \mathcal P$, and we assume that $\pi$ does not fall in case (AR). We have also defined Hodge polygons, and the generalized Rapoport locus is by definition the locus where this polygon is minimal. \\
We will now prove that this locus is dense.

\subsection{Lifting a module with filtration}

This is an intermediary section which contains some results concerning the existence of lifts of modules satisfying certain properties.

\begin{lemm} \label{lift_L}
Let $M$ be a free $k[[X]]$-module of rank $h$, and $N_1 \subset \dots \subset N_r \subset M$ be direct factors with $N_i$ of rank $d_i$. Let $\overline{L}$ be a $k$-vector subspace of $M \otimes_{k[[x]]} k$ of dimension $l$. There exists a lift $L$ of $\overline{L}$ such that in generic fiber the dimension of $L \cap N_i$ is $\max(0, l+d_i -h )$.
\end{lemm}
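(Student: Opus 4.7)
The plan is to parametrise lifts of $\overline L$ by matrices, translate the generic intersection conditions into rank conditions on submatrices, and then produce an explicit perturbation achieving the desired ranks.

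First, choose a basis $f_1, \dots, f_h$ of $M$ compatible with the filtration, so that $N_i = \langle f_1, \dots, f_{d_i}\rangle$ for every $i$. Present $\overline L$ as the column span of a matrix $\overline A \in M_{h \times l}(k)$ of rank $l$, and pick any lift $A_0 \in M_{h \times l}(k[[X]])$ of $\overline A$. For any $B \in M_{h \times l}(k[[X]])$, the matrix $A := A_0 + X B$ has rank $l$ pointwise over $\Spec(k[[X]])$ (since $A \bmod X = \overline A$ has rank $l$, hence some $l\times l$ minor is a unit in $k[[X]]$), so its column span $L \subset M$ is a rank-$l$ direct factor lifting $\overline L$. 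For each $i$, the projection $L \to M/N_i$ is encoded by the submatrix $A_{(i)}$ made of rows $d_i+1, \dots, h$ of $A$, whence
\[
\dim_{k((X))}(L \cap N_i)_{k((X))} = l - \mathrm{rank}_{k((X))}\, A_{(i)},
\]
and the desired conclusion is equivalent to $\mathrm{rank}_{k((X))}\, A_{(i)} = \rho_i := \min(l, h - d_i)$ for every $i$.

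Next, I would compute the relevant minors of $A_{(i)} = A_{0,(i)} + X B_{(i)}$. Any $\rho_i \times \rho_i$ minor is a polynomial in $X$ of degree at most $\rho_i$, and its $X^{\rho_i}$ coefficient is precisely the corresponding $\rho_i \times \rho_i$ minor of $B_{(i)}$. Therefore, provided $B_{(i)}$ admits some $\rho_i \times \rho_i$ submatrix of nonzero determinant, the corresponding minor of $A_{(i)}$ is a nonzero element of $k[[X]]$, forcing $\mathrm{rank}_{k((X))}\, A_{(i)} = \rho_i$.

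The problem thus reduces to the following finite-dimensional statement: produce $B \in M_{h \times l}(k[[X]])$ whose submatrix $B_{(i)}$ formed by its last $h-d_i$ rows has maximal rank $\rho_i$ for every $i = 1, \dots, r$. Over an infinite $k$ this is immediate, as the bad loci form proper Zariski-closed subsets of $\mathbb A^{hl}_k$ and a generic $B \in M_{h\times l}(k)$ works. Over a small residue field $k$, the same conclusion is achieved by allowing $B$ to have entries in $k[[X]]$: one can either use that $k[[X]]$ has infinite transcendence degree over $k$ and specialise formal indeterminates to algebraically independent elements of $k[[X]]$ (so that every nonzero polynomial in those indeterminates stays nonzero after specialisation), or construct $B$ explicitly with entries that are well-chosen distinct powers of $X$, so that each of the finitely many minor conditions is dominated by a single unambiguous monomial in $X$. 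The main subtlety of the proof lies precisely in this last step, guaranteeing the simultaneous nonvanishing of finitely many minor conditions on $B$ when the residue field is possibly finite; once such a $B$ is in hand, the required $L$ is the column span of $A_0 + X B$.
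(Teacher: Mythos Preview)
Your argument is correct and takes a genuinely different route from the paper's.

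The paper proceeds by induction on $r$. For $r=1$ it chooses a basis of $M$ adapted simultaneously to $N_1$ and to $\overline L$, and writes down an explicit lift by perturbing some basis vectors by $X$ times others, splitting into the cases $l+d_1 \le h$ and $l+d_1 > h$. For the inductive step it decomposes $\overline L$ into a piece lying in $\overline{N_r}$ and a complement, and reduces to a shorter flag inside a smaller module. Your approach, by contrast, parametrises all lifts at once as column spans of $A_0 + XB$, converts the generic intersection condition into the requirement that each bottom block $B_{(i)}$ have maximal rank, and then produces a single $B$ meeting all conditions simultaneously. This is cleaner and avoids the somewhat intricate case analysis and double induction of the paper; the paper's version, on the other hand, is entirely explicit at each step and makes the structure of the lift transparent.

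Two small remarks. First, for the sentence ``any $\rho_i\times\rho_i$ minor is a polynomial in $X$ of degree at most $\rho_i$'' to be literally true you should take $A_0$ to be the \emph{constant} lift $\overline A$ and $B$ with constant entries; with an arbitrary $A_0\in M_{h\times l}(k[[X]])$ the minors are only power series. Second, the ``main subtlety'' you flag over a finite residue field is not really a subtlety: the single matrix $B$ with $B_{h-j+1,\,j}=1$ for $1\le j\le l$ and zeros elsewhere has every bottom block $B_{(i)}$ of maximal rank $\min(l,h-d_i)$ over any field, so there is no need to pass to $k[[X]]$-coefficients or invoke transcendence-degree arguments.
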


\begin{proof}
We prove this result by induction on the integer $r$. Let us consider the case $r=1$. \\
Define $\overline{M} = M \otimes_{k[[X]]} k$, and let $s$ be the dimension of $\overline{N_1} \cap \overline{L}$. Then there exists a a basis $e_1, \dots, e_{d_1}$ of $N_1$ such that the reduction of $e_1, \dots, e_s$ is a basis for $\overline{N_1} \cap \overline{L}$. One can then complete in a basis $e_1, \dots, e_h$ of $M$ such that the reduction of $e_1, \dots, e_s, e_{d_1+1}, \dots, e_{d_1+l-s}$ form a basis for $\overline{L}$. \\
If $l+d_1 \leq h$, one defines $L$ to be generated by $e_1 + X e_{d_1+l-s +1}, \dots, e_s + X e_{d_1+l}, e_{d_1+1}, \dots, e_{d_1+l-s}$. \\
If $l+d_1 >h$, one defined $L$ to be generated by \[e_1 + X e_{d_1+l-s +1}, \dots, e_{h+s-d_1-l} + X e_{h}, e_{h+s-d_1-l+1}, \dots, e_s, e_{d_1+1}, \dots, e_{d_1+l-s}.\]
Now let us turn to the general case. Let $\overline{L_0}$ be a complementary subspace of $\overline{L} \cap \overline{N_k}$ inside $\overline{L}$. One will lift the direct sum $\overline{L_0} \oplus \overline{L} \cap \overline{N_k}$. One will take an arbitrary lift of $\overline{L_0}$; by doing so one reduces to the case where $\overline{L} \subset \overline{N_k}$. Let $s$ be the dimension of $\overline{L_1} := \overline{L} \cap \overline{N_1}$. One will then distinguish two cases. \\
If $s \leq h - d_k$, one defines a lift $L_1$ of $\overline{L_1} $ such that $L_1 \cap N_k = \{ 0 \}$ in generic fiber. Considering a complementary subspace $\overline{L}'$ of $\overline{L_1} $ in $\overline{L}$, one can use the induction hypothesis by considering the modules $N_2 \subset \dots \subset N_k \subset M$. \\
If $s > h - d_k$, let $e_1, \dots, e_h$ be a basis of $M$ adapted to the filtration $N_1 \subset \dots \subset N_k \subset M$. Assume also that the reduction of $e_1, \dots, e_s$ is a basis for $\overline{L_1} $. Let $\overline{L_0} $ be the vector subspace of $\overline{L_1}$ spanned by the reduction of $e_1, \dots, e_{h-d_k}$. Let us consider the lift $L_0$ of $\overline{L_0} $ spanned by $e_1 + X e_{d_k+1}, \dots, e_{h-d_k} + X e_h$. Let $\overline{L'} $ be a complementary subspace of $\overline{L_0} $ in $\overline{L}$.
To lift $\overline{L'}$, we are thus reduced to lift it in $M' = \Vect(e_{s+1},\dots,e_{d_k})$, endowed with the filtration $N_1 \cap M' \subset N_{k-1} \cap M' \subset M'$.
We are thus reduced to the case of a smaller $k$, and by induction we are done.

\end{proof}

In the polarized case, one will use the following lemma.

\begin{lemm} \label{lift_C}
Let $M$ be a free $k[[X]]$-module of rank $2g$ with a perfect pairing, and $N \subset M$ be a totally isotropic direct factor rank $g$. Let $\overline{L}$ be a totally isotropic $k$-vector subspace of $M \otimes_{k[[x]]} k$ of dimension $g$. There exists a lift $L$ of $\overline{L}$ such that in generic fiber the dimension of $L \cap N$ is $0$.
\end{lemm}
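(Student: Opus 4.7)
The plan is to mimic the explicit lift of Lemma \ref{lift_L}, but paying additional attention to preserve isotropy. Set $s := \dim_k(\overline L \cap \overline N)$.

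My first step will be to choose a symplectic basis $e_1, \ldots, e_g, f_1, \ldots, f_g$ of $M$ (so $\langle e_i, f_j \rangle = \delta_{ij}$ and $\langle e_i, e_j \rangle = \langle f_i, f_j \rangle = 0$) satisfying simultaneously
\[ N = \langle e_1, \ldots, e_g \rangle \qquad \text{and} \qquad \overline L = \langle \bar e_1, \ldots, \bar e_s, \bar f_{s+1}, \ldots, \bar f_g \rangle. \]
Over $\overline M$ this is a standard normal form for a pair of Lagrangians meeting in dimension $s$: one takes a basis of $\overline L \cap \overline N$, extends it to a basis of $\overline N$, and then selects a Lagrangian complement $\overline N^\ast$ of $\overline N$ in $\overline M$ whose intersection with $\overline L$ supplements $\overline L \cap \overline N$ inside $\overline L$; its existence reduces to the choice of a Lagrangian complement of the image of $\overline L \cap \overline N$ in the symplectic quotient $(\overline L \cap \overline N)^\perp / (\overline L \cap \overline N)$, which may be arranged to be compatible with the image of $\overline L$. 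To lift this basis to $M$, I will first lift the basis of $\overline N$ to a basis of $N$ (possible since $N$ is a free direct summand), then lift each $\bar f_j$ to $\hat f_j \in M$ satisfying $\langle e_i, \hat f_j \rangle = \delta_{ij}$ via the perfect pairing $N \times (M/N) \to k[[X]]$, and finally adjust by $\hat f_j \mapsto \hat f_j + \sum_i c_{ij} e_i$ with $c_{ij} := \langle \hat f_i, \hat f_j \rangle$ for $i < j$ and $c_{ij} := 0$ otherwise, which kills the remaining alternating matrix uniformly in any characteristic.

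With the symplectic basis in hand, the candidate lift will be
\[ L := \langle e_1 + X f_1, \ldots, e_s + X f_s, f_{s+1}, \ldots, f_g \rangle. \]
Since $e_i = (e_i + X f_i) - X f_i$, these generators together with $f_1, \ldots, f_s, e_{s+1}, \ldots, e_g$ form a $k[[X]]$-basis of $M$, so $L$ is a locally direct factor of rank $g$. Isotropy reduces to checking $\langle e_i + X f_i, e_j + X f_j \rangle = X \delta_{ij} - X \delta_{ij} = 0$ for $i, j \leq s$ and $\langle e_i + X f_i, f_j \rangle = \delta_{ij} = 0$ for $i \leq s < j$, so $L$ is Lagrangian. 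Reducing modulo $X$ gives exactly $\overline L$ by construction. Finally, writing a general element of $L_\eta$ as $\sum_{i \leq s} a_i (e_i + X f_i) + \sum_{j > s} b_j f_j$, membership in $N_\eta = \langle e_1, \ldots, e_g \rangle \otimes k((X))$ forces the $f$-components $X a_i$ and $b_j$ to vanish, and invertibility of $X$ in $k((X))$ yields $L_\eta \cap N_\eta = 0$.

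The main obstacle will be the normalisation step: arranging $N$, the alternating form, and the subspace $\overline L$ in their standard shapes simultaneously, and then lifting the resulting symplectic basis of $\overline M$ to one of $M$ (the final alternating-matrix adjustment over $k[[X]]$ needs a short argument, but one that does work uniformly, including in characteristic two). Once this book-keeping is in place, the perturbation $e_i \mapsto e_i + X f_i$ for $i \leq s$ is essentially forced by the requirement to remain Lagrangian while missing $N$ in the generic fiber, and the remaining verifications reduce to one-line calculations.
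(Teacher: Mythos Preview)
Your proof is correct. The verification of each step---the normal form for two Lagrangians over $k$, the lift of the symplectic basis to $M$ (including the strictly-upper-triangular correction, which indeed works in all characteristics), and the final explicit perturbation---all go through as you describe.

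The route differs from the paper's in organization rather than in essence. The paper argues in two steps: first the special case $\overline L=\overline N$, where the lift is given by the columns of $\left(\begin{smallmatrix} I_g\\ XA\end{smallmatrix}\right)$ for any invertible symmetric $A$; then the general case is reduced to this by lifting a complement $\overline{L_1}$ of $\overline L\cap\overline N$ inside $\overline L$ to an isotropic $L_1\subset M$ and passing to the symplectic quotient $L_1^\perp/L_1$. You instead bring both $N$ and $\overline L$ into simultaneous normal form from the start and write down the lift $L=\langle e_1+Xf_1,\dots,e_s+Xf_s,f_{s+1},\dots,f_g\rangle$ in one stroke (this is the paper's construction with $A=I_g$, restricted to the relevant block). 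Your approach is cleaner as a single argument; the paper's decomposition has the mild advantage that the freedom in $A$ is visible and is later reused (in the proof of Proposition~\ref{prop38} they take $A$ symmetric of rank~$1$ to control the intersection dimension more finely).
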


\begin{proof}
Let us first consider the case where $\overline{L} = N \otimes_{k[[x]]} k$. Let $e_1, \dots, e_g$ be a basis of $N$, completed in a basis $e_1, \dots, e_{2g}$ of $M$, such that the pairing $\langle e_i , e_j \rangle$ is $1$ if $j = g+i$ and $0$ otherwise. One will define the module $L$ to be generated by the columns of the matrix $\left( \begin{array}{c}
I_g \\
XA 
\end{array} \right) $, where $I_g$ is the identity matrix and $A$ is any invertible symmetric matrix of size $g$. \\
Let us now turn to the general case. Define $\overline{M} := M \otimes_{k[[x]]} k$, $\overline{N} := N \otimes_{k[[x]]} k$ and $\overline{L_0} = \overline{L} \cap \overline{N}$. Let $\overline{L_1}$ be a complementary subspace of $\overline{L_0}$ in $\overline{L}$. Let $L_1$ be a totally isotropic lifting of $\overline{L_1}$ in $M$. One will look for a lift $L$ inside $L_1^\bot $ and containing $L_1$. One is then led to consider the module $M_1 := L_1^\bot / L_1$, which is free of rank $2(g-s)$, where $s$ is the dimension of $\overline{L_1}$. The module $N \cap L_1^\bot$ is free of rank $g-s$, and so is its projection onto $M_1$. By doing so, one is reduced to the previous case.
\end{proof}

\subsection{Density of the generalized Rapoport locus}

In this section, we use the previous lemmas to prove :

\begin{theor}
\label{thrRap}
The generalized Rapoport locus is (open and) dense.
\end{theor}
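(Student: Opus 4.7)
Openness of the generalized Rapoport locus is immediate from the semi-continuity of $\Hdg_\pi$ (Proposition \ref{prop222}) together with the inequalities $\Hdg_\pi \geq \PR$ of Propositions \ref{prop218}--\ref{prop221}: as $\PR$ is locally constant, the locus where $\Hdg_\pi$ attains its minimum value $\PR$ is open. For density, my plan is to show that every geometric point $x \in X_\kappa(k)$ admits a formal deformation $\Spec k[[t]] \to X$ extending $x$ whose generic fiber lies in the generalized Rapoport locus. Since the level $\eta$ is prime-to-$p$ and lifts trivially, Serre--Tate theory reduces this to deforming $A[p^\infty]$ together with its $\mathcal O_B$-action, polarization, and full Pappas--Rapoport data. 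Via the Morita decomposition $A[p^\infty] = \prod_{\pi' \in \mathcal P} \mathcal O_{R_{\pi'}}^{n_{\pi'}} \otimes G_{\pi'}$ I will deform only the factor $G_\pi$ at the fixed $\pi$ and leave the others constant; by assumption $\pi$ lies in case (C), (AL) or (AU).

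By Grothendieck--Messing, exactly as in the proof of Theorem \ref{thr229}, deforming $G_\pi$ with its extra structure amounts, for each embedding $\tau$ of $L^{ur}$, to producing a lift of the PR flag
\[ 0 \subset \omega_\tau^{[1]} \subset \cdots \subset \omega_\tau^{[e]} = \omega_{G,\tau} \]
inside $\mathcal H_\tau \otimes_k k[[t]]$, compatible with the PR conditions and, in cases (C) and (AU), with the polarization pairing. In case (AU) the pairing identifies $\mathcal H_\tau$ with $\mathcal H_{\overline\tau}^\vee$, so it is enough to construct the lift for one representative of each pair $\{\tau,\overline\tau\}$ and to take orthogonals for the other; case (AL) imposes no pairing constraint; only case (C) demands genuinely isotropic lifts. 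The crucial observation, following \cite{BH}, is that over a point $s \to X_\kappa$, $\Hdg_{\pi,\tau}(s)$ is determined by the dimensions of the intersections $\omega_{G,\tau,s} \cap \mathcal H_{\tau,s}[Q_j]$ for $j=1,\dots,e$, and the equality $\Hdg_{\pi,\tau} = \PR_\tau$ holds exactly when these intersections have the minimum dimension $d_{\tau,1}+\cdots+d_{\tau,j}$ allowed by the PR filtration, i.e. when $\omega_{G,\tau,s} \cap \mathcal H_{\tau,s}[Q_j] = \omega_{\tau,s}^{[j]}$.

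I will therefore lift the PR filtration inductively, producing $\widetilde\omega_\tau^{[1]},\dots,\widetilde\omega_\tau^{[e]}$ along the same scheme as in the proof of Theorem \ref{thr229}, but at each stage applying Lemma \ref{lift_L} (in cases (AL) and (AU)) or Lemma \ref{lift_C} (in case (C)) to the quotient $(\mathcal H_\tau \otimes k[[t]]/\widetilde\omega_\tau^{[\ell]})[Q^\ell]$ equipped with its natural flag of $\pi$-stable submodules. These lemmas yield a lift $\widetilde\omega_\tau^{[\ell+1]}$ whose generic-fiber intersection with every term of the flag is the minimum allowed by dimensions, while preserving the PR constraints and, in case (C), the isotropy supplied by Corollary \ref{proppolQ}. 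Over $\Spec k((t))$ the resulting $\omega_\tau$ then satisfies $\omega_\tau \cap \mathcal H_\tau[Q_j] = \omega_\tau^{[j]}$ for every $j$, which forces $\Hdg_{\pi,\tau}=\PR_\tau$ for all $\tau$ and hence $\Hdg_\pi = \PR$: the generic point lies in the generalized Rapoport locus.

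The main obstacle is combinatorial: at each inductive step one has to feed into Lemma \ref{lift_L} or \ref{lift_C} the correct flag of $\pi$-stable submodules of the relevant quotient so that the minimal-intersection property they guarantee translates precisely into the desired equality $\omega_\tau \cap \mathcal H_\tau[Q_j] = \omega_\tau^{[j]}$, rather than to a weaker generic-position statement still compatible with a strictly larger Hodge polygon. In case (C) one must additionally check that the isotropy of $\widetilde\omega_\tau^{[\ell+1]}$ with respect to the induced pairing $h_{\ell+1}$ of Corollary \ref{proppolQ} is compatible with the global $\lambda$-compatibility of the PR datum, which should follow from the formalism of section \ref{sect233}.
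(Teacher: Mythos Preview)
Your overall architecture matches the paper's: reduce via Serre--Tate and Grothendieck--Messing to lifting the PR flag of $G_\pi$ over $k[[t]]$, handle (AU) by duality as (AL), and build the lift inductively using Lemma~\ref{lift_L} in cases (AL)/(AU) and Lemma~\ref{lift_C} in case (C).

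There is, however, a genuine error in your characterization of the generalized Rapoport locus. You assert that $\Hdg_{\pi,\tau}=\PR_\tau$ is equivalent to $\omega_\tau\cap\mathcal H_\tau[\pi^j]=\omega_\tau^{[j]}$ for all $j$, i.e.\ to $\dim\omega_\tau[\pi^j]=d_{\tau,1}+\cdots+d_{\tau,j}$. This is false unless the $d_{\tau,i}$ happen to be weakly decreasing. Take $e=2$, $d_1=1$, $d_2=3$: since $\pi$ maps the $4$-dimensional $\omega_\tau$ into the $1$-dimensional $\omega_\tau^{[1]}$, one always has $\dim\omega_\tau[\pi]\geq 3>d_1$, so your target $\omega_\tau[\pi]=\omega_\tau^{[1]}$ is simply unattainable. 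The correct criterion, stated in the paper as Lemma~\ref{lemma36}, is
\[
\dim\omega_\tau[\pi^j]=\max_{0<k_1<\cdots<k_j\leq e}\bigl(d_{k_1}+\cdots+d_{k_j}\bigr),
\]
the sum of the $j$ largest among the $d_i$. Thus the ``combinatorial obstacle'' you flag is not mere bookkeeping: you are aiming at the wrong target, and no choice of flag fed into Lemma~\ref{lift_L} will produce it.

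The repair is precisely Lemma~\ref{lemma35} of the paper: at stage $\ell$ one lifts $\omega_\tau^{[\ell+1]}/\omega_\tau^{[\ell]}$ inside $M_\ell:=\pi^{-1}\widetilde\omega_\tau^{[\ell]}/\widetilde\omega_\tau^{[\ell]}$ against the flag $N_j=(\mathcal H_\tau[\pi^j]\cap\pi^{-1}\widetilde\omega_\tau^{[\ell]}+\widetilde\omega_\tau^{[\ell]})/\widetilde\omega_\tau^{[\ell]}$, applies Lemma~\ref{lift_L}, and checks by induction that generically $\dim\widetilde\omega_\tau^{[\ell+1]}[\pi^j]=\max_{k_1<\cdots<k_j\leq\ell+1}d_{k_1}+\cdots+d_{k_j}$; at $\ell+1=e$ this is the Rapoport condition by Lemma~\ref{lemma36}. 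In case (C) all $d_i=g$, so your formulation happens to coincide with the correct one and Lemma~\ref{lift_C} applies as you describe.
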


To prove this theorem, we do it for one $\pi$ at a time and find a lift "locally", i.e. for $G_\pi$. We will thus consider the possible cases : (AL) or (AU) and (C).

\begin{rema}
Again, the analogous result in case (AR) is false, as shown in the exemples in Appendix \ref{AppE}.
\end{rema}


\subsubsection{Cases (AL) and (AU)}
\label{sect321}

\begin{proof}
Let $x$ be a point of $X_\kappa := X\otimes \kappa$ and $G = G_\pi$ the associated $p$-divisible group. We want to prove that there exists a deformation of $x$ which lies in the generalized Rapoport locus. Thanks to Grothendieck-Messing, we will deform the Hodge filtration of $G$. For each $\tau \in \mathcal{T}$, one will use the previous lemma to deform $\omega_\tau$. By duality in case (AU), one automatically has a deformation of $\omega_{\overline{\tau}}$, hence of the whole of $\omega_G$. \\
Let us now describe the way to lift $\omega_\tau$. Let $D=H^1_{dR, \tau}$ ; it is a free as a $k[X]/(X^e)$ module. Let $M$ be the ($\tau$-part of the) evaluation of the crystal at 
$k[[t]]$; it is free as a $k[[t]][X]/(X^e)$, and reduces to $D$ modulo $t$. \todo{Notations à harmoniser. Donner un peu plus de détails} To lift $\omega_\tau$, one will successively lift $\omega_\tau^{[1]}, \dots, \omega_\tau^{[e]}$. First, one will 
lift $\omega_\tau^{[1]}$ in $M[X]$, the $X$-torsion of $M$. Let $L_1$ be any such lift. Then in order to lift $\omega_\tau^{[2]}$, one will work in $M_1 := X^{-1} L_1 / L_1$. One has 
the submodule $N_1 = M[X]/ L_1$. One will use the lemma \ref{lift_L} to lift $\omega_\tau^{[2]}$ to a module $L_2$ in such a way that the dimension of $L_2 \cap M[X]$ in generic 
fiber is $\max(d_1,d_2)$. Then one will consider $M_2 := X^{-1} L_2 / L_2$. One has the submodules $N_1' = (M[X] + L_2)/L_2$ and $N_2' = (M[X^2]\cap X^{-1}L_2 + L_2) / L_2$. Again, one will use the lemma \ref{lift_L}, and get a lift $L_3$ of $\omega_\tau^{[3]}$ such that in generic fiber the dimensions of $L_3 \cap M[X]$ and $L_3 \cap M[X^2]$ are respectively $\max(d_1,d_2, d_3)$ and $\max(d_1+d_2, d_1 + d_3, d_2+d_3)$. \\

\begin{lemm}
\label{lemma35}
There exists a lift $(\widetilde{\omega_\tau^{[i]}})$ of $\omega_\tau^{[1]} \subset \dots \subset \omega_{\tau}^{[e]} \subset D$ in $D\otimes k[[t]]$ satisfying the Pappas-Rapoport conditions and such that
\[ \dim (\widetilde{\omega_\tau^{[i]}})[X^j] = \max_{
0 <k_1 < \dots < k_j \leq i} d_{k_1} + \dots + d_{k_j}.\]
\end{lemm}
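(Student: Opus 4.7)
The plan is to prove the lemma by induction on $i$, constructing the lifts $L_i := \widetilde{\omega_\tau^{[i]}}$ one at a time, generalizing the explicit procedure sketched just before the statement for $i = 1, 2, 3$. Since we are working in characteristic $p$ over $k[[t]]$, the Pappas-Rapoport condition reduces to $X \cdot L_i \subset L_{i-1}$. For $i = 1$, any lift of $\omega_\tau^{[1]}$ inside the $X$-torsion of $D \otimes_k k[[t]]$ will do, and then $L_1 = L_1[X]$ has generic-fiber dimension $d_1$ as required.

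For the inductive step, suppose $L_{i-1}$ has been constructed. I would work inside the free $k[[t]]$-module
\[ M_i := X^{-1} L_{i-1} / L_{i-1}, \]
which is killed by $X$, and introduce the flag of locally direct factors
\[ N_j := \bigl( (M[X^j] \cap X^{-1} L_{i-1}) + L_{i-1} \bigr)/L_{i-1}, \quad 1 \leq j \leq i-1, \]
where $M := D \otimes_k k[[t]]$. Denoting by $\overline{\omega}$ the image of $\omega_\tau^{[i]}/\omega_\tau^{[i-1]}$ in $M_i \otimes_{k[[t]]} k$, Lemma \ref{lift_L} produces a locally direct factor $\overline{L_i} \subset M_i$ lifting $\overline{\omega}$, whose generic-fiber intersection with each $N_j$ has the minimal dimension $\max(0, \dim \overline{\omega} + \dim N_j - \dim M_i)$. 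I would then take $L_i$ to be the preimage of $\overline{L_i}$ in $X^{-1} L_{i-1}$; it contains $L_{i-1}$, satisfies $X \cdot L_i \subset L_{i-1}$, and reduces to $\omega_\tau^{[i]}$ modulo $t$.

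To verify the dimension formula for $L_i[X^j]$, the key observation is the short exact sequence
\[ 0 \longrightarrow L_{i-1}[X^j] \longrightarrow L_i[X^j] \longrightarrow \overline{L_i} \cap N_j \longrightarrow 0, \]
which follows from the fact that for $x \in L_i$ one has $X^j x = 0$ if and only if $X x \in L_{i-1}[X^{j-1}]$, together with the identification of $N_j$ as the image in $M_i$ of $X^{-1}(L_{i-1}[X^{j-1}])$. Combined with the induction hypothesis for $\dim L_{i-1}[X^j]$ and the values $\dim \overline{L_i} \cap N_j$ dictated by Lemma \ref{lift_L}, a combinatorial calculation then identifies $\dim L_i[X^j]$ with the top-$j$ sum $\max_{0 < k_1 < \dots < k_j \leq i}(d_{k_1}+\dots+d_{k_j})$ in generic fiber.

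The main obstacle I anticipate is this final bookkeeping step: one has to check that the dimensions of the flag $(N_j)_{j=1}^{i-1}$ and of the ambient module $M_i$ combine with the minimal-intersection formula from Lemma \ref{lift_L} in exactly the right way to reproduce the increment, when passing from $i-1$ to $i$, of the top-$j$-sum function. This is slightly delicate but essentially formal; everything else is routine linear algebra once the inductive framework is in place.
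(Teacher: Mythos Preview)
Your proposal is correct and follows essentially the same approach as the paper: the same inductive framework, the same ambient module $M_i = X^{-1}L_{i-1}/L_{i-1}$, the same flag $N_j$, the same appeal to Lemma \ref{lift_L}, and the same preimage construction. Your short exact sequence for $L_i[X^j]$ is exactly the relation the paper uses (written there as $\dim L_{i+1}[X^j] = \dim(L\cap N_j) + \dim L_i[X^j]$), and the combinatorial step you flag as ``slightly delicate'' is precisely the paper's final computation $\max(\dim L_i[X^j],\, d_{i+1} + \dim L_i[X^{j-1}])$.
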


\begin{proof}
Indeed by induction we have proven the result for $i = 1,2,3$. Suppose it is true for $i \geq 1$, and denote
\[ M_i = X^{-1}L_i/L_i, \quad N_j = (M[X^j]\cap X^{-1}L_i + L_i)/L_i \quad \forall j \leq i, \quad \overline L = \omega_\tau^{[i+1]}/\omega_\tau^{[i]}.\]
A direct calculation shows that $\dim_{k[[t]]} M_i = h$, $\dim_k \overline L = d_{i+1}$, and $\dim_{k[[t]]} N_j = h - \dim (L_i[X^j]\backslash L_i[X^{j-1}])$.
We use the lemma \ref{lift_L} with these data to find $L$ such that
\[ \dim_{k[[t]]} L\cap N^j = \max(0,d_{i+1} - \dim (L_i[X^j]\backslash L_i[X^{j-1}]).\]
Let $\widetilde{\omega_\tau^{[i]}}$ the preimage of $L$ via $X^{-1}L_i \fleche M_i$, we thus have
\begin{eqnarray*} \dim \widetilde{\omega_\tau^{[i]}}[X^j] 
= \dim L_i\cap N^j + \dim L_i[X^j] 
= \max(L_i[X^j], d_{i+1} + \dim L_i[X^{j-1}]) 
\\
= \max_{
0 <k_1 < \dots < k_j \leq i+1} d_{k_1} + \dots + d_{k_j}.\end{eqnarray*}
Thus the lemma is proved.
\end{proof} 

\begin{lemm}
\label{lemma36}
Let $S$ be a $\kappa$-scheme and $\omega^{[e]} \subset (\mathcal O_S \otimes_{\mathcal O_{F^{ur}}} \mathcal O_F)^h$ be a sub-$\mathcal O_S \otimes \mathcal O_F$-module.
Then $\omega^{[e]}$ is in the generalized Rapoport locus for the datum $(d_i)_{1 \leq i \leq e}$ (i.e. $\Hdg(\omega^e) = \PR(d_i)$) if and only if for all $j$, \[ \dim \omega^{[e]}[X^j] = \max_{
0 <k_1 < \dots < k_j \leq e} d_{k_1} + \dots + d_{k_j}.\]
\end{lemm}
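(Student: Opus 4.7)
The plan is to reduce to a pointwise statement over $S$: passing to a geometric point, take $S = \Spec k$ with $k$ an algebraically closed field of characteristic $p$. Then $\omega^{[e]}$ becomes a finite-dimensional $k[X]/X^e$-module, and by the structure theorem for modules over this PID quotient there is a unique partition $\lambda = (\lambda_1 \geq \dots \geq \lambda_r)$ with $\lambda_i \leq e$ such that $\omega^{[e]} \simeq \bigoplus_{i=1}^r k[X]/X^{\lambda_i}$. This partition is equivalent to the data of the sequence $h_j := \dim_k \omega^{[e]}[X^j]$ for $j = 0, \dots, e$, via the identity
\[ h_j - h_{j-1} = \#\{i : \lambda_i \geq j\} = (\lambda^T)_j. \]

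First, I would invoke the definition of the Hodge polygon recalled in \cite{BH} and summarized in Section \ref{sectdecpol}: the polygon $\Hdg(\omega^{[e]})$ depends only on this Jordan type $\lambda$, equivalently only on the sequence $(h_j)_{j=0,\dots,e}$. Next, I would unpack the definition of the PR polygon and observe that, by construction, $\PR$ is the polygon attached to the ``standard'' Jordan type $\lambda_{\PR} := (d^*)^T$, where $d^* = (d^*_1 \geq \dots \geq d^*_e)$ is the decreasing rearrangement of $(d_1, \dots, d_e)$; concretely $\PR$ coincides with $\Hdg(V_{\PR})$ for $V_{\PR} := \bigoplus_i k[X]/X^{(\lambda_{\PR})_i}$. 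A direct combinatorial computation with this standard module yields
\[ \dim_k V_{\PR}[X^j] \;=\; d^*_1 + \dots + d^*_j \;=\; M_j \]
for every $j = 0, \dots, e$.

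Combining these two observations, the equality $\Hdg(\omega^{[e]}) = \PR$ becomes equivalent to $\omega^{[e]}$ and $V_{\PR}$ having the same Jordan type, which by the translation above is in turn equivalent to $h_j = M_j$ for every $j$, giving both directions of the lemma. Note moreover that the polygon inequality $\Hdg(\omega^{[e]}) \geq \PR$ from Propositions \ref{prop218}--\ref{prop221} already gives $h_j \geq M_j$ automatically, so the generalized Rapoport locus is precisely the place where all these inequalities are simultaneously saturated. The main obstacle is the precise identification of the PR polygon with the Hodge polygon of $V_{\PR}$, and more generally the translation between polygons and Jordan types in the ramified setting; both reduce to a direct inspection of the definitions in \cite{BH} together with the elementary identity $\dim V_{\PR}[X^j] = M_j$ above.
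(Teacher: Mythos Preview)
Your proof is correct and follows essentially the same approach as the paper's. The paper's own argument is extremely terse: it simply reorders the $d_i$ in decreasing order so that the maximum becomes $d_1 + \dots + d_j$, and then asserts that ``$\dim \omega^{[e]}[X^j] = d_1 + \dots + d_j$ means exactly $\Hdg(i) = \PR(i)$''. Your version unpacks this via Jordan types and the structure theorem for $k[X]/X^e$-modules, which is exactly the content behind the paper's one-line assertion; both proofs ultimately defer the identification of the two polygons to the definitions in \cite{BH}.
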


\begin{proof}
As the two properties are independent of the ordering of the values $(d_i)$ suppose to simplify $d_1 \geq d_2 \geq \dots \geq d_e$.
Then the last proposition means $\dim \omega^{[e]}[X^j] = d_1 + \dots + d_j$ which means exactly $\Hdg(i) = \PR(i)$.
\end{proof}

Reducing inductively  the datum given in lemma \ref{lemma35} to $k[[t]]/(t^n)$, one get a map by Grothendieck-Messing
(note that $k[[t]]/(t^n) \fleche k[[t]](t^{n-1})$ is endowed with nilpotent divided powers),
\[ \Spf(k[[t]]) \fleche X_\kappa.\]
But as $X_\kappa$ is a scheme, this induces a map $\widetilde{x} : \Spec(k[[t]]) \fleche X_\kappa$, generizing our point $x$, and such that in generic fiber the module 
$\widetilde{x}^*\omega_\tau[1/t]$ is given by $\widetilde{\omega_\tau}[1/t]$, thus lies in the generalised Rapoport locus by Lemma \ref{lemma36}.
\end{proof}

\subsubsection{Case (C) }

\begin{proof}
One will keep the same notations as the previous section. One will lift the module $\omega_\tau$ in $M$. But in this case, the module $M$ has a perfect pairing, and one needs to consider totally isotropic lifts. \\
One starts by considering the module $M_1 := M[X]$. This module has a perfect pairing $h_1$ (induced by the one on $M$, see section \ref{sect233} and remark \ref{isot}), and we can lift the module $\omega_\tau^{[1]}$ to a module $L_1 \subset M_1$, which will still be totally isotropic for $h_1$. Then one consider the module $M_2 := X^{-1} L_1 / L_1$. Since $L_1$ is totally isotropic in $M_1$ for $h_1$, the pairing $h_2$ induces a pairing on $M_2$. Using the lemma \ref{lift_C}, one will take a lift $L_2$ of $\omega_\tau^{[2]}$, such that $L_2 / L_1$ is totally isotropic in $M_2$ for $h_2$, and disjoint from $M[X]/L_1$ in generic fiber. \\
One repeat this process and gets lifts $L_1 \subset L_2 \subset \dots \subset L_e$. In generic fiber, the multiplication by $X$ is an isomorphism between $L_{i+1} / L_i$ and $L_i / L_{i-1}$ for every $1 \leq i \leq e-1$, and the lift $L_e$ of $\omega_\tau$ is thus generically free as a $\mathcal O_F \otimes_{\tau} k[[t]] = k[[t]][X]/(X^e)$-module. 
As before, by Grothendieck-Messing this leads to an algebraisable map
\[ \Spf(k[[t]]) \fleche X_\kappa,\]
generizing $x$, and whose generic fiber lies in the Rapoport locus.
\end{proof}

\subsection{Futher strata}

As the Hodge stratification is constructed using the Nilpotent cone of some $\GL_n$, for which the stratification is a strong stratification, we can investigate the same question for $X$. First recall the definition of a strong stratification.

\begin{defin}
Let $X$ be a topological space. A (\textit{weak}) stratification of $X$, with respect to a partially ordered set $(I, \leq)$, 
 is a decomposition
 \[ X = \coprod_{i \in I} X_i,\]
 such that $\overline{X_i} \subset \coprod_{j \leq i} X_j$. A (\textit{weak}) stratification is a \textit{strong} stratification if moreover
 \[ \overline{X_i} = \coprod_{j \leq i} X_j.\]
\end{defin}

\begin{example}
\begin{enumerate}
\item In the case of a unramified PEL datum, the Hodge stratification of $X_\kappa$ is a \textit{strong} stratification (this is trivial as there is only one stratum).
\item Still in the case of an unramified PEL datum, the Newton stratification and the Ekedahl-Oort stratification of $X_\kappa$ are strong stratifications (see  \cite{WedVieh} Theorem 2 and \cite{Ham} Theorem 1.1).
\end{enumerate}
\end{example}

\begin{prop}
In general, the Hodge stratification is not a strong stratification.
\end{prop}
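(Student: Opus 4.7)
The plan is to establish the proposition by exhibiting a single explicit counter-example; since the statement only claims failure ``in general'', one PEL datum and one stratum suffice.

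First, I would reduce to a purely local question. By the local model diagram used throughout Pappas--Rapoport theory (and implicit in the proof of Theorem~\ref{thr229}), the Hodge stratification on $X_\kappa$ is étale-locally pulled back from the Hodge stratification on a local model $\mathcal{M}_\pi$ attached to a single prime $\pi \in \mathcal{P}$. Since the authors themselves observe in the introduction that for ramification index $e = 2$ the Hodge stratification behaves well outside case (AR), I would choose $\pi$ in case (AL), (AU), or (C) with $e \geq 3$, taking the smallest possible value $e = 3$.

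Second, I would pick parameters small enough to be computable by hand but rich enough to produce at least three Hodge strata. A natural choice is case (AL) with $h' = 2$, $e = 3$, and a signature $(d_{\tau,1}, d_{\tau,2}, d_{\tau,3})$ such that not all entries are equal (so that the PR filtration is a genuine constraint). The local model then parametrises flags
\[
0 \subset \omega^{[1]} \subset \omega^{[2]} \subset \omega^{[3]} \subset \mathcal{H}_\tau
\]
in a free $k[X]/(X^3)$-module of rank $2$, subject to the rank conditions $\dim(\omega^{[j]}/\omega^{[j-1]}) = d_{\tau,j}$ and $X\,\omega^{[j]} \subset \omega^{[j-1]}$. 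A direct matrix calculation yields the irreducible components of the special fiber, and on each component one reads off the Jordan type of $X$ acting on $\omega = \omega^{[3]}$, hence the Hodge polygon via Lemma~\ref{lemma36}.

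Third, from this component decomposition, I would identify three Hodge polygons $\nu_1 < \nu_2 < \nu_3$ all realised on the local model, such that the $\nu_3$-stratum lies inside a closed irreducible component avoiding every generic point of the components meeting $X_{\nu_2}$. Then $\overline{X_{\nu_2}}$ does not contain $X_{\nu_3}$, violating the condition $\overline{X_{\nu_2}} = \coprod_{\nu \geq \nu_2} X_\nu$ even though the weak containment from semicontinuity (Proposition~\ref{prop222}) is fine. Transferring this local failure to $X_\kappa$ through the local model diagram completes the argument.

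The main obstacle is combinatorial rather than technical: one must arrange $(e, d_\bullet, h)$ so that the geometric specialisation relations induced by the component structure of $\mathcal{M}_\pi \otimes \kappa$ are genuinely incompatible with the dominance order on the Jordan types of $X$. For the most naive choices (for instance $e = 2$, or $e = 3$ with signature $(1,1,1)$ in a low-height case), the stratification accidentally turns out to be strong, and the counter-example only emerges after a careful adjustment of the parameters and, possibly, a small case analysis over the irreducible components.
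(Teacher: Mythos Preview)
Your overall direction—reduce to a local picture in case (AL) with $e=3$—matches the paper's, but two concrete choices diverge from it and one is a genuine misstep.

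First, the signature. The paper's example uses $(d_1,d_2,d_3)=(2,2,2)$, \emph{all equal}, so your stipulation ``not all entries are equal'' rules out precisely the configuration that works. With $h'=2$ and a non-constant signature such as $(2,1,1)$ one checks that $\omega^{[1]}$ is forced to coincide with $\mathcal H_\tau[\pi]$ (both have dimension $2$), so $\dim\omega[\pi]=2$ is constant across the moduli and at most two Hodge polygons occur, not the three your scheme requires. In fact the paper does not fix a small $h'$ at all: it works directly on the Spaltenstein-type moduli of pairs $((V_i),\pi)$, namely a fixed flag $0\subset V_1\subset V_2\subset V_3=K^6$ with $\dim V_i=2i$ together with a nilpotent $\pi$ satisfying $\pi(V_i)\subset V_{i-1}$.

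Second, the mechanism. The paper does not analyse irreducible components of the special fibre—in case (AL) the model is smooth by Theorem~\ref{thr229}, so there is no non-trivial component structure to exploit. Instead it gives a direct obstruction at a single explicit point: for a concrete $\pi_1$ in the $(4,2)$-stratum, any PR-compatible generisation to the $(3,3)$-stratum would have $\pi^2=0$ while still sending $V_2$ onto $V_1$ (by semicontinuity of the Hodge polygon of $\pi|_{V_2}$), forcing $\pi(V_3)\subset V_1$ and hence $\dim\ker\pi\ge 4$, which contradicts the $(3,3)$ Jordan type. Two strata and one point suffice; no three-term chain or component bookkeeping is needed.
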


\begin{proof}
This has nothing to with do with abelian varieties but rather with the space of partial flag on a fixed space together with a nilpotent operator (called \textit{Spaltelstein Varieties}).
Let $K$ be any field, $V = K^6$ and suppose given a full flag
\[ 0 \subset V_1 \subset V_2 \subset V_3 = V,\]
with $\dim_K V_i = 2i$. Suppose moreover that there is $\pi \in \End_K(V)$ such that $\pi(V_i) \subset V_{i-1}$ for all $i \geq 1$. This corresponds to (the local model of) a Pappas-Rapoport datum (AL) with $e = 3$ and $d_1 = d_2 = d_3 = 2$.
Fix a basis $e_1,\dots,e_6$ of $V$ such that $e_1,e_2$ is a basis of $V_1$, $e_1,\dots,e_4$ a basis of $V_2$. Then set the following choices for $\pi$ in this basis
\[
\pi_1 = \left(
\begin{array}{ccc}
0  & I_2  &  0 \\
 0 &  0 &  0 \\
 0 &  0 & 0  
\end{array}
\right) \quad \text{and} \quad 
\pi_2 = \left(
\begin{array}{cc}
0  & 
\left(
\begin{array}{ccc}
 1 &0 & 0 \\
0 & 1 &  0\\
 0 & 0 &   1
\end{array}
\right) \\
0 & 0
\end{array}
\right).
\]
In both cases $\pi$ satisfies the condition of a Pappas-Rapoport datum, with $\pi_j^2 = 0$. The Hodge polygons are associated with the partitions $(4,2)$ and $(3,3)$ of $6$, and $(3,3) \leq (4,2)$, but there exists no deformation from $((V_i),\pi_1)$ to a space with Hodge polygons $(3,3)$ which satisfies the Pappas-Rapoport condition.
Indeed, if it were the case then the Hodge polygon of $(\pi_1)_{|V_2}$ will descend by generisation, thus would remain the same and thus the $\pi$-torsion of the deformation should intersect $V_2$ only along $V_1$, and $\pi$ will send $V_2$ surjectively to $V_1$. Now that means, as $\pi$ sends $V_3$ to $V_2$ and $\pi^2$ send $V_3$ to $0$ (as it is of Hodge polygon $(3,3)$) thus $\pi$ sends $V$ to $V_1$, and thus the kernel of $\pi$ is of rank 4, a contradiction.

Denote $X$ the moduli space of all possible $((V_i),\pi)$ with $d_i = 2$ and $\pi(V_i) \subset V_{i-1}$. Denote $X_{(3,3)}$ the Hodge stratum corresponding to the Hodge polygon of partition $(3,3)$ and $X_{(4,2)}$ the analogous one. If $\overline{X_{(3,3)}} \supset X_{(4,2)} \ni x = ((V_i),\pi_1)$ take $C$ an irreducible component of $X_{(3,3)}$ passing through $x$, and look at the local ring of $C$ at $x$, $\mathcal O_{C,x}$. This induces a generisation of $x$ such that the Hodge polygon is above $(3,3)$, under $(4,2)$ and by the previous calculation can't be equal to $(3,3)$. Thus it is generically $(4,2)$ too. This is true for all components $C$, thus, locally at $x$, $X_{(4,2)}$ is an open component of $X_{(3,3)}$, and thus 
$x \not\in \overline{X_{(3,3)}}$.
\end{proof}

We still hope to construct a strong stratification on $X_\kappa$, by "cutting" in parts the Hodge stratums. Unfortunately, the situation gets very complicated when the ramification index $e$ grows. One has however the following result when $e=2$.
Recall that in polarised cases ((C),(AU) and (AR)) we only consider symmetric data $(\nu_\tau)_\tau$ for the Hodge strata.  
\begin{prop}
\label{prop38}
If $e \leq 2$ 
and no $\pi$ falls in case (AR), then the Hodge stratification is a strong stratification.
\end{prop}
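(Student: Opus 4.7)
By the semicontinuity of Proposition \ref{prop222} we already have $\overline{X_\nu} \subset \coprod_{\mu \geq \nu} X_\mu$, so only the reverse inclusion requires proof: every point $x \in X_\mu$ with $\mu \geq \nu$ admits a deformation $\Spec k[[t]] \to X_\kappa$ with closed point $x$ and generic point in $X_\nu$.

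The case $e = 1$ is vacuous since $[\pi]$ acts by zero on every $\omega_{G_\pi,\tau}$ in special fibre, so the Hodge polygon is constant on $X_\kappa$. Assume henceforth $e = 2$. Modulo $p$ one has $[\pi]^2 = 0$, so the action of $[\pi]$ on each $\omega_{G_\pi,\tau}$ has Jordan type $(2^{t_\tau},\, 1^{a_\tau - 2t_\tau})$, entirely determined by the single integer $t_\tau := \dim_k [\pi]\omega_{G_\pi,\tau}$. The Pappas--Rapoport chain $[\pi]\omega_\tau \subset \omega_\tau^{[1]} \subset \omega_\tau \cap \ker[\pi]$, with $\dim\omega_\tau^{[1]} = d_{\tau,1}$ and $\dim(\omega_\tau \cap \ker[\pi]) = a_\tau - t_\tau$, forces $t_\tau \in [0, \min(d_{\tau,1}, d_{\tau,2})]$, and the polygon $\Hdg_{\pi,\tau}$ is determined by $t_\tau$, strictly decreasing in the partial order on polygons as $t_\tau$ grows (with the symmetry $t_\tau = t_{\overline\tau}$ imposed in polarised cases). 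Thus Hodge strata are labelled by tuples $(t_\tau)_\tau$; by Serre--Tate we may treat each $\pi \in \mathcal{P}$ independently, and iterating reduces the problem to the case where one wishes to raise a single $t_\tau$ by one unit while leaving all other data fixed.

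For this we follow the deformation template of Section \ref{sect321}. Working inside $\mathcal{H}_\tau \otimes_k k[[t]]$, we first lift $\omega_\tau^{[1]}$ to an arbitrary submodule $\widetilde\omega_\tau^{[1]} \subset \mathcal{H}_\tau[T-\pi_1] \otimes k[[t]]$ (totally isotropic in case (C), by Lemma \ref{lift_C}), and then lift $\omega_\tau$ to $\widetilde\omega_\tau \subset (T-\pi_2)^{-1}\widetilde\omega_\tau^{[1]}$ by Lemma \ref{lift_L} or its polarised variant. The key input is that these lifting lemmas can be refined to prescribe the generic-fibre intersection dimension $\dim_{k((t))}(\widetilde\omega_\tau \cap \mathcal{H}_\tau[T-\pi_1])$ to be any value in $[a_\tau - \min(d_{\tau,1}, d_{\tau,2}),\, a_\tau - t_\tau]$. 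Choosing this value to be $a_\tau - (t_\tau+1)$ and algebraising via Grothendieck--Messing yields a map $\Spec k[[t]] \to X_\kappa$ whose generic fibre satisfies $\dim [\pi]\omega_\tau = t_\tau + 1$, as required.

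The only ingredient beyond Section \ref{sect321} is the refinement of Lemmas \ref{lift_L}--\ref{lift_C} to hit intermediate intersection dimensions rather than extremal ones, which follows by direct inspection of the explicit basis constructions in those proofs. The argument is specific to $e \leq 2$ because only one intersection $\widetilde\omega_\tau \cap \mathcal{H}_\tau[T-\pi_1]$ needs to be controlled; for $e \geq 3$ the entire chain $\widetilde\omega_\tau \cap \mathcal{H}_\tau[Q_j]$ for $j = 1, \dots, e-1$ would need to be controlled simultaneously, and as the counterexample preceding the proposition shows the Pappas--Rapoport conditions can obstruct this.
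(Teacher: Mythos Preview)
Your proof is correct and follows essentially the same approach as the paper: reduce to $e=2$, parametrise $\tau$-Hodge strata by a single integer (your $t_\tau = \dim[\pi]\omega_\tau$, equivalently the paper's $a_{\tau,2}$), and show that any point can be deformed over $k[[t]]$ so that this integer increases by exactly one, via an explicit lift of the Pappas--Rapoport filtration and Grothendieck--Messing. The paper writes down the lift in coordinates (choosing a basis $e_1,\dots,e_h$ of $\mathcal H_\tau$ and perturbing one vector, respectively taking the symmetric matrix $A$ of rank $1$ in case (C)); you instead invoke a ``rank-controlled'' refinement of Lemmas \ref{lift_L}--\ref{lift_C}, which indeed comes out of those same basis constructions by perturbing fewer vectors. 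The two arguments are the same in substance.
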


\begin{proof} If $e =1$, there is only one Hodge stratum and everything is trivial. 
If $e=2$, we can suppose $d_{\tau,1} \geq d_{\tau,2}$. Indeed in case $C$ there is an equality, and in case AL or AU considering the dual group (which coincide with $G^{(s)}$ in case (AU), thus we can in case (AU) consider only half of the embeddings $\tau$ as we did in the proof of theorem \ref{thr229}) we can reduce to this case.
The $\tau$-Hodge polygon is given by two integers $a_{\tau,1} \geq a_{\tau,2}$ such that if $x \in X_\kappa(k)$, then $\omega_{\tau,x}[\pi]$ is a $k$-vector space of dimension
 $a_{\tau,1}$ and $\omega_{\tau,x}$ is of dimension $a_{\tau,1}+a_{\tau,2}$. As $\omega_{\tau,x}^{[1]}$ is of dimension $d_{\tau,1}$ and of $\pi$-torsion by the 
 Pappas-Rapoport condition, we have that $d_{\tau,1} \leq a_{\tau,1}$. Thus, all the possible $\tau$-Hodge polygons are classified by couples $(a_{\tau,1},a_{\tau,2})$ with 
 $a_{\tau,1} \geq d_{\tau,1}$ and $a_{\tau,1} + a_{\tau,2} = d_{\tau,1} + d_{\tau,2}$. Moreover if $(a_{\tau,1},a_{\tau,2})$ and $(b_{\tau,1},b_{\tau,2})$ are two $\tau$-Hodge 
 polygons, the former is above the latter if and only if $a_{\tau,1} \geq b_{\tau,1}$. The Generalised Rapoport locus corresponds to 
 $(a_{\tau,1},a_{\tau,2}) = (d_{\tau,1},d_{\tau,2})$ Thus we will prove that given any point $x \in X(k)$ with $\tau$-Hodge polygon $(a_{\tau,1},a_{\tau,2})$ 
 not in the generalised Rapoport locus, there is a deformation to $k[[t]]$ with $\tau$-Hodge polygon $(a_{\tau,1}-1,a_{\tau,2}+1)$. Fix such a point, and fix a 
 $k[\pi]/\pi^2$ basis of $H^1_{dR,\tau}$, $e_1,\dots,e_h$, such that $\pi e_1,\dots, \pi e_{d_1}$ is a basis of $\omega_\tau^{[1]}$ over $k$ and 
 $e_1,\dots,e_r,\pi e_{d_1+1},\dots,\pi e_{d_1+s}$ (necessarily with $r \leq d_1$) induces a basis of $\omega_{\tau}/\omega_\tau^{[1]}$. 
Then $r +s = d_2$ and $a_{\tau,1} = d_1 + s$. As this point is not in the generalised Rapoport locus, we have $s > 0$, thus $r < d_1$. Then set in $H^1_{dR,\tau} \otimes_{k[\pi]/\pi^2} (k[\pi]/\pi^2)[[t]]$, 
\[ \widetilde{\omega_\tau}^{[1]} = k[[t]](\pi e_1,\dots,\pi e_{d_1}) \quad \text{and} \quad \widetilde{\omega_\tau} = \widetilde{\omega_\tau}^{[1]} + k[[t]](e_1,\dots,e_r,e_{d_1+s} + te_{r+1},\pi e_{d_1+1},\dots,\pi e_{d_1+s-1}).\] In case C, we need to choose a lift of $\Vect(e_1,\dots,e_{r},\pi e_{d_1+1},\dots,\pi e_{d_1+s})$ in $\pi^{-1}\widetilde{\omega_\tau}^{[1]}/\widetilde{\omega_\tau}^{[1]}$ (which has a perfect pairing), whose intersection with $H^1_{dR,\tau}[\pi]/\widetilde{\omega_\tau}^{[1]}$ is of dimension $s-1$. But we can quotient further by $\Vect(e_1,\dots,e_r)$ (which is totally isotropic), and we are reduced to the case $r =0$. In this case this is as in the proof of the first part of Lemma \ref{lift_C}, taking the matrix $A$ to be symmetric of rank 1.

Then, as in the proof of subsection \ref{sect321} there is a lift of $x$ whose Hodge filtration is given by $\widetilde{\omega_\tau}$. Moreover,
\[ \dim_{k((t))^{perf}} (\widetilde{\omega_\tau}\otimes k((t))^{perf})[\pi] = d_1+s-1.\]
\end{proof}

\begin{rema}
The calculation of Appendix \ref{AppE} still shows that even when $e =2$, the analogous result in case AR is false.
\end{rema}

\section{The $\mu$-ordinary locus}

\subsection{Density of the $\mu$-ordinary locus}

The goal of this section is to show the following theorem,
\begin{theor} 
\label{thr41}
Let $\pi$ be a prime as in section \ref{sectdecpol}, and suppose that $\pi$ doesn't fall in case (AR).
Then the $\mu$-ordinary locus (for $\pi$) $X^{\nu = PR}$ inside $X$ is dense.
\end{theor}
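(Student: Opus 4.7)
The plan is to combine the density of the generalized Rapoport locus (Theorem~\ref{thrRap}) with the display deformation technique of Zink--Wedhorn (Proposition~\ref{Wed328}), in a way parallel to Wedhorn's argument in the unramified case. Since the Newton polygon is upper semi-continuous (Proposition~\ref{prop222}) and bounded below by $\PR$ (Propositions~\ref{prop218}--\ref{prop221}), the $\mu$-ordinary locus $X^{\nu = PR}$ is open, so it suffices to prove that every closed point $x \in X_\kappa$ admits a generization whose generic fiber is $\mu$-ordinary. Applying Theorem~\ref{thrRap} we may first generize $x$ into the generalized Rapoport locus, thereby assuming $\omega_{G_\pi, x}$ is locally free as an $\mathcal O_{X_\kappa, x} \otimes \mathcal O_{F_\pi}$-module, which puts the local structure of $G_\pi$ in a completely standard form.

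At such a point, the display $P_0$ of $G_\pi$ admits an explicit presentation: the Hodge filtration is the one encoded by the Pappas--Rapoport datum, and the Frobenius on the Dieudonn\'e module is given by a block matrix compatible with the $\mathcal O_{F_\pi}$-action and with the polarization in cases (C) and (AU). The task is then to construct a $W(k)$-linear nilpotent endomorphism $N$ of $P_0$ satisfying $N^2 = 0$, $\mathcal O_{F_\pi}$-linear, and (in the polarized cases) skew-symmetric with respect to $\lambda_0$, so that the deformation $X_N$ produced by Proposition~\ref{Wed328} has $\mu$-ordinary generic fiber over $k((t))^{\mathrm{perf}}$. One builds $N$ as an explicit sum of elementary rank-one nilpotents that merge consecutive Hodge blocks in $P_0$, bringing the Frobenius slope sequence down to the one prescribed by $\PR$; in cases (C) and (AU) each such elementary block is paired with its counterpart at $\overline\tau$ via $\lambda_0$ to preserve the required skew-symmetry. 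The Pappas--Rapoport filtration itself can be lifted arbitrarily along this deformation, by the remark following Proposition~\ref{Wed328}.

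The main obstacle is the verification that the constructed $N$ actually realizes the $\mu$-ordinary Newton polygon on the generic fiber, rather than merely some polygon strictly smaller than $\Newt(G_\pi)$. This requires an explicit case-by-case computation in (AL), (AU), and (C), adapting Wedhorn's unramified calculations to the ramified setting, the additional work stemming from the need to commute with the uniformizer $\pi$ and to stay within the Pappas--Rapoport constraints; both simplifications are granted precisely by having reduced to the generalized Rapoport locus in the first step. Once $X_N$ is shown to be $\mu$-ordinary generically, Proposition~\ref{Wed328} produces a $k[[t]]$-deformation of the $\pi$-component which, via Serre--Tate and the trivial extension of the prime-to-$p$ level structure, assembles into a formal deformation of $x$ in $X$. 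As $X$ is a scheme of finite presentation, this algebraizes to a morphism $\Spec k[[t]] \to X_\kappa$ specializing to $x$ and whose generic point lies in $X^{\nu = PR}$, proving the density.
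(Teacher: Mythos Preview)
Your overall strategy is correct and matches the paper's: reduce to the generalized Rapoport locus via Theorem~\ref{thrRap}, then use display deformations (Proposition~\ref{Wed328}) together with Serre--Tate to produce generizations. However, the heart of your sketch contains a genuine gap.

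You describe building a single nilpotent $N$ (with $N^2=0$, $\mathcal O$-linear, skew-symmetric) so that the resulting deformation $X_N$ is $\mu$-ordinary over $k((t))^{\mathrm{perf}}$ in one step. This is not what the paper does, and it is not what Wedhorn does in the unramified case either. The constraint $N^2=0$ is severe: a single such $N$ typically only lowers the Newton polygon a little, and there is no reason to expect one $N$ to collapse the entire polygon down to $\PR$. Instead, the argument is \emph{iterative}. At each stage one constructs an $N$ that forces the bi-infinitesimal part of the deformation over $k((t))^{\mathrm{perf}}$ to acquire a nontrivial \'etale (or multiplicative) piece; one then splits this off and repeats on the smaller bi-infinitesimal remainder. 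The density statement is obtained only after a finite chain of such generizations, by induction on the height (or on the number of Newton slopes) of the bi-infinitesimal part. Your write-up suppresses this induction entirely, and the phrase ``bringing the Frobenius slope sequence down to the one prescribed by $\PR$'' with a single $N$ is not justified.

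There are two further technical points you have skipped that the paper handles explicitly. First, to make the induction work one normalizes so that the first Hodge slope is zero at each embedding; in case (AU) this forces the introduction of \emph{non-necessarily parallel} polarized $\mathcal O$-crystals (Definition~\ref{defseq} and Proposition~\ref{propNNP}), together with a compatibility check between this modification and the display deformation. Second, in the polarized cases (AU) and (C) the construction of $N$ is organized around \emph{deformation sequences} $(x_\tau)_\tau$ in $M/\pi M$, which guarantee that the contributions at $\tau$ and $\overline\tau$ do not cancel when one computes $F_N^{2d}$; your ``pair each elementary block with its counterpart at $\overline\tau$'' does not address this cancellation issue. These are precisely the places where the ramified case requires new work beyond a straightforward transcription of \cite{Wed1}.
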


To prove it, we will once again follow the strategy of deformations of the $p$-divisible group (by Serre-Tate's theorem), do it one prime at a time and thus in each of the cases (AL),(AU),(C). Moreover, by \cref{thrRap}, we only need to deform $p$-divisible group that are already in the generalised Rapoport locus. Our main tool is \cref{Wed328}. From now on we will only consider lifts of a crystal in the sense of section 3.2.3 of \cite{Wed1}. We thus call a \textit{deformation} of a crystal over $k$ a display over $k[[t]]$ of the form $P_\alpha$ for some $\alpha \in \Hom_{W(k[[t]])}(P, W(tk[[t]])P)$ in the notations of \cite{Wed1}.

\begin{rema}
In case (AR) our argument breaks down. It is likely that in general the $\mu$-ordinary locus is not dense in every irreducible component, similarly to the Hilbert case at Iwahori level, described in \cite{Stamm}. This is confirmed by the case of $U(1,n-1)$, calculated by \cite{Kramer}, whose calculation is made in Appendix \ref{AppE}.
\end{rema}

Moreover, for cases (AU) and (C), we will have to use a slightly more adapted polarisations. Indeed, the object considered have a natural polarisation, compatible in a certain sense to the additional action of a ring $\mathcal O$, but not $\mathcal O$-hermitian (or bilinear). Denote $L/L^+$ the extension of (local) fields at the prime considered (thus $L = L^+$ in case C), and denote $s \in \Gal(L/L^+)$ the non trivial automorphism (if it exits, $s=\id$ otherwise). Denote $\Diff^{-1} = \Diff^{-1}_L$ the inverse different of $L$, and $\mathcal O = \mathcal O_L$.

We have the following,

\begin{prop}
\label{proppolLan}
Let $k$ be a perfect field of characteristic $p$, and $G$ be a $p$-divisible $\mathcal D$-module. Its Dieudonne module $M$, a free $W_{}(k)\otimes_{\ZZ_p}\mathcal O$-module, together with two applications
\[ V : M \fleche M, \quad \text{and} \quad F : M \fleche M,\]
which are $\sigma^{-1}$-(resp $\sigma$-)linear, satisfying $FV = VF = p\id$, is endowed with a $s$-antihermitian $W_{\mathcal O}(k)$-pairing,
\[ h : M \times M \fleche W(k)\otimes \Diff^{-1},\]
satisfying
\[ h(x,Fy) = h(Vx,y)^{\sigma},\quad \forall x,y \in M.\]
Moreover, if $<.,.>$ denote the original alternating pairing on $M$, we have $\tr_{F/\QQ_p} h = < . ,.>$. Such a $h$ is unique.
In particular in case (C), $h$ is alternating.
\end{prop}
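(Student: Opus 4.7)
The plan is to construct $h$ from the alternating pairing $\langle\cdot,\cdot\rangle$ by dualizing with respect to the trace. The key input is that the trace pairing
$$\tr_{L/\QQ_p} : \mathcal{O}_L \otimes_{\ZZ_p} \Diff_L^{-1} \longrightarrow \ZZ_p$$
is a perfect duality of $\ZZ_p$-modules. After base change to $W(k)$, it gives a perfect $W(k)$-bilinear duality between $W(k)\otimes_{\ZZ_p}\mathcal{O}_L$ and $W(k)\otimes_{\ZZ_p}\Diff_L^{-1}$. For fixed $x,y \in M$, the map $a\mapsto \langle x,ay\rangle$ is $\ZZ_p$-linear from $\mathcal{O}_L$ to $W(k)$, so by trace duality there is a \emph{unique} $h(x,y) \in W(k)\otimes \Diff_L^{-1}$ with
$$\tr_{L/\QQ_p}\bigl(h(x,y)\cdot a\bigr)=\langle x, a y\rangle,\quad \forall a \in \mathcal{O}_L.$$
Taking $a=1$ yields $\tr h = \langle\cdot,\cdot\rangle$, and both the existence and the claimed uniqueness of $h$ are immediate from this construction.

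Next I would verify sesquilinearity and antisymmetry. For $a\in\mathcal{O}_L$, the identity $\tr(h(x,ay)b)=\langle x, bay\rangle = \tr(h(x,y)ba) = \tr((ah(x,y))b)$ for all $b$ forces $h(x,ay)=a\,h(x,y)$, so $h$ is linear in the second variable. Using the given compatibility $\langle ax,y\rangle = \langle x,s(a)y\rangle$, a parallel computation gives $h(ax,y)=s(a)\,h(x,y)$, which is the desired $s$-sesquilinearity. For antisymmetry, combine the alternating property with the compatibility and the Galois-invariance of $\tr$:
$$\tr\bigl(h(y,x)\,a\bigr)=\langle y,ax\rangle=-\langle ax,y\rangle=-\langle x,s(a)y\rangle=-\tr\bigl(s(h(x,y))\,a\bigr),$$
which forces $h(y,x)=-s\bigl(h(x,y)\bigr)$. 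In case (C) we have $s=\id$, so this is exactly the alternating condition.

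Finally, the Frobenius--Verschiebung compatibility $h(x,Fy)=h(Vx,y)^\sigma$ is obtained by transporting the analogous identity $\langle x, Fy\rangle = \langle Vx,y\rangle^\sigma$ for the original alternating pairing (which is the standard relation coming from Cartier duality on Dieudonné modules of polarized $p$-divisible groups). Indeed, since the trace intertwines the Witt-vector Frobenius $\sigma$ on $W(k)\otimes\mathcal{O}$ and on $W(k)\otimes\Diff^{-1}$, one applies the defining equality
$$\tr\bigl(h(x,Fy)\,a\bigr)=\langle x,a\,Fy\rangle = \langle x, F(F^{-1}(a)\,y)\rangle$$
and uses $F a = \sigma(a) F$ on $M$ plus the $F$--$V$ compatibility of $\langle\cdot,\cdot\rangle$ to rewrite the right side as $\tr(h(Vx,y)^\sigma\,a)$; unwinding with the uniqueness of $h$ gives the claim.

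The only real point of care is the last step: the bookkeeping with $\sigma$ and with how $F,V$ interact with the $\mathcal{O}$-action on $M$ must be done precisely, since $\sigma$ acts on $W(k)$ but not on $\mathcal{O}$. Everything else is a formal consequence of the perfectness of the trace pairing, so no deep input is required beyond the existence of the original alternating $W(k)$-valued pairing arising from the polarisation.
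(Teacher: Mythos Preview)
Your argument is correct and is essentially the paper's: the paper cites Lan (Lemma 1.1.4.5) for the existence and uniqueness of the $s$-antihermitian $h$ with $\tr h = \langle\cdot,\cdot\rangle$ --- which is exactly the trace-duality construction you spell out --- and then verifies the $F$--$V$ compatibility by the same trace computation (the paper places the scalar in the first slot of the pairing rather than the second, but the manipulation is identical). One cosmetic point: in your last step write $\sigma^{-1}(a)$ rather than $F^{-1}(a)$, since $a$ is a scalar in $\mathcal O$ and $F$ is an operator on $M$.
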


\begin{proof}
The existence of $h$ $s$-antihermitian satisfying $tr h = <, >$ is \cite{Lan} Lemma 1.1.4.5. Thus, it suffices to prove the compatibility with $F$ and $V$.
But $ \forall x,y \in M, o \in \mathcal O$,
\begin{eqnarray*} \tr o h(x,Fy) = \tr h(ox,Fy) =  <ox,Fy> = <Vox,y>^\sigma = <\sigma^{-1}(o)Vx,y>^\sigma \\= \tr(h(\sigma^{-1}(o)Vx,y)^\sigma) = \tr(o h(Vx,y)^\sigma).\end{eqnarray*}
Thus, by \cite{Lan} Corollary 1.1.4.2, we have $h(x,Fy) = h(Vx,y)^{\sigma}$, for all $x,y \in M$.
\end{proof}

Unfortunately, it will not be possible to always suppose $h$ alternating, even if $< , > = \tr h$ is, as shown by the following example :

\begin{example}
Let $\CC  = \RR^2$ endowed with the $\RR$-linear alternating pairing $\CC \times \CC \fleche \RR$ given in basis $(1,i)$ of $\CC$ by 
\[
\left(
\begin{array}{cc}
  0 & -1     \\
  1 &      0
\end{array}
\right),
\]
whose $h$ is given by $\frac{i}{2} z_1 \overline{ z_2}$.
\end{example}

But in case (C), as $s \in \Gal(L/L^+)$ satisfies $s = \id$, we have that $h$ is anti-symmetric, thus alternating (as $\Char(W_{\mathcal O}(k)) \neq 2$). Remark that the same proposition is true for displays.


\subsection{The case $(AL)$} 
\label{thrAL} 

Let $x_G$  a point in the Rapoport locus, with values in a perfect field $k$, corresponding to a group $G$ and fix $\tau_0$. Note that any generisation of $x$ still lives in the Rapoport locus. 
Let $M$ the Dieudonné crystal of $G$ over $k$.
\begin{lemm}
\label{lemslope0}
We can suppose that for all $\tau$, the first slope of the Hodge-Polygon is zero. In particular, for all $\tau$, there exists $x_\tau \in M_\tau$ such that
\[ F_\tau(x_\tau) \not\equiv 0 \pmod{\pi}.\]
\end{lemm}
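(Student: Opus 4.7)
The lemma has two parts: a WLOG reduction (``we can suppose the first slope is zero at every $\tau$'') and its Dieudonn\'e-theoretic consequence (the existence of an element $x_\tau \in M_\tau$ with $F_\tau(x_\tau) \not\equiv 0 \pmod{\pi}$). I would dispatch the second part first, as it essentially unwinds standard Dieudonn\'e theory in the ramified setting. Over the perfect field $k$, the $\tau$-component $M_\tau$ is free of rank $h'$ over $W(k)\otimes_{\ZZ_p,\tau} \mathcal O_L$, and $F_\tau \colon M_\tau \to M_{\sigma(\tau)}$ is $\sigma$-semilinear satisfying $F_\tau V_{\sigma(\tau)} = p = [\pi]^e$. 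The first slope of the $\tau$-Hodge polygon is read off as the minimal $[\pi]$-divisibility exponent of $F_\tau$; in particular it equals zero iff $F_\tau(M_\tau) \not\subseteq \pi M_{\sigma(\tau)}$, which produces the required $x_\tau$. If instead every element of $M_\tau$ had $F$-image in $\pi M_{\sigma(\tau)}$, the map $[\pi]^{-1} F_\tau$ would be defined and would shift the Hodge polygon strictly upward, contradicting first slope zero.

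For the reduction itself, I would argue by induction on the $\mathcal O_L$-height $h'$ of $G$. If at some $\tau_0$ the first Hodge slope is strictly positive, then $F_{\tau_0}(M_{\tau_0}) \subseteq \pi M_{\sigma(\tau_0)}$. In case (AL) there is no polarization coupling $G = H_\pi$ to its dual, and we may extract a sub-$p$-divisible $\mathcal O_L$-module $G' \subseteq G$ whose Dieudonn\'e module is a direct summand of $M$ absorbing this $[\pi]$-divisibility (the ``locally multiplicative'' part at $\tau_0$). The quotient $G / G'$ has strictly smaller $\mathcal O_L$-height and inherits an adapted PR datum $\mathcal C'$ obtained by trimming the entries $d_{\tau_0, j}$; density of the $\mu$-ordinary locus for the corresponding smaller moduli problem implies density for the original, since deformations of $G/G'$ lift uniquely to deformations of $G$ via the extension by $G'$ (Serre-Tate theory, using that $G'$ is rigid up to isomorphism once fixed in the fibered family). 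Iterating across all $\tau$ for which the first slope is still positive yields the desired normal form.

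\emph{Main obstacle.} The delicate step is the WLOG reduction: namely, ensuring that the extraction of $G'$ is compatible with the Pappas-Rapoport filtration and that the density transfer through the quotient is legitimate. This is combinatorial rather than conceptual, but needs care in the ramified local structure at $\pi$; the absence of a polarization constraint in case (AL) is what makes the split-off clean. The Dieudonn\'e-theoretic consequence itself is straightforward once the reduction is in place.
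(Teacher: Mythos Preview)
Your treatment of the second part --- that first Hodge slope zero at $\tau$ is equivalent to $F_\tau(M_\tau) \not\subseteq \pi M_{\sigma\tau}$, hence to the existence of the required $x_\tau$ --- is correct and matches the paper.

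The reduction, however, has a genuine gap. Your proposal to ``extract a sub-$p$-divisible $\mathcal O_L$-module $G' \subseteq G$ whose Dieudonn\'e module is a direct summand absorbing the $[\pi]$-divisibility at $\tau_0$'' does not make sense as stated. The decomposition $M = \bigoplus_\tau M_\tau$ is \emph{not} a decomposition into sub-Dieudonn\'e modules: Frobenius permutes the summands cyclically, so a sub-$p$-divisible group corresponds to an $F,V$-stable submodule meeting every $M_\tau$ nontrivially. The mere divisibility of $F_{\tau_0}$ by $\pi$ at a single embedding produces no such submodule, let alone a direct summand. There is no ``locally multiplicative part at $\tau_0$'' of a $p$-divisible group. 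Your subsequent claims --- that the quotient has smaller height, inherits a trimmed PR datum $\mathcal C'$, and that density transfers back through the extension via Serre--Tate --- therefore have no foundation. (Even granting some splitting, the ``density transfer'' would compare moduli problems with \emph{different} data $(d_{\tau,j})$, which is not what the lemma is about; and extensions of $p$-divisible groups are not rigid, so deformations of $G/G'$ do not lift uniquely to deformations of $G$.)

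The paper's argument is both simpler and of a different nature: it is a \emph{twist}, not a filtration. If $a_\tau$ denotes the first Hodge slope at $\tau$, then $F_\tau$ is divisible by $\pi^{a_\tau}$, and one sets
\[
F^0_\tau := \pi^{-a_\tau} F_\tau, \qquad V^0_\tau := \pi^{a_\tau} V_\tau,
\]
on the \emph{same} underlying module $M$. This defines a new $p$-divisible $\mathcal O_L$-module $G'$ whose first Hodge slope is zero at every $\tau$. The assignment $G \mapsto G'$ is a bijection on isomorphism classes (shifting each $\tau$-Hodge polygon down by $a_\tau$), and --- this is the crucial point --- it is compatible with the display-deformations over $k[[t]]$ and their specialisations to $k((t))^{\mathrm{perf}}$ used in Proposition~\ref{Wed328}. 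So any deformation of $G'$ to the $\mu$-ordinary locus untwists to one for $G$. No subquotient, no change of moduli problem, and no induction on height is required.
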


\begin{proof}
Indeed, otherwise denote by $a_\tau$ the first slope for all $\tau$, this means that $F_\tau$ is divisible by $\pi^{a_\tau}$ on the Dieudonné module of $G$.
Denote $F^0_\tau = \frac{1}{\pi^{a_\tau}}F_\tau$ and $V^0_\tau = \pi^{a_\tau}V_\tau$. Denote by $G'$ the $p$-divisible group associated to $(M,F^0,V^0)$. Then it is easily checked that the association $G \mapsto G'$ is bijective on $p$-divisible group with $\mathcal O$-action with fixed $\tau$-Hodge polygons on the source to fixed $\tau$-Hodge polygons  where each $\tau$-slope is decreased by $a_\tau$. Moreover this is compatible with display-deformations to $k[[t]]$ and specialisation to $k((t))^{perf}$ in \cref{Wed328}.
Thus we only need to deform $G'$, whose first Hodge-slope is zero for all $\tau$.
\end{proof}

Thus, the first slope of $\Hdg_\tau(G)$ is zero for all $\tau$. If the first slope for $Newt(x)$ is also zero, then there is a splitting
\[ G = G^0 \times G',\]
where the Hodge polygon and Newton polygon of $G'$ does not have a slope zero (see \cite{BH}, Théorème 1.3.2). If we can find a deformation of $G'$ which is $\mu$-ordinary, then we are finished.
Thus up to exchange $G$ by $G'$, we can suppose that the first slope of $Newt(G)$ is non zero and proceed by induction of the height of $G$.

Let $x \in M_{\tau_0}$ such that $F(x) \not\equiv 0 \pmod{\pi}$ (this is possible by the previous lemma). Let $i$ be the minimal integer such that 
\[ F^{i}(x) \equiv 0 \pmod \pi.\]
As the Newton polygon of $G$ doesn't have a zero slope, there exists such a $i$. By what preceed, we know that $i \geq 2$.

\begin{lemm}
There exists a generisation of $x_G$ such that $i > f$. 
\end{lemm}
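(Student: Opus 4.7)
The plan is to produce such a generisation by iterated application of \cref{Wed328}: starting from a point with chain length $i \leq f$, I will construct a display-deformation over $k[[t]]$ whose restriction to $k((t))^{perf}$ strictly increases $i$, and iterate at most $f$ times. The crucial simplification is that in case (AL) the only conditions imposed on the endomorphism $N$ in \cref{Wed328} are $\mathcal{O}$-linearity and $N^{2} = 0$; no polarisation compatibility needs to be verified, which leaves a great deal of freedom for the construction of $N$.

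For one inductive step, set $y := F^{i-1}(x) \in M_{\sigma^{i-1}\tau_{0}}$, and write $F(y) = \pi z$ with $z \in M_{\sigma^{i}\tau_{0}}$. Since $i \leq f$, the embeddings $\tau_{0}, \sigma\tau_{0}, \dots, \sigma^{i}\tau_{0}$ are pairwise distinct; in particular the component $M_{\sigma^{i-1}\tau_{0}}$ has not yet been visited at an earlier step of the chain. By \cref{lemslope0} the first Hodge slope at each embedding is zero, so we may pick $w \in M_{\sigma^{i-1}\tau_{0}}$ with $F(w) \not\equiv 0 \pmod{\pi}$. A dimension count in $M_{\sigma^{i}\tau_{0}}/\pi$, combined with the assumption that we already lie in the generalised Rapoport locus (\cref{thrRap}), allows $w$ to be chosen so that $F(w)$ and $z$ are linearly independent modulo $\pi$. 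One then defines $N$ to be the $W(k) \otimes \mathcal{O}$-linear endomorphism of $P_{0}$ supported on the $\sigma^{i-1}\tau_{0}$-component, sending a distinguished $\mathcal{O}$-generator (adapted to the Pappas--Rapoport filtration) to $w$ and vanishing on a chosen direct complement; the rank one image automatically forces $N^{2} = 0$.

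The resulting deformation produced by \cref{Wed328} has perturbed Frobenius $\widetilde F$ agreeing with $F$ modulo $t$, with first-order correction controlled by $N$. Unwinding the construction of the display $P_{\alpha}$ from \cite{Wed1} for this specific $N$, one obtains an expansion
\[\widetilde F^{i}(x) \equiv \pi z + t\, F(w) \pmod{t^{2} M + \pi t M},\]
which is not divisible by $\pi$ in the generic fibre $k((t))^{perf}$ because $F(w)$ and $z$ are independent modulo $\pi$. This strictly increases the chain length, and iteration up to $f$ times produces a generisation with $i > f$.

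The main technical obstacle is the explicit computation of the first-order correction to $\widetilde F^{i}(x)$ in terms of $N$: this requires a careful unwinding of the display-theoretic formalism of \cite{Wed1}, and it is what determines that one can isolate the contribution $t\, F(w)$ responsible for extending the chain. A secondary but less serious point is the combinatorial verification that $w$ can simultaneously respect the Pappas--Rapoport filtration and satisfy the independence requirement in $M_{\sigma^{i}\tau_{0}}/\pi$; this follows from the basis-adaptedness provided by the generalised Rapoport locus.
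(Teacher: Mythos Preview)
Your overall strategy—iterated display deformations via \cref{Wed328}, one step at a time, exploiting that in case (AL) no polarisation condition constrains $N$—is exactly the paper's. But your specification of $N$ has a genuine gap, and you introduce conditions that are not needed.

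The paper defines $N$ on the $\sigma^{i-1}\tau_{0}$-component by $N(y)=w$ and $N(w)=0$ (in your notation), extended by zero on a complement and to the other $\tau$-components. The point is that the deformed Frobenius satisfies $\widetilde F^{i-1}(x)=y+tN(y)$, so to get the first-order term $tF(w)$ in $\widetilde F^{i}(x)$ you must have $N(y)=w$. Your $N$ instead sends ``a distinguished $\mathcal O$-generator (adapted to the Pappas--Rapoport filtration)'' to $w$; nothing connects this generator to $y$, so there is no reason $N(y)$ is nonzero, and your displayed formula for $\widetilde F^{i}(x)$ does not follow. The Pappas--Rapoport filtration plays no role in constructing $N$.

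Two further simplifications you are missing. First, the linear independence you need is of $y$ and $w$ in $M_{\sigma^{i-1}\tau_0}/\pi$ (so that $N$ is well-defined with $N^{2}=0$), and this is immediate: $F(y)\equiv 0$ while $F(w)\not\equiv 0\pmod\pi$. You instead ask for $F(w)$ and $z$ to be independent in $M_{\sigma^{i}\tau_0}/\pi$, which is irrelevant—modulo $\pi$ your own formula already gives $\widetilde F^{i}(x)\equiv tF(w)$, nonzero as soon as $F(w)\not\equiv 0$. Second, the ``main technical obstacle'' you flag (unwinding the display formula) is in fact a two-line computation once $N$ is correctly specified; the generalised Rapoport locus and dimension counts are not used here at all.
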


\begin{proof}
Suppose $i \leq f$. Thus $F^i(x) \equiv 0 \pmod \pi$. By the previous lemma; there is $x_{i-1} \in M_{\sigma^{i-1}\tau}$ such that $F(x_{i-1}) \not\equiv 0 \pmod\pi$. 
As $y = F^{i-1}(x) \not\equiv 0 \pmod \pi$ but $F(y) \equiv 0 \pmod\pi$, $(x_{i-1},y)$ is a linearly independant family in $M_{\sigma^{i-1}\tau}/\pi M_{\sigma^{i-1}\tau}$. Define an homomorphism of $M/\pi M$ by
\[ N_{\tau'} = 0, \quad \forall \tau' \neq \sigma^{i-1}\tau \quad \text{and} \quad N_{\sigma^{i-1}\tau}y = x_{i-1},  \quad N_{\sigma^{i-1}\tau}x_{i-1} = 0,\]
and extend by zero $N$ on a complementary basis of $M_{\sigma^{i-1}\tau}$. Denote by $N$ any nilpotent lift of $N$ to $M$.
Define $D_N$ the extension of $M$ to $W(k[[u]])$ given by $N$ as in \cite{Wed1} (see also \cite{Zink}).
We can calculate
\begin{eqnarray*} F_N^i(x) = F_N(F_N(F_N^{i-2}(x))) = F_N(F_N(F^{i-2}(x))) = F_N(y\otimes 1 + x_{i-1} \otimes u) \equiv F_N(x_{i-1}\otimes u) \\
\equiv F(x_{i-1})\otimes u \not\equiv 0 \pmod \pi.\end{eqnarray*}
Thus over $W(k((u))^{perf})$ the display $D_N$ correspond to a $p$-divisible group such that $i' > i$ by Proposition \ref{Wed328}. By induction we get the result.
\end{proof}

\begin{lemm}
The exists a generisation of $x_G$ such that the generic fiber is not infinitesimal.
\end{lemm}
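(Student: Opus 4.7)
The plan is to push the induction of the previous lemma all the way to $i = \infty$. Over a perfect field, a $p$-divisible group is infinitesimal if and only if its Frobenius $F$ acts topologically nilpotently on the Dieudonné module modulo $\pi$; equivalently, the generic fiber fails to be infinitesimal exactly when there exists a vector $z$ in its Dieudonné module with $F^n(z)\not\equiv 0\pmod{\pi}$ for every $n\geq 1$. So the goal reduces to producing, after a suitable generisation, such a $z$.

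The key observation is that the construction used in the preceding lemma to enforce $i > f$ works verbatim for \emph{any} finite value of $i$. Indeed, given a $p$-divisible group with $F^i(x)\equiv 0\pmod{\pi}$ and $F^{i-1}(x) = y\not\equiv 0\pmod{\pi}$, one picks $x_{i-1}\in M_{\sigma^{i-1}\tau_0}$ with $F(x_{i-1})\not\equiv 0\pmod{\pi}$ (available for every embedding by Lemma \ref{lemslope0}), observes that the reductions $\bar y$ and $\bar x_{i-1}$ are automatically linearly independent modulo $\pi$ since $F$ annihilates the first but not the second, and builds an $\mathcal O$-linear nilpotent endomorphism $N$ of $P_0$ with $N(y) = x_{i-1}$ and $N = 0$ on a chosen complement. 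This $N$ satisfies the hypotheses of Proposition~\ref{Wed328} (the polarisation condition being vacuous in case (AL)), and the associated display deformation satisfies $F_N^i(x)\not\equiv 0\pmod{\pi}$ on its generic fiber by the same computation as in the preceding lemma. Iterating this step and using the transitivity of generisation in the scheme $X_\kappa$, one obtains a single generisation of $x_G$ whose $p$-divisible group satisfies $i > hf$.

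At this stage, Fitting's lemma applied to the $\sigma^f$-semilinear operator $F^f$ on the $k$-vector space $M_{\tau_0}/\pi M_{\tau_0}$ of dimension $h$ yields a decomposition $M_{\tau_0}/\pi M_{\tau_0} = K\oplus I$ with $F^f|_K$ nilpotent of degree at most $\dim K \leq h$ and $F^f|_I$ bijective. In particular $F^{hf}$ kills $K$, so the condition $F^{hf}(x)\not\equiv 0\pmod{\pi}$ forces the $I$-component $x_I$ of $x$ to be non-zero. Since $F^f$ is bijective on $I$, the operator $F$ cannot annihilate any non-zero element of $I$ (otherwise so would $F^f$); iterating, $F^n(x_I)\not\equiv 0\pmod{\pi}$ for every $n\geq 0$, and since $F^n(x_K) = 0$ as soon as $n \geq hf$, we conclude $F^n(x)\not\equiv 0\pmod{\pi}$ for every $n \geq 1$. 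The generic fiber thus carries a vector on which Frobenius is not topologically nilpotent modulo $\pi$ and is not infinitesimal.

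The main obstacle is precisely the passage from ``arbitrarily large but finite $i$'' to ``$i = \infty$'', which is handled by the Fitting dichotomy above: a semilinear operator on a finite-dimensional $k$-vector space which is non-nilpotent on a vector $x$ up to order $h$ must admit a bijective direct summand containing a non-trivial component of $x$. A minor bookkeeping point is that the iterated deformations really compose into a single generisation of $x_G$, by transitivity of scheme-theoretic closure in $X_\kappa$.
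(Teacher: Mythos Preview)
Your overall strategy is correct and reaches the right conclusion, but the assertion that the construction of the preceding lemma ``works verbatim for any finite value of $i$'', with the conclusion following ``by the same computation'', is not quite justified once $i > f$. The computation in that lemma uses $F_N^{i-2}(x) = F^{i-2}(x)$, which holds because $N$ vanishes at the embeddings $\sigma^j\tau_0$ for $0 < j < i-1$; this is automatic when $i \leq f$ since those embeddings are then pairwise distinct from $\sigma^{i-1}\tau_0$. For $i > f$ the embeddings cycle, so the single nonzero piece $N_{\sigma^{i-1}\tau_0}$ coincides with $N_{\sigma^{i-1-f}\tau_0}, N_{\sigma^{i-1-2f}\tau_0}, \ldots$, and the earlier iterates $F_N^j(x)$ pick up extra $t$-terms. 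The conclusion $F_N^i(x) \not\equiv 0 \pmod{\pi}$ is nevertheless correct: expanding $F_N^i = ((1+[t]N)F)^i$ and tracking $t$-exponents, the term with a single insertion of $N$ at step $i-1$ contributes $[t^p]\, F(x_{i-1}) \not\equiv 0$, while every other nonvanishing term carries $t$-exponent either $p^{1+jf}$ with $j\geq 1$ (single insertion at an earlier cycle) or a sum of at least two distinct positive powers of $p$ (multiple insertions), all strictly larger than $p$. Hence the $t^p$-coefficient of $F_N^i(x)\pmod{\pi}$ in $(M/\pi M)\otimes_k k[[t]]$ is nonzero and the element survives to $k((t))^{\mathrm{perf}}$. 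With this patch your Fitting argument goes through.

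The paper takes a genuinely different route once $i > f$. Rather than iterating the one-step construction through varying embeddings, it works entirely at $\tau_0$ via the $\sigma^f$-linear operator $F^f$: setting $r_0$ minimal with $F^{r_0 f}(x) \equiv 0 \pmod{\pi}$, it first shows that $x, F^f(x), \ldots, F^{(r_0-1)f}(x)$ are linearly independent modulo $\pi$ (forcing $r_0 \leq h$ whenever finite), then defines $N$ at $\tau_0$ by $N(F^f(x)) = x$ and $N(F^{jf}(x)) = 0$ for $j \neq 1$. This yields the closed formula $F_N^{r_0 f}(x) = \sum_{j=0}^{r_0} u^j F^{(r_0-j)f}(x)$, visibly nonzero modulo $\pi$, so $r_0$ strictly increases; the dimension bound $r_0 \leq h$ then plays the role of your Fitting decomposition. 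Your approach is more economical in recycling the preceding lemma wholesale, while the paper's choice of $N$ gives a clean polynomial identity with no leading-term bookkeeping; both terminate in at most $h$ deformation steps.
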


\begin{proof}
By the previous lemma, we can suppose $F^f(x) \not\equiv 0 \pmod\pi$. If $G$ is not itself infinitesimal,
let $r_0$ be the minimal integer such that
\[ F^{r_0f}(x) \equiv 0 \pmod \pi.\]
The family $(x,F^f(x),\dots,F^{(r_0-1)f}(x))$ is linearly independent mod $\pi$.
Indeed, suppose we are given
\[ \lambda_0x + \lambda_1 F^f(x) + \dots + \lambda_{r_0-1}F^{(r_0-1)f}(x) \equiv 0 \pmod \pi;\]
and denote $i$ the smallest integer such that $\lambda_i \neq 0 \pmod \pi$. Then,
\[ F^{if}(x) = \lambda_i^{-1}(\lambda_{i+1} F^{(i+1)f}(x) + \dots + \lambda_{r_0-1}F^{(r_0-1)f}(x)),\]
and thus $F^{(r_0-i-1)f}(F^{if}(x)) = F^{(r_0-1)f}(x) \equiv 0 \pmod \pi$, which is impossible.
Set $N$ such that 
\[ N_{\tau'} = 0, \forall \tau' \neq \tau, \quad Nx = NF^{if}(x) = 0, \forall i \neq 1 \quad\text{and}\quad NF^f(x) = x.\]
Set $D_N,F_N$ the associated display over $W(k[[u]])$ which reduces to $(M,F)$. We calculate,
\[ F_N^f(x) = F^f(x) + uNF^f(x) = F^f(x) + ux,\]
and more generally,
\[ F_N^{if}(x) = F^{if}(x) + uF^{(i-1)f}(x_0) + \dots + u^ix_0.\]
In particular,
\[ F_N^{r_0f}(x) = 0 + uF^{(r_0-1)f}(x) + \dots + u^{r_0}x \neq 0 \pmod \pi.\]
Thus, the base change to $W(k((u))^{perf})$ of $D_N$ satisfies $r_0(D_N) > r_0(M)$. By induction we can suppose that $r_0 > \dim D$, thus that $D$ is not infinitesimal.
\end{proof}

By induction on the number of Newton and Hodge slopes of $G$ that are not equal (see \cref{lemslope0}), we get a chain of generisations starting to $x_G$ and ending to a $\mu$-ordinary point.

\begin{cor}
In case (AL), the $\mu$-ordinary locus $X_{\mu-ord}^{PR}$ is dense.
\end{cor}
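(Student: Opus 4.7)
The plan is to combine the three preceding lemmas into an inductive argument on the height of $G = G_\pi$. Since the $\mu$-ordinary locus is open by Proposition \ref{prop222} together with Propositions \ref{prop218}, \ref{prop219}, \ref{prop220}, it suffices to produce, for every geometric point $x_G$ of the (AL) part of $X_\kappa$ with values in a perfect field $k$, a generisation $\Spec k[[t]] \to X$ whose generic point is $\mu$-ordinary. By density of the generalised Rapoport locus (Theorem \ref{thrRap}) we may further assume $x_G$ lies in it, so that $\mu$-ordinary simply means $\Newt(G) = \Hdg(G)$.

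First I would apply Lemma \ref{lemslope0} to reduce to the case where the first slope of $\Hdg_\tau(G)$ vanishes for every $\tau$. If the first Newton slope of $G$ is also zero, then by \cite{BH} Th\'eor\`eme 1.3.2 there is a canonical splitting $G = G^0 \times G'$ with $G^0$ the \'etale (hence $\mu$-ordinary) summand and $G'$ of strictly smaller height; a $\mu$-ordinary deformation of $G'$ supplies one for $G$, and we close by induction on height. Otherwise the first Newton slope is strictly positive while the first Hodge slope is zero; then the second and third lemmas construct successive display deformations which push $i$ (the minimal integer with $F^i x \equiv 0 \pmod{\pi}$) above $f$ and then produce a non-infinitesimal summand in the generic fibre. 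This splits off a nontrivial \'etale factor over $k((t))^{perf}$, reducing to a $p$-divisible group of strictly smaller height, and the induction closes.

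The main obstacle is ensuring that each deformation produced by the three lemmas actually lies inside $X$, i.e.\ that it carries a compatible Pappas-Rapoport filtration. Since the lemmas deform the crystal only via a nilpotent endomorphism $N$ as in Proposition \ref{Wed328}, while the proof of Theorem \ref{thrRap} shows (via Grothendieck-Messing) that any lift of $\omega$ can be adjusted so as to satisfy the Pappas-Rapoport condition, concatenating such display deformations with compatible lifts of the filtration yields the desired algebraisable morphism $\Spec k[[t]] \to X$ generizing $x_G$ to a $\mu$-ordinary point. Openness of the $\mu$-ordinary locus then upgrades this chain of generisations to density.
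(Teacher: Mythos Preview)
Your proposal is correct and follows essentially the same approach as the paper: reduce to the generalised Rapoport locus via Theorem \ref{thrRap}, normalise so that the first Hodge slope is zero via Lemma \ref{lemslope0}, then use the two subsequent lemmas to produce a non-infinitesimal generisation and split off an \'etale factor, closing by induction. The paper phrases the final induction as being on the number of Newton and Hodge slopes that differ rather than on the height, but the two are equivalent here since each splitting strictly reduces both quantities.
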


\subsection{The case $(AU)$}

In the linear case, as our deformation by $N$ will be polarised, which means that we deform in the same time $F_\tau$ and $F_{\overline\tau}$, these two deformations might cancel out. Thus, we will work almost as in \cite{Wed1}, by finding a deformation sequence (which assures that there will be no cancelations when calculating $F^{2f}_N$).
We denote $\overline M = M\pmod \pi$.
\begin{defin}
$(x_\tau) \in \overline M = \bigoplus_\tau \overline M_\tau$ is a deformation sequence if
\begin{enumerate}
\item $x_\tau \in \overline M_\tau$ for all $\tau$,
\item $Fx_\tau \not\equiv 0 \pmod \pi$ for all $\tau$,
\item If $F^2x_\tau \not\equiv 0 \pmod \pi$, then $Fx_\tau = x_{\sigma\tau}$.
\end{enumerate}
\end{defin}

We will use \cref{Wed328} to deform a given $p$-divisible $\mathcal O$-module to a $\mu$-ordinary one. By \cref{thrRap} we can suppose that the $p$-divisible group $G$ associated to a point $x_G \in X^{PR}$ we start with is in the Rapoport locus, i.e. $\Hdg(x_G) = \PR(x_G)$.
In particular, the first slope of $\Hdg(x_G)$ coincide with the one of $\PR(x_G)$. We will construct the deformation by induction, using that if $G$ is not bi-infinitesimal, we can decompose,
\[ G = G^{et} \times G^{bi} \times G^m,\]
where $G^{et}$ is etale and $G^{m}$ is multiplicative, and $G^{bi}$ is bi-infinitesimal. If $G^{bi}$ is $\mu$-ordinary, then so is $G$, and thus we only need to deform $G^{bi}$.
From now on, suppose $G$ is bi-infinitesimal and we will prove that there exists a deformation of (some modification of) $G$ that is $\mu$-ordinary but not necessarily bi-infinitesimal generically. By induction on the Newton polygon of the bi-infinitesimal part, we will then deduce that we can deform $G$ to a $\mu$-ordinary $p$-divisible group. 
Just like in the beginning of section \ref{thrAL} we can suppose that the first slope of $\Hdg_\tau(x_G)$ is zero for all $\tau$,
 but here there will be some minor complications, and we need to introduce the notion of a non-necessarily parallel polarized $\mathcal O$-crystal. 

\begin{defin}
\label{defseq}
A non-necessarily parallel polarized $\mathcal O$-crystal (of type AU) over a perfect field $k$ is a tuple $(M,V,F,\iota,h(\cdot,\cdot))$ where 
\begin{enumerate}
\item $M$ is a free $W(k)$ module,
\item $F$ is $\sigma$-linear and $V$ is $\sigma^{-1}$-linear,
\item $\iota : \mathcal O \fleche \End_{W(k)}M$ such that $F(\iota(x)m) = \iota(\sigma(x))F(m)$ and $V(\iota(x)m) = \iota(\sigma^{-1}(x))V(m)$ for all $x \in \mathcal O, m \in M$.
\item The pairing 
\[ h( \cdot, \cdot ) : M \times M \fleche  W(k)\otimes \Diff^{-1},\]
is perfect, anti-hermitian, and satisfies
\[ \iota(x) h(m,n) = h( \iota(x)m,n) = h( m,\iota(\overline x)n), \quad \forall x \in \mathcal O,m,n \in M,\] 
\end{enumerate}
such that we can decompose
\[ M = \bigoplus_{\tau : F^{ur}\hookrightarrow \CC} M_\tau,\]
and thus accordingly we have maps,
\[ F_\tau : M_\tau \fleche M_{\sigma\tau} \quad \text{and} \quad V_\tau : M_{\sigma\tau} \fleche M_\tau,\]
such that there exists integers $(a_\tau)_{\tau \in \mathcal T}$ satisfying for all $\tau$, $V_\tau F_\tau  = p\pi^{-a_\tau}\overline{\pi}^{-a_{\overline\tau}} Id_{M_\tau}$, $F_\tau V_\tau = p\pi^{-a_\tau}\overline{\pi}^{-a_{\overline\tau}} Id_{M_{\sigma\tau}}$ and 
\[ h(F_\tau x_\tau, x_{\sigma\overline\tau}) = h(x_\tau,V_{\overline\tau}x_{\sigma\overline\tau})^{\sigma} \quad \text{and}\quad h(V_\tau x_{\sigma\tau}, x_{\overline\tau}) = h(x_{\sigma\tau},F_{\overline\tau}x_{\overline\tau})^{\sigma^{-1}}.\]
The tuples of integer $(a_\tau)_{\tau \in \mathcal T}$ is called the amplitude of $(M,F,V,\iota)$.
\end{defin}

\begin{example}
A polarised $\mathcal O$-crystal is a particular case of a non-necessarily parallel polarized $\mathcal O$-crystal for which the amplitude is constant equal to 0, i.e. $a_\tau =0$ for all $\tau$.
\end{example}

\begin{defin}
We can define the Hodge polygons of a non-necessarily parallel $\mathcal O$-crystal $(M,F,V,\iota)$ in the same way as a regular crystal, by defining the polygon
\[ \Hdg_\tau(M,F,V,\iota)(i) = \frac{c_1 + \dots + c_i}{e},\]
where $d = \dim_{\mathcal O} M_\tau$ and $M_\tau/FM_{\sigma^{-1}\tau} \simeq \bigoplus_{i=1}^d W_{\mathcal O}(k)/\pi^{c_i}W_{\mathcal O}(k)$, and $c_1 \leq c_2 \leq \dots \leq c_d$.
\end{defin}

\begin{prop}
\label{propNNP}
Suppose $k$ is perfect and contains $\kappa_F$.
To any polarised $\mathcal O$-crystal  $(M,F,V,\iota,h)$, we can associate a 
non-necessarily parallel 
polarised $\mathcal O$-crystal 
\[ (M^0,F^0,V^0,\iota^0,h^0),\]
for which the first slope of $\Hdg_\tau(M^0,F^0,V^0,\iota^0)$ is zero. 
Moreover, if $a_\tau$ denote for all $\tau$ the first slope of $\Hdg_\tau(M,F,V,\iota)$, then $(M^0,F^0,V^0,\iota^0,h^0)$ is of amplitude $(a_\tau)_\tau$. 
Moreover, we have $(M^0,\iota^0,h^0) = (M,\iota,h)$ and for all $\tau$,
\[ F_\tau = \pi^{a_\tau}F_\tau^0 \quad \text{and}\quad V_\tau = \overline{\pi}^{a_{\overline\tau}}V_\tau^0.\]
If $(P,Q,V^{-1},F,\iota,h)$ is the base change of $(M,F,V,\iota,h)$ to $k[[t]]$ (i.e. the polarized $\mathcal O$-deformation with $N = 0$ in Proposition \ref{Wed328}), we can also associate to it another (non necessarily parallel) display by the same operation, such that the Dieudonne modules of the base change over $k$ and $k((t))^{perf}$ coincides with the previous construction for crystals. In particular, the association $D \fleche \mathcal P \fleche D' = \mathcal P \otimes k((t))^{perf}$ does not change the Hodge polygons.
\end{prop}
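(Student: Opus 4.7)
The plan is to construct $(M^0,F^0,V^0,\iota^0,h^0)$ by simply setting $M^0 = M$, $\iota^0 = \iota$, $h^0 = h$, and defining the Frobenius and Verschiebung by ``dividing out'' the minimal power of the uniformiser, namely
\[ F^0_\tau := \iota(\pi)^{-a_\tau} F_\tau \qquad \text{and} \qquad V^0_\tau := \iota(\overline\pi)^{-a_{\overline\tau}} V_\tau.\]
The first step is to check that $F_\tau$ really is divisible by $\iota(\pi)^{a_\tau}$. Since the cokernel $M_\tau / F(M_{\sigma^{-1}\tau})$ has elementary divisors $\pi^{c_1},\dots,\pi^{c_d}$ and $a_\tau$ is the first Hodge slope, divisibility is equivalent to the inequality $c_1 \geq a_\tau$, which is precisely what ``first slope equals $a_\tau$'' means. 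Moreover, because $a_\tau$ is attained, $F^0_\tau$ is not divisible by $\iota(\pi)$, so the first Hodge slope of $(M^0,F^0,V^0,\iota^0)$ is zero, as required. The same argument for $V_{\overline\tau}$, combined with the polarisation compatibility $h(F_\tau x,y) = h(x,V_{\overline\tau} y)^\sigma$, forces the valuation of $V_{\overline\tau}$ to also equal $a_\tau$, which is why the factor in $V^0_\tau$ involves $a_{\overline\tau}$ rather than $a_\tau$.

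Next I would check the algebraic identities. Since $\iota(\pi)$ and $\iota(\overline\pi)$ are $W(k)$-linear, they commute with $F$ and $V$ up to the twists coming from their $\sigma$-semi-linearity, and one computes directly
\[ V^0_\tau F^0_\tau = \iota(\pi)^{-a_\tau}\iota(\overline\pi)^{-a_{\overline\tau}} V_\tau F_\tau = p\,\iota(\pi)^{-a_\tau}\iota(\overline\pi)^{-a_{\overline\tau}} \operatorname{Id}_{M_\tau},\]
and likewise for $F^0_\tau V^0_\tau$. This matches the axiom of amplitude $(a_\tau)_\tau$ in Definition \ref{defseq}.

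The most delicate point is the compatibility with the pairing. Starting from the original identity $h(F_\tau x_\tau, y_{\sigma\overline\tau}) = h(x_\tau, V_{\overline\tau} y_{\sigma\overline\tau})^\sigma$ and substituting $F_\tau = \iota(\pi)^{a_\tau} F^0_\tau$ and $V_{\overline\tau} = \iota(\overline\pi)^{a_\tau} V^0_{\overline\tau}$, I use the hermitian property $h(\iota(x)m,n) = h(m,\iota(\overline{x})n)$ to move $\iota(\pi)^{a_\tau}$ from the first to the second argument, where it becomes $\iota(\overline{\pi})^{a_\tau}$. On the right-hand side, $\iota(\overline\pi)^{a_\tau}$ factors out of $h$ as $\iota(\pi)^{a_\tau}$ in $W(k)\otimes\Diff^{-1}$, and applying $\sigma$ (which fixes $\iota(\pi)^{a_\tau}$ coming from the $\mathcal{O}$-factor) produces the same expression. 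Cancelling the common factor yields $h(F^0_\tau x_\tau, y_{\sigma\overline\tau}) = h(x_\tau, V^0_{\overline\tau} y_{\sigma\overline\tau})^\sigma$, as desired. The main obstacle here is purely bookkeeping on the indices $\tau$ versus $\overline\tau$ and on the interaction between $\iota$ and $\sigma$ on the value group of $h$; I expect no genuine difficulty beyond this.

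Finally, for the display statement, the same division procedure applies verbatim: the structural maps $F$ and $V^{-1}$ of the trivial display $(P,Q,F,V^{-1})$ over $k[[t]]$ are again divisible by $\iota(\pi)^{a_\tau}$ because this divisibility can be tested after reduction modulo $t$ (where it holds by assumption on $M$). Base changing to $k$ or to $k((t))^{\mathrm{perf}}$ commutes with the construction because multiplying $F$ by a fixed element of $\iota(\mathcal{O})\subset W_{\mathcal{O}}(k) \subset W_{\mathcal{O}}(k[[t]])$ is a purely formal operation that is compatible with all scalar extensions. The claim that the Hodge polygons are unaffected by $D \mapsto \mathcal{P} \mapsto D'$ then follows because the Hodge polygon sees only the elementary divisors of $F$, and these are shifted uniformly by $a_\tau$ both in the crystal and in its deformation, so the relative position (and hence the image in $Pol$) is unchanged.
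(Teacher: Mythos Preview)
Your proof is correct and follows essentially the same route as the paper's: both set $(M^0,\iota^0,h^0)=(M,\iota,h)$, divide $F_\tau$ by $\pi^{a_\tau}$ and $V_\tau$ by $\overline\pi^{a_{\overline\tau}}$, then verify the amplitude identity and the pairing compatibility by pushing the factor of $\pi^{a_\tau}$ across $h$ using its hermitian property. The only minor difference is in the display paragraph: your justification ``divisibility can be tested after reduction modulo $t$'' is not quite the right argument---the paper instead observes that for the trivial deformation one literally has $F_P = F_M \otimes 1$ on a module that is free over $W(k[[t]])\otimes_{\ZZ_p}\mathcal O$, so divisibility by $\pi^{a_\tau}$ is inherited tautologically and one can define $Q^0$ and $(V^0_\tau)^{-1}$ explicitly---but this is a cosmetic point and your conclusion is the same.
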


\begin{proof}
Indeed, $M$ is a module over $W(k) \otimes_{\ZZ_p}\mathcal O$, which we can split $M = \bigoplus_{\tau} M_\tau$ over $\tau : \mathcal O^{ur} \hookrightarrow \mathcal O_C$.
As $F$ is $\sigma$-linear, $F_\tau : M_\tau \fleche M_{\sigma\tau}$, and $F_\tau$ is generically inversible, thus $M$ is free as $W(k)\otimes_{\ZZ_p}\mathcal O$-module.
By hypothesis, $F_\tau$ is divisible by $\pi^{a_\tau}$, thus set $F^0 = \pi^{-a_\tau} F_\tau$. As $F_{\overline\tau}^\vee = V_{\tau}$, we have that $V_{\tau}$ is also divisible by $\pi^{a_{\overline\tau}}$, thus by $\overline{\pi}^{a_{\overline\tau}}$ and we can set $V_\tau^0 = \overline{\pi}^{-a_{\overline\tau}}V_\tau : M_{\sigma\tau} \fleche M_{\tau}$. By proposition \ref{proppolLan}, $M$ is endowed with an hermitian pairing
\[ h : M \times M \fleche W(k) \otimes_{\ZZ_p} \Diff^{-1},\]
such that $\tr h = <.,.>$ and $h(x,Fy) = h(Vx,y)^\sigma$. Thus, for $x \in M_{\tau},y \in M_{\sigma\overline\tau}$,
\begin{eqnarray*}
h(\pi^{a_\tau}F_\tau^0x,y) =\overline{\pi}^{a_\tau}h(F_\tau^0x,y), \quad \text{and} \quad h(x,\overline{\pi}^{a_{\tau}}V_{\overline\tau}^0y) = h(\pi^{a_\tau}x,V_{\overline\tau}^0y).
\end{eqnarray*}
Thus,
\[ h(F_\tau^0x,y) = h(x,V_{\overline\tau}^0y)^\sigma.\]
$M$ is free over $W(k)\otimes \mathcal O$, and we have that $V^0 M_\tau = \frac{1}{\pi^{a_{\overline\tau}}} VM_{\tau}$ and thus as a display over $W(k)$
\[ (V^0)^{-1} : V^0 M_\tau \overset{\overline{\pi}^{a_{\overline\tau}}}{\fleche} VM_\tau \overset{V^{-1}}{\fleche} M_\tau.\]

Now if $(P,Q,V^{-1},F)$ is the base change of $(M,F,V,\iota,h)$, we have that $P$ is free over $W(k[[t]])\otimes_{\ZZ_p}\mathcal O$.
 We can thus decompose $P$ and $Q$ over $W(R) \otimes_{\ZZ_p}\mathcal O = \prod_\tau W(R)_\tau\otimes_{\mathcal O_\tau^{ur}}\mathcal O = \prod_\tau W(R)_\tau[\pi]/(E_\tau(\pi)).$

We have the $\sigma$-linear morphism
\[ V_\tau^{-1} : Q_{\sigma\tau} \fleche P_\tau,\]
and we set $Q^0 = VM^0 \otimes_{W(k)}W(k[[t]])$ with 
$(V_\tau^{0})^{-1} = (1\otimes\overline\pi^{a_{\overline\tau}})V_\tau^{-1} : Q^0_{\sigma\tau} \overset{\overline\pi^{a_{\overline\tau}}\otimes 1}{\fleche} Q_{\sigma\tau} \overset{V_\tau^{-1}}{\fleche} P_\tau$. As $P_\tau$ is a free $W(R)_\tau[\pi]/(E_\tau(\pi))$-module,
 thus we can also divide $F_{\tau,P} = F_{\tau,M} \otimes 1$ by $\pi^{a_\tau}$ and the rest is a simple verification. As $Q_{\sigma\tau} = \pi^{a_{\overline\tau}} Q^0_{\sigma\tau}$, we have that both 
the $\tau$-Hodge polygons of $D$ and $D'$ are those of $Q^0$, i.e. of $M^0$, with $\tau$-slopes increased by $a_\tau$.  
\end{proof}

\begin{prop}
Denote by $M$ a polarised $\mathcal O$-crystal, and $M^0$ the associated non-parallel crystal associated to it above.
Suppose we have a (polarised, $\mathcal O$-)deformation $\mathcal P^0$ of $M^0$ by a NNP-display over $k[[t]]$ given as in Proposition \ref{Wed328}, then we have an associated (polarised, $\mathcal O$-)deformation $\mathcal P$ of $M$ (with $\mathcal P$ a parallel display) such that $(\mathcal P \otimes_{k[[t]]} k((t))^{perf})^0 = \mathcal P^0 \otimes_{k[[t]]} k((t))^{perf}$.
If moreover the crystal $D^0 = \mathcal P^0 \otimes k((t))^{perf}$ is not bi-infinitesimal, then we can decompose,
\[ D^0 = D^{0,et} \times D^{0-bi} \times D^{0,mult},\]
and we have an associated decomposition of $D = \mathcal P \otimes k((t))^{perf}$,
\[ D = D^{a-et} \times D^{bi} \times D^{a-mult},\]
where $D^{a-et}$ is isocline of slope $\frac{1}{ef}\sum_\tau a_\tau$ and $D^{a-mult} = (D^{a-et})^\vee$. Moreover the association $M \mapsto D$ does not change the Hodge polygons.
\end{prop}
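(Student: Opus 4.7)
My plan is to invert the construction of Proposition~\ref{propNNP} so as to produce $\mathcal{P}$ from $\mathcal{P}^0$, and then to analyse the resulting decomposition and polygons summand by summand. Concretely, I would set $P = P^0$ with the same $W(k[[t]]) \otimes_{\ZZ_p} \mathcal{O}$-module structure, action $\iota$, and pairing $h$, and define the Hodge filtration and Frobenius by $Q_\tau = \overline{\pi}^{a_{\overline{\tau}}} Q^0_\tau$ (inside $P_\tau$) and $F_\tau = \pi^{a_\tau} F^0_\tau$, with $V_\tau^{-1}$ adjusted so that $(V_\tau^0)^{-1}$ factors as $V_\tau^{-1}$ followed by multiplication by $\overline{\pi}^{a_{\overline{\tau}}}$. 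The verifications that $\mathcal{P}$ is a parallel polarised $\mathcal{O}$-display and that the pairing compatibility $h(F_\tau x, y) = h(x, V_{\overline{\tau}} y)^\sigma$ holds will reduce to multiplying the NNP identity $h^0(F^0_\tau x, y) = h^0(x, V^0_{\overline{\tau}} y)^\sigma$ through by $\pi^{a_\tau}$ and invoking $\mathcal{O}$-sesquilinearity, exactly as in the proof of Proposition~\ref{propNNP}. The identity $(\mathcal{P} \otimes k((t))^{perf})^0 = \mathcal{P}^0 \otimes k((t))^{perf}$ will then be tautological.

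Next, assuming $D^0$ is not bi-infinitesimal, its canonical slope-filtration splitting $D^0 = D^{0,et} \times D^{0,bi} \times D^{0,mult}$ is automatically $\mathcal{O}$-equivariant and respected by $h^0$, which identifies $D^{0,et}$ and $D^{0,mult}$ as mutual duals. Applying the reverse NNP construction summand by summand yields a decomposition $D = D^{a-et} \times D^{bi} \times D^{a-mult}$. On $D^{a-et}$ one has $F_\tau = \pi^{a_\tau} F^0_\tau$ with $F^0_\tau$ invertible (since $D^{0,et}$ is etale), so the $\tau$-Newton slope equals $a_\tau/e$, and averaging over the $f$ embeddings yields mean slope $\tfrac{1}{ef}\sum_\tau a_\tau$, making $D^{a-et}$ isocline. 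The identification $D^{a-mult} = (D^{a-et})^\vee$ then follows from the preservation of $h$ under the reverse NNP construction (since $(M^0, \iota^0, h^0) = (M, \iota, h)$), which transfers the duality between the etale and multiplicative summands of $D^0$ to the corresponding summands of $D$.

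Finally, the Hodge-polygon invariance I would deduce from two observations. First, the Zink--Wedhorn deformation $\mathcal{P}^0$ produced by Proposition~\ref{Wed328} modifies only $F^0$ and $(V^0)^{-1}$ via $N$, keeping the Hodge filtration $Q^0$ unchanged, so $\Hdg_\tau(D^0) = \Hdg_\tau(M^0)$ for every $\tau$. Second, the final computation in the proof of Proposition~\ref{propNNP} shows that passing from $M^0$ (respectively $D^0$) to $M$ (respectively $D$) shifts each $\tau$-Hodge polygon by the same quantity $a_\tau$. Combining these gives $\Hdg_\tau(D) = \Hdg_\tau(M)$. The hard part is expected to be the first step: inverting a construction that intertwines $F$, $V^{-1}$ and a pairing, while ensuring that $Q$ remains a locally direct factor of the correct rank and that the anti-hermitian compatibility survives; this should however reduce to a diagram chase with the formulas of Proposition~\ref{propNNP} read backwards.
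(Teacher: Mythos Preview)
Your proposal is correct, but it takes a slightly different route from the paper's own argument, and the comparison is worth noting.

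The paper does not invert the construction of Proposition~\ref{propNNP}. Instead, it exploits the specific shape of the deformation: since $\mathcal{P}^0$ is given by an endomorphism $N$ (square-zero, skew-symmetric, $\mathcal{O}$-linear) via $F^0_N = (\id + [t]N)F^0$, one simply defines $\mathcal{P}$ to be the Wedhorn deformation of $M$ built from the \emph{same} $N$, i.e.\ $F_N = (\id + [t]N)F$. This $\mathcal{P}$ is then automatically a genuine (parallel) polarised $\mathcal{O}$-display deforming $M$, by Proposition~\ref{Wed328} applied to $M$; no display axioms need to be re-verified. The only thing left is the compatibility $(\mathcal{P} \otimes k((t))^{perf})^0 = \mathcal{P}^0 \otimes k((t))^{perf}$, and this is a one-line computation: since $N$ is $\mathcal{O}$-linear it commutes with multiplication by $\pi^{a_\tau}$, so $\tfrac{1}{\pi^{a_\tau}}(\id+[t]N)F_\tau = (\id+[t]N)\tfrac{1}{\pi^{a_\tau}}F_\tau = (\id+[t]N)F^0_\tau$, i.e.\ $(F_N)^0 = (F^0)_N$.

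Your approach shifts the work: the compatibility becomes tautological, but you must instead check by hand that the inverted object $\mathcal{P}$ is a bona fide parallel display with the right $Q$, the right pairing compatibility, and that it reduces to $M$ at $t=0$. This is doable (and you correctly identify the relevant formulas), but it is precisely the ``hard part'' you flag at the end. The paper's route sidesteps it entirely by re-using Proposition~\ref{Wed328}. For the decomposition, the slope computation of $D^{a-et}$, and the Hodge-polygon invariance, your arguments agree with the paper's.
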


\begin{proof} Let $\mathcal P^0 = (P^0,Q^0,F^0,(V^0)^{-1})$ be the display associated to the base change of $M^0$ to $k[[t]]$, and $\mathcal P$ the analogous display for $M$.
Suppose given $N$ a $W(k)$-linear morphism of $P_0 \otimes_{k[[t]]} k$ of square zero, which is $\mathcal O$-linear, and skew-symmetric. We thus have a deformation 
$\mathcal P^0_N$ by setting $F^0_N = (\id + [t]N)F^0$ and $(V^0_N)^{-1} = (\id + [t]N) (V^0)^{-1}$. Let us set $\mathcal P_N$ the analogous deformation for $\mathcal P$.

We claim that the crystal $\mathcal P_N \otimes_{k[[t]]} k((t))^{perf}$ satisfies $(\mathcal P_N \otimes_{k[[t]]} k((t))^{perf})^0 = \mathcal P^0_N \otimes_{k[[t]]} k((t))^{perf}$.
Indeed, as $F_N = (\id + [t]N)F$ on $\mathcal P$, and $F$ is the pullback of $F$ on $\underline M$, we have that $F_\tau$ is divisible by $\pi^{a_\tau}$ for all $\tau$, thus on $\mathcal P \otimes k((t))^{perf}$ $(F_N)^0$ is $\frac{1}{\pi^{a_\tau}}F_N = (\id + [t]N) \frac{1}{\pi^{a_\tau}}F$ as $N$ is $\mathcal O$-linear, and this is $(F^0)_N$. The same is true for $V^{-1}$, thus we have the claim. Now if $D^0$ is not bi-infinitesimal, we have a decomposition
\[ D^0 = D^{0-et} \times D^{0-bi} \times D^{0,mult},\]
where $D^{0,mult} \simeq (D^{0,et})^\vee$ by the polarisation on $D^0$. But as $D^0 = (D)^0$, we have the asserted decomposition of $D$, and a direct calculation gives the slope of $D^{a-et}$. As passing from $\mathcal P$ to $\mathcal P_N$ does not change the Hodge filtration, we have the assertion on Hodge polygons.
%
\end{proof}

With the previous proposition we can explain our strategy. We start with a point in the Rapoport locus. As any deformation of it is still in the Rapoport locus, by the previous proposition for example, we will be able to lift the Pappas-Rapoport filtration canonically (cf \cref{thrRap}) and the deformation will still be in the Rapoport locus, thus we can forget about the Pappas-Rapoport datum for now. Then we will modify the crystal $M$ of our $p$-divisible group by the previous proposition, deform this crystal inductively by a display, and ultimately the non-necessarily parallel crystal deformation $N^0$ will not be infinitesimal anymore. Thus the associated deformation of $M$ will split, and by induction on the Newton polygon, we will be able to conclude. More precisely we can always write
\[ G = G^1 \times G^{00} \times (G^1)^D,\]
by Hodge-Newton decomposition (\cite{BH} Théorème 1.3.2), where $G^1$ is the biggest subgroup of $G$ such that the Hodge and Newton polygon of $G^1$ are equal. Remark that $G^{00}$ is still polarized (and in the Rapoport locus if $G$ is). The induction is on the height of $G^{00}$.

There will be a slight issue in the case where $a_\tau + a_{\overline\tau} = e$ for all $\tau$ in our method. Fortunately we have the following proposition
\begin{prop}
\label{propmuord}
Let $M$ be a polarised $\mathcal O$-crystal such that for all $\tau$, if $a_\tau$ denotes the first slope of $\Hdg_\tau$, $a_\tau + a_{\overline\tau} =e$. Then the $p$-divisible group associated to $M$ is $\mu$-ordinary.
\end{prop}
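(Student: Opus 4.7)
The plan is to exploit the rigidity imposed by the hypothesis: the condition $a_\tau + a_{\overline\tau} = e$ is an extremal equality which, combined with the polarisation-induced duality between the Hodge polygons at conjugate embeddings, will force every Hodge slope at each $\tau$ to equal $a_\tau$. Once the Hodge polygon at each $\tau$ is a straight line, the resulting rigid form of Frobenius lets one compute the Newton polygon directly and show that all three polygons coincide.

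First, I would write the integer Hodge slopes of $\Hdg_\tau$ as $c_1 \leq \dots \leq c_h \in \{0,1,\dots,e\}$, so that $a_\tau = c_1$. The polarisation pairing of \autoref{proppolLan} identifies $M$ with $M^{D,(c)}$ as $\mathcal O$-crystals, giving $\Hdg_\tau(G) = \Hdg_{\overline\tau}(G^D)$. Cartier duality on $G$ sends the integer slopes at $\overline\tau$ to $(e - c_h) \leq \dots \leq (e - c_1)$, so that $a_{\overline\tau} = e - c_h$. The hypothesis then reads $c_1 + (e - c_h) = e$, forcing $c_1 = c_h$: every Hodge slope at $\tau$ equals $a_\tau$. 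Consequently $F_\tau \colon M_{\sigma^{-1}\tau} \to M_\tau$ has cokernel $(\mathcal O/\pi^{a_\tau})^h$ and therefore factors as $F_\tau = \pi^{a_\tau} \tilde F_\tau$ for an $(\mathcal O \otimes W(k))$-linear, $\sigma$-semilinear isomorphism $\tilde F_\tau$. Equivalently, the NNP crystal associated by \autoref{propNNP} satisfies $V^0_\tau F^0_\tau = p \pi^{-a_\tau}\overline\pi^{-a_{\overline\tau}} = u$, a unit (since $\pi = \overline\pi$ in cases (C) and (AU)), so every $F^0_\tau$ is itself an isomorphism.

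Iterating the Frobenius along the $\sigma$-orbit of $\tau$ then gives $F^f = \pi^A \tilde F^f$ on $M_\tau$ with $A = \sum_{\tau' \in \mathcal T} a_{\tau'}$, and consequently $F^{ef} = p^A u^A (\tilde F^f)^e$ is $p^A$ times a $\sigma^{ef}$-semilinear isomorphism. The Newton polygon of $G$ is therefore a single straight line of slope $A/(ef)$. On the other hand, the global Hodge polygon, being the pointwise mean of the $\Hdg_\tau$ (each a straight line of slope $a_\tau/e$), is also the straight line of slope $A/(ef)$, so $\Newt(G) = \Hdg(G)$. Combined with $\Newt \geq \Hdg \geq \PR$ and the observation that this extremal Hodge polygon leaves no room for $\PR$ to lie strictly below, we conclude $\Newt = \Hdg = \PR$ and $G$ is $\mu$-ordinary. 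The main obstacle I expect is precisely this last identification $\Hdg = \PR$: one needs to check that the Pappas-Rapoport polygon, built from the combinatorial data $(d_\tau^j)$, coincides with the straight-line Hodge polygon forced by the hypothesis. This should follow from the fact that when every Hodge slope at $\tau$ equals $a_\tau$, the Pappas-Rapoport filtration of $\omega_\tau$ saturates its defining constraints, so $\PR_\tau$ and $\Hdg_\tau$ must agree pointwise at each $\tau$.
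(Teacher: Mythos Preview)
Your argument is essentially the paper's: both establish that the normalised Frobenius $F^0_\tau = \pi^{-a_\tau} F_\tau$ is an isomorphism, whence $G$ is isoclinic. You reach this via the duality relation between the Hodge slopes at $\tau$ and $\overline\tau$ (forcing $c_1 = c_h$); the paper proceeds directly from the identity $F^0_\tau V^0_\tau = p\,\pi^{-a_\tau}\overline\pi^{-a_{\overline\tau}}$, which is a unit under the hypothesis --- the very route you also record as ``equivalently''. Both routes are correct and yield the same conclusion.

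Your self-identified obstacle at the end is not a real obstacle, and you are overcomplicating the final step. Once you know $\Newt(G)$ is a straight line you are done: the Newton and $\PR$ polygons share the same endpoints, $\PR$ is convex, and a straight line is the \emph{minimal} convex polygon between two given endpoints, so $\PR \geq \Newt$; combined with the general inequality $\Newt \geq \PR$ this forces $\Newt = \PR$. There is no need to pass through $\Hdg$ or to invoke any saturation property of the Pappas--Rapoport filtration. This is exactly what the paper means by ``isoclinic, hence $\mu$-ordinary''.
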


\begin{proof}
As $F_\tau V_\tau = pId_{M_\tau}$ for all $\tau$, this implies that $F^0_\tau$ and $V_\tau^0$ are invertible $\sigma$ and $\sigma^{-1}$ respectively $W(k)\otimes \mathcal O$-morphisms. Thus the Newton slopes of $F^0_{\sigma^{f-1}\tau}\circ\dots\circ F_\tau^0$ are all equal to one, and thus $F_{\sigma^{f-1}\tau}\circ\dots\circ F_\tau$ (which we write usually $F^f$) is isocline of slope $\frac{1}{e}\sum_\tau a_\tau$. Thus $G$ is isocline, and thus $\mu$-ordinary.
\end{proof}

Thus let $x_G \in X^{PR}$ be a point in the Rapoport locus, and denote $M$ the bi-infinitesimal part of its crystal. For now on by proposition \ref{propNNP}, we can suppose that $M$ is a non-necessarily parallel crystal, whose first slope for $\Hdg_\tau$ is zero for all $\tau$.
In particular this means that for all $\tau$, there exists
$x \in M_\tau$ such that 
\[ F^0(x) \not\equiv 0 \pmod{\pi}.\] 

By proposition \ref{propmuord}, we can moreover suppose that there exists $\tau_0$ such that $a_{\tau_0} + a_{\overline{\tau_0}} < e$ (otherwise $x_G$ is in the $\mu$-ordinary locus and we are done). Thus we have that $F_{\tau_0}^0V_{\tau_0}^0 \equiv 0 \pmod \pi$.

\begin{lemm}
\label{lemmB7}
Denote $f = 2d$ where $[F_\pi:\QQ_p] = ef$. Let $M/W(k)$ be a NNP polarised $\mathcal O$-crystal as before (bi-infinitesimal with first slope of $Hdg_\tau$ being zero).
Then there exists a deformation of $M$ to a (NNP polarised $\mathcal O$)-display $\mathcal P$ over $k[[X]]$ and $x \in M_{\tau_0}$ such that $F^{d}(x) \neq 0 \pmod\pi$.
\end{lemm}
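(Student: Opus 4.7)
The plan is to mimic the argument used in case (AL) (the two lemmas of Section 4.2), but apply it to a \emph{polarisation-compatible} nilpotent operator $N$ as in Proposition \ref{Wed328}. The key geometric input is that in case (AU) the non-trivial automorphism $c$ of $F^{ur}/F^{+,ur}$ acts on $\mathcal T$ by $\tau \mapsto \sigma^d \tau$ (since $[F^{ur}:F^{+,ur}]=2$ and $[F^{+,ur}:\QQ_p]=d$), so the $\sigma$-cycle of embeddings has length $f=2d$ and dual positions sit exactly at distance~$d$. Concretely, I will prove by induction on
\[ i(x) := \min\{ j \geq 1 : F^j(x) \equiv 0 \pmod \pi\}\]
that if $x \in M_{\tau_0}$ satisfies $F(x) \not\equiv 0 \pmod \pi$ and $i(x) \leq d$, then one can produce a NNP polarised $\mathcal O$-display deformation $\mathcal P$ over $k[[X]]$ whose generic fibre (in the sense of Proposition~\ref{Wed328}) strictly increases $i(x)$. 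Such an $x$ exists to start the induction because the first slope of $\Hdg_{\tau_0}$ is zero.

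For the inductive step, set $y := F^{i-1}(x) \in M_{\sigma^{i-1}\tau_0}$; it is non-zero modulo $\pi$ while $F(y) \equiv 0 \pmod\pi$. By the zero first Hodge slope hypothesis at $\sigma^{i-1}\tau_0$, choose $x_{i-1} \in M_{\sigma^{i-1}\tau_0}$ with $F(x_{i-1}) \not\equiv 0 \pmod\pi$; then $(y, x_{i-1})$ is linearly independent in $M_{\sigma^{i-1}\tau_0}/\pi M_{\sigma^{i-1}\tau_0}$. Define $N$ as follows: on $M_{\sigma^{i-1}\tau_0}$ set $N(y) = x_{i-1}$, $N(x_{i-1}) = 0$, and extend $\mathcal O$-linearly by zero on a $\mathcal O$-complement of the line spanned by $y$ (chosen to contain the $\mathcal O$-line through $x_{i-1}$); on $M_{\sigma^{d+i-1}\tau_0} = M_{\overline{\sigma^{i-1}\tau_0}}$, extend $N$ by the unique skew-hermitian relation forced by $h$; set $N = 0$ elsewhere. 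One checks directly that $N$ is $\mathcal O$-linear, skew-hermitian with respect to $h$, and of square zero (because its only non-zero blocks are at the two conjugate positions and each block sends its image into its own kernel). Proposition~\ref{Wed328} then produces the desired deformation $\mathcal P$ with $F_N = (\id + [X]N)F$.

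It remains to evaluate $F_N^{i}(x)$. Since $i \leq d$, the embeddings $\tau_0, \sigma\tau_0, \dots, \sigma^{i-1}\tau_0$ are all distinct from $\sigma^{d+i-1}\tau_0$, so the block of $N$ at the dual position is never activated along the chain. Exactly the telescoping of case (AL) therefore gives
\[ F_N^{i-1}(x) \equiv y + [X]\cdot x_{i-1} \pmod{\pi}, \qquad F_N^{i}(x) \equiv [X]\cdot F(x_{i-1}) \pmod{\pi},\]
which is non-zero after inverting $X$; thus $i$ has strictly increased on $\mathcal P \otimes_{k[[X]]} k((X))^{perf}$. Iterating the construction, after finitely many steps we reach $i(x) > d$, i.e.\ $F^d(x) \not\equiv 0 \pmod{\pi}$, as required.

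The main obstacle is the very last observation: the skew-hermitian extension of $N$ to the conjugate embedding must not interfere with the computation of $F_N^{i}(x)$. This is exactly why the statement asks for $F^d(x)$ rather than $F^f(x)$: as long as $i \leq d$ the chain $x, F(x), \dots, F^{i-1}(x)$ lives in embeddings disjoint from the dual position, but once $i$ exceeds $d$ the chain would wrap around to the conjugate block of $N$ and the polarisation constraint would prevent the free manipulation enjoyed in case (AL). The verification that the $\mathcal O$-linearity and $N^2=0$ conditions survive the skew-hermitian extension is a straightforward bookkeeping step using Proposition~\ref{proppolLan}.
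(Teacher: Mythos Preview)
Your argument is correct and essentially identical to the paper's own proof: both pick $x\in M_{\tau_0}$ with $F(x)\not\equiv 0\pmod\pi$, locate the first index where the chain dies, place a nilpotent $N$ at that embedding sending the dying element to a fresh $F$-nonvanishing one, extend skew-hermitianly to the conjugate embedding $\overline{\sigma^{i-1}\tau_0}=\sigma^{d+i-1}\tau_0$, and compute $F_N^{i}(x)\equiv [X]\,F(x_{i-1})\not\equiv 0\pmod\pi$. Your explicit remark that the chain $\tau_0,\dots,\sigma^{i}\tau_0$ avoids the conjugate block precisely because $i\le d$ makes an implicit step in the paper more transparent, but otherwise the two proofs coincide.
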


\begin{proof}
Suppose that it is not the case, i.e. for all $x \in M_{\tau_0}$ $F^{d}(x) \equiv 0 \pmod \pi$. Take $x \in M_{\tau_0}$ such that $F(x) \not\equiv 0 \pmod \pi$.
Let $r$ be the maximal integer such that $F^r(x) \not\equiv 0 \pmod \pi$. $F^r(x) \in M_{\sigma^r\tau}$, and take $y \in M_{\sigma^r\tau}$ such that $F(y) \not\equiv 0 \pmod \pi$.
Thus $F^r(x)$ and $y$ are not colinear and are non-zero modulo $\pi$, thus we can construct an endomorphism $N_r$ of $M_{\sigma^r\tau}$ that is $\mathcal O_{\sigma^r\tau}$-linear and such that $N_r(F^r(x)) = y$ and $N_r(y) =0$, and $N^2_r = 0$.
Then set $N_{\overline{r}} = - N_r^* \in \End(M_{\sigma^r\overline{\tau}})$ and for every embedding $\chi \neq \sigma^r\tau, \sigma^r\overline\tau$, set $N_{\chi} = 0$.
$N$ is $\mathcal O$-linear, polarised and $N^2 = 0$. Now in $P_N = M\otimes_{W(k)} W(k[[X]])$, we can calculate,
\begin{eqnarray*} F_N^{r+1}(x \otimes 1) = F_N^2(F_N^{r-1}(x\otimes 1)) = F_N^2(F^{r-1}(x)\otimes 1) = F_N(F^r(x)\otimes 1 + y \otimes X) \\
= F^{r+1}(x) \otimes 1 + F(y) \otimes X \equiv XF(y) \not\equiv 0 \pmod \pi.\end{eqnarray*}
By induction, we can suppose that $F^d(x) \not\equiv 0 \pmod \pi$ up to deform $M$. 
\end{proof}

\begin{lemm}
Let $M$ as in the conclusion of the previous lemma. There exists a deformation of $M$ such that there exists $y \in M_{\overline{\tau_0}}$ satisfying
\[ F^d(y) \not\equiv 0 \pmod \pi.\]
Moreover there is still $x \in M_{\tau_0}$ such that $F^d(x) \not\equiv 0 \pmod \pi$. 
\end{lemm}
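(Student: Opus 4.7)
The plan is to imitate the strategy of Lemma \ref{lemmB7}, now applied to the conjugate embedding $\overline{\tau_0}$, while verifying that the deformation does not destroy the property already established for $\tau_0$. If some $y \in M_{\overline{\tau_0}}$ already satisfies $F^d(y) \not\equiv 0 \pmod \pi$, there is nothing to prove. Otherwise, since the first slope of $\Hdg_{\overline{\tau_0}}$ is zero after the normalisation of Proposition \ref{propNNP}, I would pick $y \in M_{\overline{\tau_0}}$ with $F(y) \not\equiv 0 \pmod \pi$ and let $r' < d$ be maximal with $y_0 := F^{r'}(y) \not\equiv 0 \pmod \pi$. I would also pick $z \in M_{\sigma^{r'}\overline{\tau_0}}$ with $F(z) \not\equiv 0 \pmod \pi$; since $F(y_0) = F^{r'+1}(y) \equiv 0 \pmod \pi$ by maximality of $r'$, the elements $y_0$ and $z$ are linearly independent modulo $\pi$.

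Next I would define $N' \in \End(M_{\sigma^{r'}\overline{\tau_0}})$ by $N'(y_0) = z$ and $N'(v) = 0$ on a complementary basis chosen to contain $z$, so that $(N')^2 = 0$ and $N'$ is $\mathcal O$-linear. Setting $N_{\sigma^{r'}\tau_0} := -(N')^{*}$ (the adjoint with respect to the pairing $h$ of Proposition \ref{proppolLan}) and $N_\chi := 0$ for all other $\chi$ produces a polarised, $\mathcal O$-linear endomorphism $N$ of $M$ with $N^2 = 0$. The only non-formal verification is that $(N_{\sigma^{r'}\tau_0})^2 = 0$; a direct calculation identifies $N_{\sigma^{r'}\tau_0}(a) = -h(z,a)\, y_0^\vee$ with $y_0^\vee$ the dual of $y_0$, and the square then reduces to the orthogonality $h(y_0^\vee, z) = 0$, which holds automatically since $z$ is one of the complementary basis vectors. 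Proposition \ref{Wed328} then supplies a polarised, $\mathcal O$-equivariant display deformation $\mathcal P_N$ of $M$ over $k[[t]]$.

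The Frobenius of $\mathcal P_N$ satisfies $F_N^{r'+1}(y) = F^{r'+1}(y) + [t]\, F(N'(y_0)) = [t]\, F(z) \not\equiv 0 \pmod \pi$ in the generic fibre $\mathcal P_N \otimes_{k[[t]]} k((t))^{\mathrm{perf}}$, so the maximal exponent for the $\overline{\tau_0}$-chain has strictly increased. On the $\tau_0$-side one has $F_N^d(x) = F^d(x) + [t]\, C$, where $C$ arises from the single perturbation of $N_{\sigma^{r'}\tau_0}$ at step $r'$; since $F^d(x) \not\equiv 0 \pmod \pi$ lies in the ``constant part" $M \otimes 1$ of $M \otimes W(k((t))^{\mathrm{perf}})$ whereas $[t]\, C$ lies in the ideal generated by $[t]$, the two cannot cancel modulo $\pi$, so the property for $x$ survives. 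Iterating this construction successively over $k$, then $k((t))^{\mathrm{perf}}$, and so on, the exponent $r'$ strictly increases at each stage and reaches $d$ after finitely many steps, producing the required $y$ while $x$ is preserved throughout.

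The main obstacle will be the polarised bookkeeping: showing that the adjoint $(N')^{*}$ really does square to zero (which, as above, reduces to an orthogonality forced by choosing the complementary basis to contain $z$), and justifying that the $[t]$-order perturbation of $F_N^d(x)$ cannot cancel $F^d(x)$ modulo $\pi$ — this last point relies on the clean separation between the constant summand and the ideal $([t])$ inside $W(k((t))^{\mathrm{perf}})$ after reduction modulo $\pi$, together with the fact that the perturbation $C$ is manifestly divisible by $[t]$ because $N$ only enters via the formula $F_N = (\id + [t]N)F$.
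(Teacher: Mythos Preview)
Your proof is correct and follows essentially the same approach as the paper's: pick $y$ with $F(y)\not\equiv 0$, let $r$ be maximal with $F^r(y)\not\equiv 0$, set $N_{\sigma^r\overline\tau_0}(F^r(y))=z$ and $N_{\sigma^r\tau_0}=-N_{\sigma^r\overline\tau_0}^*$, then compute $F_N^{r+1}(y)\not\equiv 0\pmod\pi$ and observe $F_N^d(x)\equiv F^d(x)\pmod{[t]}$ so the $\tau_0$-property survives. One minor simplification: your concern about $(N')^{*2}=0$ is unnecessary, since $((N')^*)^2=((N')^2)^*=0$ automatically.
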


\begin{proof}
If it is not already the case for $M$, let $y \in M_{\overline{\tau_0}}$ such that $F(y) \not\equiv 0 \pmod \pi$ and denote $r$ the maximal integer such that 
$F^r(y) \not\equiv 0 \pmod \pi$. We will construct a deformation such that 
$F^{r+1}(y) \not\equiv 0 \pmod\pi$. Choose $z \in M_{\sigma^r\overline{\tau}}$ such that $F(z) \not\equiv 0 \pmod \pi$.
We then set $N$ as in the previous lemma : 
\[ N_{\sigma^r\overline\tau}(F^r(y)) = z, \quad N_{\sigma^r\overline\tau}(z) = 0, \quad N_{\sigma^r\tau} = - N_{\sigma^r\overline\tau}^*, \quad N_\chi = 0\quad \forall \chi \neq \sigma^r\tau,\sigma^r\overline\tau.\]
The same calculation shows that $F_N^{r+1}(y \otimes 1) \not \equiv 0 \pmod \pi$. Moreover, $F_N^d(x)$ reduces to $F^d(x)$ modulo $X$, thus $F_N^d(x)$ is still non zero modulo $\pi$ as $F^d(x)$ is.
\end{proof}

\begin{lemm}
\label{lemB9}
Let $M$ be as in the conclusion of Lemma \ref{lemmB7}. Then there exists a deformation such that $F^{2d}(x) \not\equiv 0 \pmod \pi$ for some $x \in M_{\tau_0}$.
\end{lemm}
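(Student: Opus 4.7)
The plan is to mimic the deformation-via-Proposition~\ref{Wed328} strategy used in Lemma~\ref{lemmB7} and the lemma that follows it: I construct a square-zero, $\mathcal{O}$-linear, skew-symmetric $W(k)$-endomorphism $N$ of $M$ supported in $M_{\tau_0}\oplus M_{\overline{\tau_0}}$, and verify by direct computation that the resulting display deformation $\mathcal P_N$ satisfies $F_N^{2d}(x)\not\equiv 0\pmod\pi$ generically. Applying the preceding lemma if necessary, I may assume at the outset that $M$ carries both $x\in M_{\tau_0}$ with $F^d(x)\not\equiv 0\pmod\pi$ and $y\in M_{\overline{\tau_0}}$ with $F^d(y)\not\equiv 0\pmod\pi$. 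In case (AU) the involution $\star$ coincides with $\sigma^d$ on $\mathcal T$, so $F^d(x)\in M_{\overline{\tau_0}}$ and $F^d(y)\in M_{\tau_0}$.

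If $F^{2d}(x)\not\equiv 0\pmod\pi$ already, the trivial deformation works, so assume $F^{2d}(x)\equiv 0\pmod\pi$ for every $x\in M_{\tau_0}$. Under this assumption, $F^d(x)$ and $y$ are automatically $k$-linearly independent in $M_{\overline{\tau_0}}/\pi M_{\overline{\tau_0}}$: a congruence $y\equiv \lambda F^d(x)\pmod\pi$ would force $F^d(y)\equiv \lambda^{p^d} F^{2d}(x)\equiv 0\pmod\pi$, contradicting the choice of $y$. By Nakayama I may therefore extend $\{F^d(x),y\}$ to part of an $\mathcal O_{\overline{\tau_0}}$-basis of an $\mathcal O_{\overline{\tau_0}}$-direct summand of $M_{\overline{\tau_0}}$, and define an $\mathcal O_{\overline{\tau_0}}$-linear endomorphism $N_{\overline{\tau_0}}$ by $F^d(x)\mapsto y$, $y\mapsto 0$, and $0$ on the chosen complement; by construction $N_{\overline{\tau_0}}^2=0$. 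Set $N_{\tau_0}:=-N_{\overline{\tau_0}}^*$, the negative adjoint with respect to the $\star$-hermitian pairing $h$ of Proposition~\ref{proppolLan}, and $N_\tau:=0$ for all other $\tau\in\mathcal T$. Since the adjoint of an $\mathcal O$-linear endomorphism for an $\mathcal O$-hermitian pairing is again $\mathcal O$-linear, the resulting $N$ is $\mathcal O$-linear, skew-symmetric by construction, and satisfies $N^2=0$, so it meets the hypotheses of Proposition~\ref{Wed328} and yields a deformation $\mathcal P_N$ over $k[[X]]$.

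The computation of $F_N^{2d}(x)$ is then iterative. Because $N$ vanishes on $M_{\sigma^j\tau_0}$ whenever $j\not\equiv 0,d\pmod{2d}$, and because $F$ is $\sigma$-semilinear so that $F([X^n]m)=[X^{pn}]F(m)$, a straightforward induction gives
\[
F_N^{d+k}(x)\;=\;F^{d+k}(x)+[X^{p^k}]\,F^k(y),\qquad 0\le k\le d-1.
\]
A final application of $F_N=(1+[X]N_{\tau_0})F$ yields
\[
F_N^{2d}(x)\;=\;F^{2d}(x)+[X^{p^d}]F^d(y)+[X]N_{\tau_0}(F^{2d}(x))+[X^{1+p^d}]N_{\tau_0}(F^d(y)).
\]
Reducing mod $\pi$ and using the standing assumption $F^{2d}(x)\equiv 0\pmod\pi$, the two surviving terms $[X^{p^d}]F^d(y)$ and $[X^{1+p^d}]N_{\tau_0}(F^d(y))$ have distinct $X$-valuations $p^d$ and $1+p^d$, and the leading coefficient $F^d(y)$ is non-zero mod $\pi$, so $F_N^{2d}(x)\not\equiv 0\pmod\pi$ after base change to $W(k((X))^{\mathrm{perf}})$, which is the desired conclusion.

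The principal obstacle is the polarization constraint, which forces the presence of the term $N_{\tau_0}=-N_{\overline{\tau_0}}^*$; a priori this contribution could interfere with the desired term $[X^{p^d}]F^d(y)$. What saves the argument is the strict disparity between the $X$-valuations $p^d$ and $1+p^d$, so the polarization-induced correction cannot cancel the leading term and poses no genuine obstruction.
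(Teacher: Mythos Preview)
Your proof is correct and follows essentially the same approach as the paper: construct $N$ supported on $M_{\overline{\tau_0}}\oplus M_{\tau_0}$ with $N_{\overline{\tau_0}}(F^d(x))=y$, $N_{\overline{\tau_0}}(y)=0$, $N_{\tau_0}=-N_{\overline{\tau_0}}^*$, then compute $F_N^{2d}(x)$ and observe that the $F^d(y)$-term survives modulo $\pi$. Your bookkeeping on the Teichm\"uller lifts is in fact more careful than the paper's, which writes the coefficients informally as $X$ and $X^2$ rather than $[X^{p^d}]$ and $[X^{1+p^d}]$; the paper's concluding argument (reduce the factor $F^d(y)+XNF^d(y)$ modulo $X$) and your valuation argument are equivalent ways of seeing that the leading term cannot be cancelled.
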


\begin{proof}
If it is not already the case, let $x$ be the element given in \ref{lemmB7} and up to deform $M$ we can also have an element $y \in M_{\overline\tau}$ as in the previous lemma.
Then we can construct an $\mathcal O$-linear $N$ such that $N^2 = 0$ and 
\[ N_{\overline\tau}F^d(x) = y, N_{\overline\tau}(y) = 0, N_\tau = - N_{\overline\tau}^*, \quad N_\chi = 0 \quad \forall \chi \neq \tau,\overline\tau.\]
Set $(P,Q,F_N,V^{-1}_N)$ as in \cite{Wed1}. Then we can calculate,
\[ F_N^{2d}(x) = F^{2d}(x) + XNF^{2d}(x) + XF^d(y) + X^2NF^d(y).\]
But $F^{2d}(x) \equiv 0 \pmod \pi$, thus $NF^{2d}(x)\equiv 0 \pmod \pi$ as well by linearity of $N$. Moreover $F^d(y) \not\equiv 0 \pmod\pi$ thus 
\[F^d(y) + XNF^d(y) \not\equiv 0 \pmod \pi,\]
as it is the case modulo $X$, and thus $F_N^{2d}(x) \not\equiv 0 \pmod \pi$. 
\end{proof}

\begin{prop}
Let $x \in X^{PR}(k)$. Then there exists a sequence of generisations $x_i, i = 0,\dots,n$ such that $x_i \in X^{PR}(k_i[[X]])$ for all $i = 1,\dots,n$ and some perfect field $k_i$ above $k_{i-1}((X))$ with $k_1 = k$ and $x_0 = x$, $x_i \pmod X = x_{i-1} \otimes_{k_{i-1}[[X]]} k_i$, and $x_n \otimes_{k_n[[X]]} k_n((X))$ is $\mu$-ordinary.
 \end{prop}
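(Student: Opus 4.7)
The plan is to prove the proposition by induction on the height of the bi-infinitesimal part of the Dieudonn\'e module at $x$, chaining together the deformations already constructed in Lemmas \ref{lemmB7} and \ref{lemB9} and the intermediate lemma between them. By \cref{thrRap}, we may first prepend a generisation $x_0 \rightsquigarrow x_1$ that lands in the generalised Rapoport locus. Since any display-deformation of a point in that locus still lies in it (the Pappas-Rapoport filtration lifts canonically, as in the proof of \cref{thrRap}), we may ignore the Pappas-Rapoport datum in what follows and only track the polarised $p$-divisible $\mathcal{O}$-module structure.

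Now apply the Hodge-Newton decomposition (\cite{BH} Th\'eor\`eme 1.3.2) at our point to write $G = G^1 \times G^{00} \times (G^1)^D$, where $G^1 \times (G^1)^D$ is already $\mu$-ordinary; it is then enough to produce a sequence of generisations of $G^{00}$ whose final generic fibre has strictly smaller bi-infinitesimal part. Replace the polarised $\mathcal{O}$-crystal $M$ of $G^{00}$ by its associated NNP polarised $\mathcal{O}$-crystal $M^0$ of amplitude $(a_\tau)$, using \ref{propNNP}; the first slope of each $\Hdg_\tau(M^0)$ is then zero. If $a_\tau + a_{\overline{\tau}} = e$ for every $\tau$, \ref{propmuord} tells us that $G^{00}$ is itself $\mu$-ordinary and the induction terminates. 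Otherwise, pick $\tau_0$ with $a_{\tau_0} + a_{\overline{\tau_0}} < e$.

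With this $\tau_0$ chosen, successively apply Lemma \ref{lemmB7}, the intermediate lemma (which simultaneously secures an element $y \in M^0_{\overline{\tau_0}}$ with $F^d(y) \not\equiv 0 \pmod{\pi}$ while preserving the previous property for $x$), and Lemma \ref{lemB9}. Each application yields a generisation to some $k_i[[X]]$ that corresponds under \ref{propNNP} and the subsequent proposition on $\mathcal{P} \leftrightarrow \mathcal{P}^0$ to an honest display-deformation of the polarised $\mathcal{O}$-crystal $M$, and by the remark on the canonical extension of the Pappas-Rapoport filtration the resulting point remains in $X^{PR}$. At the end of this block we obtain a generic fibre in which there exists $x \in M^0_{\tau_0}$ with $F^{2d}(x) = F^f(x) \not\equiv 0 \pmod{\pi}$.

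The last step is to deduce that the generic fibre of the resulting NNP deformation is no longer bi-infinitesimal. This is done exactly as in the last lemma of subsection \ref{thrAL} in the (AL) case: using the element $x$ above, iteratively construct $\mathcal{O}$-linear, skew-symmetric, square-zero endomorphisms $N$ to force the family $(x, F^f(x), \dots, F^{(r-1)f}(x))$ to be linearly independent modulo $\pi$ up to $r = \dim M^0_{\tau_0}$, at which point $F^f$ is not topologically nilpotent on $M^0_{\tau_0}$ and a non-trivial \'etale factor appears. Via the dictionary of \ref{propNNP} and the subsequent proposition, this translates to a splitting $D = D^{a-et} \times D^{bi} \times D^{a-mult}$ of the corresponding parallel deformation of $M$, with $D^{bi}$ of strictly smaller height than $G^{00}$. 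Induction on $\Ht(G^{00})$ concludes the construction of the required chain of generisations, the last of which is $\mu$-ordinary in generic fibre. The main obstacle is the bookkeeping between the NNP-display side (on which the lemmas are formulated) and the honest polarised $\mathcal{O}$-display side (in which $X^{PR}$ lives), together with verifying that the polarisation-compatibility condition from \ref{Wed328} (skew-symmetry of $N$) is respected at each of the deformation steps above; this is handled by the symmetrisation $N_{\overline{\tau}} = -N_{\tau}^{\ast}$ used in Lemmas \ref{lemmB7} and \ref{lemB9}.
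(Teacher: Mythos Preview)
Your overall architecture matches the paper's: reduce to the generalised Rapoport locus, pass to the NNP crystal $M^0$ via \ref{propNNP}, use Lemmas \ref{lemmB7}--\ref{lemB9} to obtain $x\in M^0_{\tau_0}$ with $(F^0)^{2d}(x)\not\equiv 0\pmod\pi$, then produce one more deformation killing bi-infinitesimality, and induct on $\Ht(G^{00})$.

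The gap is in your final step. You propose to run the (AL)-style iteration of subsection \ref{thrAL} verbatim, claiming the polarisation constraint is ``handled by the symmetrisation $N_{\overline\tau}=-N_\tau^*$''. But that symmetrisation is exactly what breaks the clean (AL) computation. With $f=2d$, the path from $M_{\tau_0}$ back to $M_{\tau_0}$ under $F^{2d}$ passes through $M_{\overline{\tau_0}}$ at the $d$-th step, and the forced $N_{\overline{\tau_0}}$ acts on $(F^0)^d(x)$. A direct check shows $N_{\overline{\tau_0}}((F^0)^d(x))\equiv 0$ if and only if $h(x,(F^0)^d(x))\equiv 0\pmod\pi$. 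Without this, the Pascal-triangle identity $F_N^{i\cdot 2d}(x)=\sum_j u^j F^{(i-j)\cdot 2d}(x)$ from the (AL) case acquires uncontrolled extra terms and the conclusion $F_N^{r_0\cdot 2d}(x)\not\equiv 0$ no longer follows. (In Lemmas \ref{lemmB7} and \ref{lemB9} this interference does not arise because there one only computes $F_N^r$ for $r\le 2d$ and the cross-term is either absent or is itself the desired leading term; here you need arbitrarily many full turns.)

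The paper supplies the missing ingredient: it first replaces $x$ by $x+V^0 m$ for a suitable $m$---using that $F^0_{\tau_0}V^0_{\tau_0}\equiv 0\pmod\pi$, which holds precisely because $a_{\tau_0}+a_{\overline{\tau_0}}<e$---so as to force $h(x,(F^0)^d(x))\equiv 0\pmod\pi$. It then passes to a deformation sequence $m_0,\dots,(F^0)^{2d-1}(m_0)$ for which the span of $m_0,(F^0)^d(m_0),(F^0)^{2d}(m_0)$ is totally isotropic; this isotropy is exactly what permits building a skew-symmetric, square-zero, $\mathcal O$-linear $N$ with $N((F^0)^{2d}(m_0))=m_0$ and a clean computation $(F_N^0)^{2d}(m_0)\equiv Xm_0\pmod{\Ker(F\bmod\pi)}$, following Wedhorn. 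Your sketch omits this isotropy adjustment, which is the genuine new difficulty separating (AU) from (AL).
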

 
 \begin{proof}
 By theorem \ref{thrRap}, we can suppose that we have constructed $x_1$ and $x_1 \otimes k((X))$ is in the Pappas-Rapoport locus. We will proceed by induction on the number of slopes of the bi-infinitesimal part of $x_i$ already constructed. If $x_i$ has no or only one slope for its Newton polygon, we are done as it is $\mu$-ordinary. Otherwise we can always assume that $G_{x_i}$ is split,
 \[ G_{x_i} = G_{x_i}^1 \times G_{x_i}^{00} \times G_{x_i}^2,\]
 with Hodge and Newton polygons of $G_{x_1}^1$ being equal, $G_{x_i}^2 = (G_{x_i}^1)^D$ and the first slopes of the Newton and Hodge polygons of $G_{x_i}^{00}$  differ.
 By the previous results, we can moreover suppose that there exists $r > 0$ and we have constructed $x_1,\dots,x_r$ such that $M^0 := M(G_{x_r}^{00})^0$ satisfies the conclusion of lemma \ref{lemB9}, and denote $x \in M^0_{\tau_0}$ such that $(F^0)^{2d}(x) \not\equiv 0 \pmod \pi$. 
 We can always suppose that
 \begin{equation}\label{eqpol}
 h (x, (F^0)^d(x)) \equiv 0 \pmod \pi).\end{equation}
 Indeed, if it is not the case, then for all $m \in M_{\sigma\tau_0}$,
 \[ h(x + V^0m,(F^0)^d(x)) = h(x,(F^0)^d(x)) + h(V^0m,(F^0)^d(x)) = h(x,(F^0)^d(x)) +h(m,(F^0)^{d+1}(x))^{\sigma^{-1}}.\]
 As $(F^0)^{d+1}(x) \neq 0 \pmod \pi$, there exists $m$ such that the previous expression vanishes. Replacing $x$ by $x+V^0m$, which doesn't change $F^0(x)$ as $F_{\tau_0}^0V_{\tau_0}^0 \equiv 0 \pmod \pi$\footnote{here we use that $a_{\tau_0} + a_{\overline{\tau_0}} < e$}, we are done.
 Denote $s$ the maximal integer such that $(F^0)^s(x) \not\equiv 0 \pmod \pi$, and write 
 \[ s = 2dq + j, \quad 0 \leq j < 2d.\]
 Set $m_0 = F^{2(q-1)d + j}(x)$. Then the sequence $(m_0,F^0(m_0),\dots, (F^0)^{2d-1}(m_0))$ is a deformation sequence. 
 Moreover, 
 we have by applying $(F^0)^{2(q-1)d+j+i}$ to (\ref{eqpol}), and by semilinearity of $h$ on the left,
 \[  h((F^0)^i(m_0),(F^0)^{i+d}(m_0)) = 0 \quad \forall i  \in\{0,\dots,d\}.\]
We can now follow \cite{Wed1} to construct $N$ and thus a deformation of $M^0$ which is not infinitesimal anymore. As $h(F^{2d}(m_0),F^d(m_0)) = 0$
 the subspace $\Vect(F^{2d}(m_0),m_0,F^d(m_0)) \pmod \pi \subset M_{\sigma^j\tau}/\pi \oplus M_{\sigma^j\overline\tau}/\pi$ is totally isotropic. We can thus find $U$ a totally isotropic complement, $U = U_{\sigma^j\tau}\oplus U_{\sigma^j\overline\tau}$, such that $h( . ,. )$ induces a perfect pairing between 
\[ M_0 := \Vect(F^{2d}(m_0),m_0,F^d(m_0)) \pmod \pi \quad \text{and} \quad U.\]
We can then set $N(F^{2d}(m_0)) = m_0, Nm_0 = NF^d(m_0) = 0$, and extends uniquely $N$ to $U$ such that $N$ is skew-symmetric. Then $N^2 = 0 \pmod \pi$ and we can extend $N$ by zero on $(M_0 \oplus U)^\perp$ and lift to $M$ so that $N^2 = 0$, and is still skew symmetric and $\mathcal O$-linear.
Then we can calculate that for the deformation $P_N$, 
\[ (F_N^0)^{2d}(m_0\otimes 1) = F_N^0 ((F^0)^{2d-1}(m_0) \otimes 1) = (F^0)^{2d}(m_0)\otimes 1 + m_0\otimes X \equiv Xm_0\pmod{\Ker(F \pmod\pi)}.\]
In particular $F_N^0$ is not nilpotent, and thus $N^0 = \mathcal P_N^0 \otimes k((X))^{perf}$ is not bi-infinitesimal. The $p$-divisible group associated to $P_N$ gives a $k[[X]]$-point $x_{r+1}$ of $X^{PR}$ such that $G_{x_{r+1}} \otimes_{k[[X]]} k = G_{x_r}\otimes_{k_r[[X]]} k$ and $G_{x_{r+1}} \otimes k((X))$ is split,
\[ G_{x_{r+1}} = G_{x_{r+1}}^1 \times G_{x_{r+1}}^{00} \times G_{x_{r+1}}^{1,D},\]
with $G_{x_{r+1}}^{00}$ having height less than $G_{x_i}^{00}$. By induction on this height, we get the result.
 \end{proof}

\begin{cor}
In case (AU), the $\mu$-ordinary locus $X^{PR}_{\mu-ord}$ is Zariski dense.
\end{cor}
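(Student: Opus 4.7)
The plan is to derive the corollary directly from the proposition immediately preceding it, which does essentially all the work. That proposition produces, starting from any point $x \in X^{PR}(k)$, a finite chain of generizations $x_0 = x, x_1, \dots, x_n$ where each $x_i$ is a $k_i[[X]]$-point of $X^{PR}$ (with $k_i$ a perfect extension of $k_{i-1}((X))$), such that $x_i \pmod{X}$ recovers $x_{i-1}$ after base change and the generic fiber $x_n \otimes k_n((X))$ is $\mu$-ordinary.

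So the argument I would give is topological. Recall that by Proposition~\ref{prop222} and Propositions~\ref{prop218}--\ref{prop221}, the $\mu$-ordinary locus $X^{\nu=PR}$ is open in $X_\kappa$. To show density, it is enough to show that every closed point $x$ of $X_\kappa$ lies in the closure of $X^{\nu=PR}$. Given such an $x$, pick an algebraically closed field $k$ containing the residue field of $x$ and view $x \in X^{PR}(k)$ (note: $X^{PR}$ coincides with $X_\kappa$ as a set, since every point has some Hodge polygon, and we have reduced via \cref{thrRap} to points in the generalised Rapoport locus, which is open and dense). Apply the previous proposition to obtain the chain $x_0 = x, x_1, \ldots, x_n$. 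Each $x_i : \Spec k_i[[X]] \to X$ is a morphism whose special point specializes $x_{i+1}$'s special point (after the base changes involved), and whose generic fiber lies in the orbit of $x_{i+1}$. Therefore, by the going-up of specialization along these formal arcs, $x$ lies in the closure of the point $x_n \otimes k_n((X))$, which by construction is $\mu$-ordinary, i.e.\ lies in $X^{\nu=PR}$.

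Concretely, each map $\Spec k_i[[X]] \to X_\kappa$ is continuous: the image of the generic point $\eta_i = \Spec k_i((X))$ specializes to the image of the closed point, so the closed point image lies in $\overline{\{x_n \otimes k_n((X))\}} \subset \overline{X^{\nu=PR}}$ by induction on $n-i$. Chaining these specializations from $i = n$ down to $i = 0$ shows $x \in \overline{X^{\nu=PR}}$. Since $x$ was an arbitrary closed point, the open subscheme $X^{\nu=PR}$ is Zariski dense in $X_\kappa$.

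The only subtle point — really a bookkeeping issue, not a mathematical obstacle — is ensuring that the specializations really do witness a specialization in the scheme $X_\kappa$ (as opposed to merely in the formal completion). This is already addressed in the proof of \cref{thrRap}: because $X_\kappa$ is a scheme of finite type, a formal arc $\Spf k[[X]] \to X_\kappa$ algebraizes to $\Spec k[[X]] \to X_\kappa$, and then the generic point of the source maps into the $\mu$-ordinary locus while its closure in $X_\kappa$ contains the image of the closed point. Together with the inductive chaining of the $x_i$, this completes the proof, and no new computation beyond the proposition is required.
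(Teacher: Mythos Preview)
Your proposal is correct and is precisely the routine topological argument the paper leaves implicit by stating the result as a corollary with no proof: the preceding proposition supplies the chain of generizations, and you are simply spelling out that chaining the algebraized arcs $\Spec k_i[[X]] \to X_\kappa$ shows every point lies in the closure of the open $\mu$-ordinary locus. There is nothing to add or correct.
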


\subsection{Case C}

In this case, $M = \oplus_{\tau} M_\tau$ and \[ h : M \times M \fleche W(k) \otimes \Diff^{-1}_F,\] is $\mathcal O_F$-linear and alternating. 
Let $x_G \in X(k)$ a point corresponding to a group $G$ (with $k$ perfect). We suppose that $x_G$ is in the Rapoport locus (by \ref{thrRap}, as in this case Rapoport and generalised Rapoport loci coincide).

\begin{lemm}
If $G$ is bi-infinitesimal and in the generalized Rapoport locus, there exists a deformation sequence (as in definition \ref{defseq}).
\end{lemm}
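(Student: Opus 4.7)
The proof breaks naturally into $\sigma$-orbits on $\mathcal T$, so I would fix one orbit $\mathcal O$ of size $f_\mathcal O$ and construct $(x_\tau)_{\tau \in \mathcal O}$; doing this for each orbit assembles the desired sequence. The two inputs I would use are immediate from the hypotheses: the generalized Rapoport condition in case (C) forces $\Hdg_\tau = \PR_\tau$ for every $\tau$, hence the first slope of $\Hdg_\tau$ is zero; in Dieudonn\'e terms, $\omega_\tau = \ker(\bar F|_{\bar M_\tau})$ is a free $k[\pi]/\pi^e$-module of rank $g$, so $\bar F : \bar M_\tau \to \bar M_{\sigma\tau}$ is nonzero (of $k$-rank $eg$) for every $\tau$; and bi-infinitesimality says $\bar F$ is nilpotent on $\bar M$.

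My plan is to use a graded Jordan-type decomposition of the nilpotent $\sigma$-linear operator $\bar F$ acting on $\bar M_\mathcal O = \bigoplus_{\tau \in \mathcal O} \bar M_\tau$. One chooses homogeneous generators $v_j \in \bar M_{\tau(v_j)}$ and cyclic blocks $C_j = \langle v_j, \bar F v_j, \dots, \bar F^{n_j-1} v_j\rangle_k$ with $\bar F^{n_j} v_j = 0$, decomposing $\bar M_\mathcal O$. The $j$-th block covers the positions $\tau(v_j), \sigma\tau(v_j), \dots, \sigma^{n_j-1}\tau(v_j)$ in $\mathcal O$; I call $\tau(v_j), \dots, \sigma^{n_j-2}\tau(v_j)$ its \emph{usable portion}, i.e.\ the positions where $\bar F \cdot \bar F^k v_j \neq 0$. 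By the Rapoport input, for every $\tau \in \mathcal O$ at least one block has a usable slot at $\tau$ (otherwise every $y \in \bar M_\tau$ would be a combination only of block-last elements, forcing $\bar F|_{\bar M_\tau} = 0$).

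The construction is then to tile $\mathcal O$ by contiguous arcs, each arc being a suffix $\sigma^a\tau(v_j), \dots, \sigma^{n_j-2}\tau(v_j)$ of some block's usable portion, and to declare $x_{\sigma^k\tau(v_j)} := \bar F^k v_j$ on that arc. Conditions (1) and (2) are then immediate, and (3) holds within an arc by construction since $x_{\sigma\tau} = \bar F x_\tau$ there. The crucial point is that at the \emph{seam} between two consecutive arcs, the last element $\bar F^{n_j-2}v_j$ of the previous arc satisfies $\bar F^2(\bar F^{n_j-2}v_j) = \bar F^{n_j}v_j = 0$, so condition (3) is vacuous at that position and the next arc's seed can be chosen independently.

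The only genuine obstacle is the cyclic consistency of the tiling around $\mathcal O$: one must avoid an arc overflowing past the starting point and re-assigning an already-defined $x_\tau$. I would handle this by a greedy construction: start at any $\tau_0 \in \mathcal O$, pick a block $C_j$ whose usable portion contains $\tau_0$, follow it to its break at $\sigma^{n_j-2}\tau(v_j)$, jump to the next position, and iterate. If the greedy choice would overflow past $\tau_0$ on the last arc, replace it by a shorter block through the current position; such a shorter block exists because Rapoport furnishes a usable slot at every position, and nilpotence of $\bar F$ caps the length of any single block by $\dim_k \bar M_\mathcal O$. Iterating finitely many times covers $\mathcal O$ without conflict. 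The polarization in case (C) plays no role beyond the Dieudonn\'e formalism; the argument is essentially combinatorial once the Jordan decomposition is in hand.
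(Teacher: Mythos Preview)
Your approach is in the same spirit as the paper's (which simply invokes Wedhorn, Prop.~4.1.4): reduce to the nilpotent $\sigma$-linear map $\bar F$ on $\bar M=M/\pi M$, use that $\bar F|_{\bar M_\tau}\neq 0$ for every $\tau$, and tile the $\sigma$-cycle by ``arcs'' along which $x_{\sigma\tau}=\bar F x_\tau$, with seams where $w(x_\tau)=1$. The Jordan decomposition you invoke does exist (graded nilpotent modules over the twisted polynomial ring $k\{F\}$ split into homogeneous cyclic pieces), and your reading of the arc lengths from block offsets is correct. One small slip: $\bar M_\tau$ has $k$-dimension $2g$, not $eg$; the Rapoport input only gives that $\bar F|_{\bar M_\tau}$ is nonzero, which is all you use.

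The genuine gap is the closing-up step. Your sentence ``replace it by a shorter block through the current position; such a shorter block exists because Rapoport furnishes a usable slot at every position'' is a non sequitur: Rapoport gives \emph{some} usable slot at $\tau$, not one of the required shorter length. Concretely, take $f=2$ and a single Jordan block of length $4$ with generator $v\in\bar M_{\tau_0}$. Then at $\tau_0$ the available arc lengths are $\{3,1\}$ and at $\tau_1$ only $\{2\}$. Your greedy procedure starting at $\tau_0$ tries length $3$ (overflows), falls back to length $1$, then at $\tau_1$ is forced to length $2$, which again overflows, and there is no shorter arc at $\tau_1$. The correct deformation sequence here is the single arc of length $2$ starting at $\tau_1$, namely $x_{\tau_1}=\bar Fv$, $x_{\tau_0}=\bar F^2v$.

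The fix is easy and is essentially Wedhorn's argument: choose $\tau_0$ and $x_0\in\bar M_{\tau_0}$ with $w(x_0)=r:=\max_\tau m(\tau)$, then run your greedy construction. Each later arc has length $\le r$, so if the last arc overshoots $\tau_0$ by $\delta$, one has $0<\delta<r_{k-1}\le r$. Now shorten the \emph{first} arc rather than the last: replace its seed $x_0$ by $\bar F^{\delta}x_0$, which has $w=r-\delta\ge 1$ and lies at $\sigma^{\delta}\tau_0$. The total length becomes exactly $f$ and all seam conditions hold. With this adjustment your argument goes through.
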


\begin{proof}
As $G$ is bi-infinitesimal, denote for $x \in M$, $w(x) = \sup \{ n | F^n(x) \not\equiv 0 \pmod{\pi}\}$. As $G$ is in the generalized Rapoport locus (and in case $C$ we have $d_\tau = \frac{h}{2}$, 
for all $\tau$), 
we have for all $\tau$, a $x_\tau \in M_\tau$ such that $w(x_\tau) \geq 1$.
Then, this is proved exactly as in \cite{Wed1}, Proposition 4.1.4.
\end{proof}

\begin{lemm}
There exists a deformation $G'$ of $G$ such that $G'$ is not bi-infinitesimal.
\end{lemm}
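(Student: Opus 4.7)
The plan is to mimic the construction of cases (AL) and (AU), using the deformation sequence from the preceding lemma together with Proposition~\ref{Wed328}, and then to iterate. Pick $x \in M_{\tau_0}$ for which $w(x) = r$ is maximal; since $G$ is bi-infinitesimal this integer is finite. Set $y := F^r(x) \in M_{\sigma^r \tau_0}$ and $z := x_{\sigma^r \tau_0}$. By construction $F(y) \equiv 0 \pmod \pi$ while $F(z) \not\equiv 0 \pmod \pi$, so $y$ and $z$ are linearly independent modulo~$\pi$ in $M_{\sigma^r\tau_0}$.

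We next construct an endomorphism $N$ of $M$ satisfying the hypotheses of Proposition~\ref{Wed328}---$\mathcal{O}$-linear, $N^2 = 0$, and skew-symmetric with respect to the alternating pairing $h$---whose reduction on $M_{\sigma^r\tau_0}/\pi$ sends $y \mapsto z$ and $z \mapsto 0$, and which vanishes on every other $\tau$-component. In case (C) the polarisation decomposes as $h = \bigoplus_\tau h_\tau$ where each $h_\tau$ is a symplectic form on $M_\tau$, and the skew-symmetry of $N$ becomes the requirement that $N_{\sigma^r\tau_0}$ lie in the symplectic Lie algebra $\mathfrak{sp}(M_{\sigma^r\tau_0}, h_{\sigma^r\tau_0})$. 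One completes $(y,z)$ to a symplectic basis of $M_{\sigma^r\tau_0}/\pi$, distinguishing the cases $h_{\sigma^r\tau_0}(y,z) \not\equiv 0$ and $h_{\sigma^r\tau_0}(y,z) \equiv 0 \pmod \pi$ (the latter is handled by modifying $z$ within a suitable coset of $\ker(F \bmod \pi)$), and defines $N_{\sigma^r\tau_0}$ by a $2 \times 2$ skew-symplectic block; a Teichm\"uller lift produces a well-defined $N$ on $M$ with $N^2 = 0$.

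Feeding $N$ into Proposition~\ref{Wed328} yields a deformation $(P, \iota, \lambda)$ of the Dieudonn\'e module of $G$ over $k[[t]]$. The same computation as in the (AU) case gives
\[ F_N^{r+1}(x) \equiv [t]^p \, F(z) \pmod{\pi}, \]
which is nonzero. Hence the generic fibre $G_1 := \mathcal{BT'}(P \otimes_{k[[t]]} k((t))^{perf})$ has maximal weight strictly greater than $r$. As in the previous cases, the Pappas-Rapoport filtration lifts canonically along $\Spec(k[[t]]) \to X_\kappa$ since we stay within the open Rapoport locus. Iterating this construction, after a bounded number of steps (at most $\dim_k M_{\tau_0}$) the maximal weight exceeds the rank of $M_{\tau_0}$, which forces $F$ to fail to be topologically nilpotent modulo $\pi$; the resulting $G'$ is then not bi-infinitesimal.

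The main obstacle is the construction of $N$ inside the symplectic Lie algebra, as opposed to placing the non-trivial block of $N$ between two distinct conjugate components $M_\tau$ and $M_{\overline\tau}$ as in cases (AL) and (AU): the self-duality $\tau = \overline\tau$ in case~(C) imposes a genuine $\mathfrak{sp}$-constraint on each local block of $N$, and one must check by elementary symplectic linear algebra that the required rank-one bump $y \mapsto z$ can be realised inside $\mathfrak{sp}(M_{\sigma^r\tau_0}, h_{\sigma^r\tau_0})$ in both the isotropic and the non-isotropic configurations of the pair $(y,z)$.
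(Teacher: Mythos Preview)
Your iterative strategy differs from the paper's one-shot argument: the paper uses the \emph{entire} deformation sequence $(x_\tau)_\tau$ at once, defining $N$ by $N(Fx_\tau)=x_{\sigma\tau}$ whenever $Fx_\tau\neq x_{\sigma\tau}$ (and zero otherwise), so that $F_N^f(x_{\tau_0})$ already has $x_{\tau_0}$ as a summand and $G'$ is non-bi-infinitesimal after a \emph{single} deformation. This is exactly Wedhorn's Proposition~4.4.3, and the point of the $\mathcal O_F$-linearity of $h$ in case~(C) is that Wedhorn's symplectic construction goes through verbatim on each $M_\tau$. Your approach instead bumps $w(x)$ by one at each stage, which is closer in spirit to the paper's treatment of case~(AL).

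There is, however, a genuine gap in your handling of the isotropic case $h(y,z)\equiv 0$. You propose to replace $z$ by $z+w$ with $w\in\ker(\overline F)$; but in case~(C), on $M_{\sigma^r\tau_0}/\pi$ one has $(\ker\overline F)^\perp=\operatorname{Im}\overline V$, and in the Rapoport locus both spaces have dimension $g$ while $\operatorname{Im}\overline V\subset\ker\overline F$ (as $FV\equiv 0\bmod\pi$), so $\ker\overline F$ is \emph{Lagrangian}. Since $y\in\ker\overline F$, one has $h(y,w)=0$ for every $w\in\ker\overline F$, and your modification never moves $h(y,z)$ off zero. The fix is easy but different: choose $z$ with $h(y,z)\neq 0$ from the outset. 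This is always possible because $\ker\overline F\subset y^\perp$ (Lagrangian containing $y$), so any $z\notin y^\perp$ automatically satisfies $F(z)\neq 0$. Alternatively, one can build $N\in\mathfrak{sp}$ directly with $Ny=z$, $Nz=0$, $N^2=0$ even when $h(y,z)=0$, at the cost of a $4\times4$ block (e.g.\ $N(z^*)=-y^*$ in a symplectic basis $(y,z,y^*,z^*)$); the $[t]^p$-coefficient of $F_N^{r+1}(x)$ is still $F(z)$.

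Two smaller points: your displayed formula $F_N^{r+1}(x)\equiv [t]^p F(z)$ is only literally correct for $r<f$; for $r\geq f$ there are further terms, but they carry strictly higher powers of $[t]$ (namely $[t]^{p^{f+1}},\dots$), so the $[t]^p$-coefficient $F(z)$ survives and the conclusion stands. And the termination bound should be $f\cdot\dim_k(M_{\tau_0}/\pi)=2gf$ rather than $\dim_k M_{\tau_0}$, since non-nilpotence of $\overline F$ is detected by $F^{2gf}\neq 0$.
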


\begin{proof}
If $G$ is not bi-infinitesimal, any deformation will do. Otherwise, let $(x_\tau)_\tau$ be the deformation sequence given by the previous lemma.
We can construct a deformation endomorphism $N$ such that $NFx_\tau = 0$ if $Fx_\tau = x_{\sigma\tau}$ and $NFx_\tau = x_{\sigma\tau}$ otherwise.
Indeed, as $h$ is $\mathcal O_F$-linear, this is done exactly as in \cite{Wed1} Proposition 4.4.3. The calculation of $F_N$ using this deformation shows that $G'$, the defomation of $G$ associated to $N$, is not bi-infinitesimal.
%
%
%
%
\end{proof}

\begin{prop}
In case (C), the ordinary locus is dense.
\end{prop}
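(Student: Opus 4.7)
The plan is to follow the same strategy as in cases (AL) and (AU): reduce inductively to the bi-infinitesimal case via Hodge--Newton decomposition, then combine the two preceding lemmas with Proposition~\ref{Wed328} to generise to a $\mu$-ordinary point. Since case (C) is not case (AR), Theorem~\ref{thrRap} applies, so we may start from $x_G \in X^{PR}(k)$ already lying in the (generalized) Rapoport locus. Moreover we only need to exhibit, for any such $x_G$, a generisation whose bi-infinitesimal part has strictly smaller height; iterating finitely many times lands us in the $\mu$-ordinary locus.

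Concretely, I would apply the Hodge--Newton decomposition (\cite{BH} Théorème 1.3.2) in its polarised version to split
\[ G = G^1 \times G^{00} \times (G^1)^D,\]
where $G^1$ is the maximal summand whose Hodge and Newton polygons coincide, $(G^1)^D$ is its Cartier dual (exchanged with $G^1$ by the polarisation), and $G^{00}$ is bi-infinitesimal, polarised, $\mathcal O_F$-stable, and still in the Rapoport locus. If $G^{00} = 0$ then $G$ is $\mu$-ordinary and there is nothing to do. Otherwise the first lemma produces a deformation sequence for $G^{00}$, and the second lemma upgrades it to a polarisation-compatible, $\mathcal O_F$-linear nilpotent endomorphism $N$ with $N^2 = 0$ of its Dieudonn\'e module such that the deformation $\mathcal P_N$ of $G^{00}$ given by Proposition~\ref{Wed328} has generic fiber $G'$ that is no longer bi-infinitesimal.

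Combining $\mathcal P_N$ with the trivial deformation of $G^1 \times (G^1)^D$ yields a $k[[t]]$-valued deformation of $G$ with its $\mathcal O_B$-action and polarisation; its Pappas-Rapoport datum lifts uniquely (by the argument used in the proof of Theorem~\ref{thrRap}, since the Hodge polygon is lower semi-continuous and is already minimal at the special fiber, the entire generisation stays in the Rapoport locus and the filtration lifts canonically). By Serre--Tate this algebraises to a map $\Spec k[[t]] \to X^{PR}$ generising $x_G$. The generic fiber $G'_{\mathrm{gen}}$ now has a non-trivial étale--multiplicative part, and by the symplectic symmetry this part is of even height, so the bi-infinitesimal summand of $G'_{\mathrm{gen}}$ has height strictly less than that of $G^{00}$. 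Inducting on the height of the bi-infinitesimal summand yields in finitely many steps a generisation of $x_G$ lying in the $\mu$-ordinary locus, which in case (C) is the ordinary locus.

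The main obstacle is the compatibility of the deformation $N$ with both the alternating pairing and the $\mathcal O_F$-action simultaneously — precisely the content of the two preceding lemmas (which appeal to \cite{Wed1} Propositions 4.1.4 and 4.4.3). Once $N$ is produced, the rest is formal: Proposition~\ref{Wed328} handles the lift of crystal plus polarisation plus $\mathcal O$-action, Theorem~\ref{thrRap} ensures the Pappas-Rapoport datum travels with the deformation in the Rapoport locus, and the Hodge--Newton decomposition makes the induction on height well-founded.
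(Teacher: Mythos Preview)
Your proposal is correct and follows essentially the same approach as the paper: reduce to the Rapoport locus via Theorem~\ref{thrRap}, split off the étale and multiplicative parts, deform the bi-infinitesimal piece using the two preceding lemmas and Proposition~\ref{Wed328}, and induct on its height. The paper phrases the decomposition as $G = G^{m} \times G^{00} \times G^{et}$ rather than your Hodge--Newton $G = G^1 \times G^{00} \times (G^1)^D$, but in case~(C) the PR polygon has only slopes $0$ and $1$, so these coincide.
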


\begin{proof}
By \cref{thrRap}, it suffices to prove that each $x \in X^{PR}(k)$, $k$ a perfect field, can be deformed into a ordinary $p$-divisible group.
We can split
\[ G = G^{m} \times G^{00} \times G^{et},\]
where $G^{00}$ is bi-infinitesimal. We will argue on the height of $G^{00}$. If $G^{00}$ is trivial, then $G$ is ordinary and we are done.
Otherwise by the previous lemma, there is a deformation $H$ of $G^{00}$ which is not bi-infinitesimal, thus $\widetilde G = G^m \times H \times G^{et}$ is a deformation of $G$ and
\[ \Ht_{\mathcal O} \widetilde G^{00} = \Ht_{\mathcal O} H^{00} < \Ht_{\mathcal O} G^{00}.\]
Thus by induction, there exists a deformation of $G$ which is ordinary.
\end{proof}

\subsection{Ordinary Locus}

\begin{defin}
A $p$-divisible group over a base of characteristic $p$ is said to be ordinary if it is an extension of an etale group by a multiplicative one. Equivalently, it is ordinary if its Hasse invariant is invertible. Denote by $X^{ord}$ the (open) subset of $X_\kappa$ of ordinary $p$-divisible group. 
\end{defin}

\begin{prop}
We have the following properties;
\begin{enumerate}
\item If the ordinary locus is non-empty, it is equal to the $\mu$-ordinary locus and is thus dense.
\item The ordinary locus is non-empty if and only if $(d_{\tau}^{[i]})$ is constant for all $\tau,i$.
\item The ordinary locus is non-empty if and only if the local reflex field $E$ is equal to $\QQ_p$.
\end{enumerate}
\end{prop}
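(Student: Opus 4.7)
For (1), the plan is to argue that an ordinary $p$-divisible group has Newton polygon supported only at slopes $0$ and $1$. Since $\Newt \geq \Hdg \geq \PR$ with identical endpoints and all three polygons have slopes in $[0,1]$, a convexity argument shows that a polygon with slopes in $[0,1]$ lying below one with slopes in $\{0,1\}$ and sharing its endpoints must coincide with it. Hence at any ordinary point $\Newt = \Hdg = \PR$, so the point is $\mu$-ordinary. Conversely, if the ordinary locus is non-empty, then $\PR$ itself has only slopes $0$ and $1$, so every $\mu$-ordinary point (satisfying $\Newt = \PR$) is ordinary. The density of the ordinary locus then follows from Theorem~\ref{thr41}.

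For (2), what remains is to identify when $\PR$ has only slopes $0$ and $1$. Using the description of $\PR_\tau$ from \cite{BH}, one checks that $\PR_\tau$ has only slopes $0$ and $1$ precisely when $(d_\tau^{[i]})_i$ is constant in $i$; in that case, writing $d_\tau$ for the common value, $\PR_\tau$ is the ``ordinary'' polygon with slope $1$ of multiplicity $d_\tau$ and slope $0$ of multiplicity $h - d_\tau$. Since $\PR$ is the arithmetic mean of the $\PR_\tau$ over $\tau$, and averaging slope-$\{0,1\}$ polygons whose break-points differ necessarily produces strictly intermediate slopes, $\PR$ has only slopes in $\{0,1\}$ if and only if all $\PR_\tau$ coincide as the same such polygon, which forces $d_\tau^{[i]}$ to be constant in both indices. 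Combined with (1), this yields (2).

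For (3), the local reflex field $E_\nu$ is, as recalled in Section~\ref{sect2.2}, the fixed field in $\overline{\QQ_p}$ of the stabilizer of the tuple $(d_{i,\tau'})_{\tau' \in \Hom(F_i,\overline{\QQ_p})}$ under the Galois action on the embeddings. Thus $E_\nu = \QQ_p$ if and only if, for each $i$, the integer $d_{i,\tau'}$ is independent of $\tau'$. Via the bijection between $\Hom(F_i,\overline{\QQ_p})$ and $\mathcal T_i \times \{1,\ldots,e_i\}$ fixed in Section~\ref{sect24}, this translates precisely to ``$d_{i,\tau}^{[j]}$ is constant in $(\tau,j)$'', i.e., the criterion of (2); hence (3) follows from (2).

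The principal obstacle is the combinatorial analysis in (2), namely verifying the exact shape of $\PR_\tau$ as a function of $(d_\tau^{[i]})_i$ from the definition in \cite{BH} and checking rigorously that the average of slope-$\{0,1\}$ polygons with differing break-points indeed produces at least one fractional slope. Once this combinatorial input is in place, parts (1) and (3) are essentially formal consequences.
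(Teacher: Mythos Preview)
Your proposal is correct and follows essentially the same approach as the paper's proof. Both arguments hinge on the same chain: an ordinary point has Newton slopes in $\{0,1\}$, the inequality $\Newt \geq \PR$ with common endpoints forces $\PR$ to be the ordinary polygon, and this in turn forces all $d_\tau^{[i]}$ to be equal; the equivalence with $E_\nu = \QQ_p$ then comes from the definition of the reflex field via the Galois action on the embeddings.

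The only minor differences are organizational. For (2), the paper argues directly that ``the breaking points of $\PR$ are at the abscissas $d_\tau^{[i]}$'' and concludes constancy, whereas you pass through each $\PR_\tau$ and then average; your route is slightly more explicit about the combinatorics. For (3), the paper spells out the transitivity of $\Gal(K/K^0)$ on the index $i$ and of $\Gal(K^0/\QQ_p)$ on the embeddings $\tau$ to deduce constancy from $E = \QQ_p$, while you compress this into the statement that $E_\nu = \QQ_p$ is equivalent to $d_{i,\tau'}$ being independent of $\tau'$; the transitivity of the absolute Galois group on $\Hom(F_i,\overline{\QQ_p})$ is what makes your ``thus'' valid, and it would not hurt to say so.
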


\begin{proof}
If $(d_{\tau}^{[i]})$ is constant, say equal to $d$, then the Pappas-Rapoport Polygon has slopes 0 ($d$-times) and 1 ($(h-d)$ times). In particular,
\[ X^{\mu-ord} = X^{ord}.\]
If the ordinary locus is non-empty, then a point $x$ corresponding to an ordinary $p$-divisible group and has a Newton polygon with slopes only $0$ and $1$, and same ending point as $\PR$, thus as $\Newt(x) \geq \PR$, this means that $\PR(d_{\tau}^{[i]})$ has only slopes $0$ and $1$, and thus (as the breaking points are at the abscissa $d_{\tau}^{[i]}$) the collection $(d_{\tau}^{[i]})$ is constant. This proves 1. and 2.
$E$ is the (finite) extension of $\QQ_p$, inside $K$, fixing the collection $(d_\tau^{[i]})$. Thus, if the ordinary locus is non empty, $E = \QQ_p$. 
For every $\sigma \in \Gal(K/K^0)$, $\sigma \cdot d_\tau^{[i]} = d_\tau^{[\sigma\cdot i]}$ where $i$ correspond to a conjugate $\pi_i$, of $\pi$, and $\pi_{\sigma \cdot i} = \sigma(\pi_i)$. Thus $\Gal(K/K^0)$ is transitive on the collection $(d_\tau^{[i]})_i$. Thus if $E = \QQ_p$, $d_\tau^{[i]} = d_\tau$ for all $i$. But $\Gal(K^0/\QQ_p)$ is transitive on the set $\mathcal T$, and thus if $E = \QQ_p$, $d_\tau^{[i]} = d$ for all $\tau,i$.
Another way to say it is that, using the characteristic zero description $d_{i,\tau'}$, $\tau' \in Hom(K,\overline \QQ_p)$, $\Gal(K/\QQ_p)$ acts transitively on $Hom(K,\overline \QQ_p)$.
\end{proof}


\appendix

\section{A specific exemple in case (AR)}
\label{AppE}
In this section we give explicite calculations for the local rings of the Pappas-Rapoport model for $U(1,1)$ and $U(2,1)$ and a quadratic extension which $p \neq 2$ is ramified. This setting has been studied (in slightly greater generality) in \cite{Kramer}. Thus we fix $p \neq 2$, $F/\QQ_p$ a ramified extension of degree 2, with uniformiser $\pi$ and denote $\overline \pi = s(\pi)$ its conjugate.

\subsection{The case of U(1,1)}

For $U(1,1)$ the moduli problem $\mathcal{PRZ}$ (local analog of the definition of $X$ \ref{defPRmoduli}) with values in a $\mathcal O_F$-scheme $S$ is given by 
\begin{itemize}
\item $G$ a $p$-divisible $\mathcal O_F$-module over $S$ of $\mathcal O_F$-height 2, dimension 2, and denote $\iota : \mathcal O_F \fleche \End(G)$,
\item a polarisation, i.e. an isomorphism $G^D \simeq G^{(s)}$,
\item a locally direct factor $\omega^{[1]} \subset \omega_G$ of rank 1, such that \[(\iota(\pi)\otimes 1 - 1 \otimes \pi)\omega^{[1]} = \{0\};\]
\[ (\iota(\pi)\otimes 1 - 1 \otimes s(\pi))(\omega_G) \subset \omega^{[1]}.\]
\end{itemize}
In characteristic $p$, we have 
\[ \omega^{[1]} \subset \omega_G \subset \mathcal H^1_{dR}(G) = \mathcal H^1_{dR}(G)[\pi^2].\]
We can thus look at $\pi^{-1}\omega^{[1]}$, which contains $\omega_G$ by hypothesis on $\omega^{[1]}$, and is locally free of rank $3$. Thus
\[ \omega^{[1]'} := (\pi^{-1}\omega^{[1]})^\bot,\]
where $\bot$ denotes the orthogonal with respect to the perfect pairing on $\mathcal H^1_{dR}$ induced by the polarisation, is locally free of rank 1, and inside $\omega_G$.

The associated local model $\mathcal M$ is given by $(\mathcal F^{[1]} \subset \mathcal F)$ in $\Lambda \otimes_{\mathcal O_E} \mathcal O_S = \mathcal O_F^2 \otimes_{\ZZ_p} \mathcal O_S$, endowed with (say) the pairing in the basis 
$\pi e_1,e_1,\pi e_2,e_2$,
\[
J = \left(
\begin{array}{cccc}
  0   & 1 & & \\
   -1 &  0 & & \\
         &  & 0 & 1\\
    &   &-1 & 0\\
\end{array}
\right)
\]
satisfying analogous conditions (see \cite{Kramer} Definition 4.1).
The induced pairing on $\mathcal H^1_{dR}[\pi]$ or $\Lambda/\pi\Lambda$, is given by $\tilde J(\pi e_i,\pi e_j) = J(\pi e_i,e_j)$ and thus by the matrix $I_2$.
To understand locally the moduli space $\mathcal{PRZ} \otimes \overline{\FP}$, we can make the calculation on the local model $\mathcal M$. As $\Lambda/p\Lambda$ is of rank 
4 over $\ZZ_p$, this amounts to understand the possible inclusions $\mathcal F^{[1]} \subset \mathcal F \subset \Lambda \otimes \mathcal O_S$ and their deformations.
We will fix once and for all the basis $\pi e_1,\pi e_2,e_1,e_2$ of $\Lambda$ and identify the points of $\mathcal M \otimes \overline{\FP}$ with 4x2 matrices, first column generating $\mathcal F^{[1]}$ and first two generating $\mathcal F$.

Up to obvious symmetries, a point of $\mathcal M$ is given by
\[
\omega = \left(
\begin{array}{cc}
  1   & 0  \\
   x &  a \\
         &b  \\
    & y  \\
\end{array}
\right),
\]
and as $\pi \mathcal F \subset \mathcal F^{[1]}$, we must have $bx =y$, and $\mathcal F$ is totally isotropic thus $b + xy = 0$, i.e $b(1+x^2) = 0$.
Thus either $b = 0$ and we have 
\[\omega = \left(
\begin{array}{cc}
  1   & 0  \\
   x &  1 \\
         &0  \\
    & 0  \\
\end{array}
\right),
\] which is not in the generalised Rapoport locus (here this is just the Rapoport locus) as $\omega$ is $\pi$-torsion. Or $b \neq 0$ and thus
\[\omega_{PR} = \left(
\begin{array}{cc}
  1   & 0  \\
   x &  a \\
         &1  \\
    & x  \\
\end{array}
\right),
\] 
Thus, $\mathcal M \otimes \overline{\FP}$ is locally given by two lines $L_{b=0}$ and $L_{1+x^2=0} = 0$ intersecting at a point outside of the Rapoport locus,
\[x_0 = \left(
\begin{array}{cc}
  1   & 0  \\
   x &  1 \\
         &0  \\
    & 0  \\
\end{array}
\right),
\] such that $1+x^2 = 0$. Remark that $1+x^2 = 0$ is exactly the condition so that $\mathcal F^{[1]'} = \mathcal F^{[1]}$, i.e. $\mathcal F^{[1]}$ is totally isotropic for the induced pairing on $\Lambda/\pi$. $L_{1+x^2}$ is the closure of the Rapoport locus, and $L_{b=0}$ is completely away from the Rapoport locus. In particular, the (generalised) Rapoport locus is not dense (and thus so is the ($\mu$-)ordinary locus). The local ring at $x_0$ is given by
\[ (\overline{\FP}[A,B,X]/(B(1+X^2)))_{(B,X-x,A-a)}.\]

\subsection{The case of U(2,1)}

The problem is similar, in this case we define $\mathcal M$ parametrizing $\mathcal F^{[1]} \subset \mathcal F \subset \Lambda \otimes \mathcal O_S$ locally direct factor of ranks 2 and 3, $\mathcal F$ being totally isotropic, satisfying analogous assumptions with respect to $\pi$, and $\Lambda = \mathcal O_F^3$ with the pairing given in the basis $(\pi e_1,e_1,\pi e_2,e_2,\pi e_3,e_3)$,
\[
J = \left(
\begin{array}{cccccc}
  0   & 1 & && & \\
   -1 &  0 & & \\
         &  & 0 & 1& &\\
    &   &-1 & 0& &\\
  & &           &  & 0 & 1\\
   & & &   &-1 & 0\\
\end{array}
\right).
\]
Looking at points of $\mathcal M \otimes \overline{\FP}$ as matrices in the basis $(\pi e_1,\pi e_2,\pi e_3,e_1,e_2,e_3)$ we see that, up to obvious symmetries, 
\[
\omega = \left(
\begin{array}{ccc}
  1   &  & 0  \\
   0 & 1  & 0 \\
    x     & y & a  \\
  & &    b  \\
    & &    c  \\
      & &    d  \\
\end{array}
\right),
\]
with $bx + cy = d$ (as $\pi \mathcal F \subset \mathcal F^{[1]}$) and $b + xd = 0$ and $c + yd = 0$ (as $\mathcal F$ is totally isotropic). This amounts to variables $x,y,a,d$ and equation $d(1 + x^2 + y^2)$. Thus as before we have two smooth surfaces (given by $d = 0$ – when $\omega$ is $\pi$-torsion, and $1+x^2+y^2$ when $\mathcal F^{[1]'}$ is totally isotropic for the induced pairing), intersection along a smooth curve (given by $d = 1+x^2+y^2 = 0$). Moreover, for any point $z$ on the curve, the local ring at $z$ is given by
\[ (\overline{\FP}[X,Y,A,D]/(D(1+X^2+Y^2)))_{(D,X-x,Y-y,A-a)}.\]
In this case the surface $S : 1+x^2+y^2 = 0$ contains the generalised Rapoport locus as a dense subset (corresponding to $d \neq 0$) and coincides with its closure, and the other surface is completely disjoint from the generalised Rapoport locus. In particular, Theorem \ref{thr229}, \ref{thrRap} (thus also theorem \ref{thr41} and proposition \ref{prop38}) are false in this example too, as in the previous one.

\bibliographystyle{smfalpha} 
\bibliography{biblio} 

\end{document}